\numberwithin{equation}{section}
\newcommand{\abs}[1]{\left\vert#1\right\vert}
\newcommand{\norm}[1]{\left\Vert#1\right\Vert}
\newcommand{\paren}[1]{\left(#1\right)}
\newcommand{\bracket}[1]{\left[#1\right]}
\newcommand{\set}[1]{\left\{#1\right\}}
\newcommand{\N}{\mathbb{N}}
\newcommand{\R}{\mathbb{R}}
\newcommand{\T}{\mathbb{T}}
\newcommand{\C}{\mathbb{C}}
\newcommand{\moll}{\mathcal{J}_{\varepsilon}}
\newcommand{\ueps}{u_{\varepsilon}}
\newcommand{\vareps}{{\varepsilon}}
\newcommand{\cc}{\mathrm{c.c.}}
\newtheorem{theorem}{Theorem}[section]
\newtheorem{lemma}[theorem]{Lemma}
\newtheorem{cor}[theorem]{Corollary}
\title[Generalized Derivative Nonlinear Schr\"{o}dinger Equations]{Local Existence Theory for Derivative Nonlinear Schr\"{o}dinger Equations with 
Non-Integer Power Nonlinearities}
\author{David M. Ambrose}
\address{Department of Mathematics, Drexel University, Philadelphia, PA 19104, USA}
\thanks{DMA gratefully acknowledges support from the National Science Foundation through grants DMS-1008387 and DMS-1016267.}
\author{Gideon Simpson}
\address{Department of Mathematics, Drexel University, Philadelphia, PA 19104, USA}
\date{\today}
\begin{document}
\maketitle

\begin{abstract} We study a derivative nonlinear Schr\"{o}dinger
  equation, allowing non-integer powers in the nonlinearity,
  $\abs{u}^{2\sigma} u_x$.  Making careful use of the energy method,
  we are able to establish short-time existence of solutions with
  initial data in the energy space, $H^1$.  For more regular initial
  data, we establish not just existence of solutions, but also
  well-posedness of the initial value problem.  These results hold for
  real-valued $\sigma\geq 1,$ while prior existence results in the
  literature require integer-valued $\sigma$ or $\sigma$ sufficiently
  large ($\sigma \geq 5/2$), or use higher-regularity function spaces.
\end{abstract}

\section{Introduction}

We consider the following generalization of the derivative nonlinear
Schr\"odinger equation (DNLS),
\begin{equation}
  \label{e:gdnls}
  iu_{t}+i\abs{u}^{2\sigma}u_{x}+u_{xx} = 0,\quad u:\T\times \R \to \C
\end{equation}
with $\sigma\geq 1$, a real number.  This equation, gDNLS, was
recently studied in \cite{Liu:2013cq,Liu:2013ej} for both the
properties of its solitons and its potential for singularity
formation.

The original derivative nonlinear Schr\"odinger equation, was
written as
\begin{equation}
  \label{e:dnls}
  i v_t + i (\abs{v}^{2} v)_x + v_{xx}=0,
\end{equation}
and appeared as a long-wavelength approximation for Alfv{\'e}n waves
in plasma physics \cite{Sulem:1999kx}.  Under a non-degenerate gauge
transformation, \eqref{e:dnls} becomes \eqref{e:gdnls} with
$\sigma=1$, the cubic case.  With $\sigma=1,$ \eqref{e:gdnls} has also
appeared as a model for self-steepening optical pulses
\cite{Moses:2007vv}.

The cubic version of \eqref{e:gdnls} ({\it i.e.}, the case $\sigma=1$) has
received significant attention, with early well-posedness results due
to Tsutsumi and Fukuda, followed by work by Hayashi and Ozawa, in $H^1$
and higher regularity spaces
\cite{Hayashi:1992wl,Hayashi:1993vj,Hayashi:1994un,Hayashi:1994vp,Ozawa:1996uj,Tsutsumi:1980uy}.
More recently Colliander {\it et al.} and Gr\"unrock and Herr have
treated the problem in $H^s$ spaces with $s<1$
\cite{Colliander:2001ve,Colliander:2002ve,Grunrock:2008go,Herr:2006ev}.

Generalizing to $\sigma>1$, the analysis is incomplete.  In
\cite{Hao:2007ez}, local well-posedness was proven for \eqref{e:gdnls}
in $H^{1/2}$, but only for $\sigma \geq 5/2$.  The theorems of
\cite{Kenig:1993wr,Kenig:1998wr} could be directly applied to problems
with $\sigma \in \N$, but these would also give results in
higher-regularity spaces.  While the paper \cite{Tsutsumi:1980uy} only
treats the cubic problem, it is mentioned there that non-integer
powers in the nonlinearity may be treated by the same method.  This is
left vague, however, as it is stated that the nonlinearity must be
``smooth.''  In any case, the result of \cite{Tsutsumi:1980uy} for the
cubic equation is in the space $H^{1.5+}$ which is more regular than
the energy space.  Therefore, none of these works treat the same
parameter regime as in the present study, since we allow real-valued
$\sigma\geq 1$ and we demonstrate existence in $H^{1},$ the energy
space.

In two recent works, \cite{Liu:2013cq,Liu:2013ej}, \eqref{e:gdnls} was
considered with $1<\sigma<2$.  Simulations presented in these works
used smooth data, on a periodic domain.  To be well justified, it is
thus desirable to have a high regularity well-posedness result, with
solutions in, say, $H^2$.  Additionally, in \cite{Liu:2013cq}, the
stability of the soliton solutions of gDNLS was examined in $H^1$.
The results are conditional on the existence of weak solutions, $u\in
C(0,T;H^1(\R))$, such that
\begin{equation}
  \label{e:weak_soln}
  \frac{d}{dt}\left \langle u(t), \phi \right \rangle = \left \langle
    \frac{\delta \mathcal{H}}{\delta \bar u}, i \phi \right \rangle, \quad \phi \in H^{-1}
\end{equation}
where $\mathcal{H}$ is the Hamiltonian defined below in
\eqref{e:hamiltonian}.  This formulation is motivated by the framework
of Grillakis, Shatah, and Strauss,
\cite{Grillakis:1987hj,Grillakis:1990jv}, whose methods were applied
in \cite{Liu:2013cq}.  Thus, it is of interest to develop an $H^1$ and
higher regularity theory for \eqref{e:gdnls} with more flexibility on
$\sigma$.  For a general initial condition $u_{0}\in H^{1},$ we do not
establish the existence of a solution $u\in C(0,T;H^{1}),$ but instead
we find $u\in L^{\infty}(0,T;H^{1})\cap C(0,T;H^{s}),$ for any $s<1.$
However, by studying more regular initial data, $u_{0}\in H^{2},$ we
find solutions in $u\in C(0,T;H^{2}).$ These solutions are also in
$C(0,T;H^{s})$ for all $s<2,$ and thus are in $C(0,T;H^{1}).$

The motivation for obtaining results in $H^1$ is because this is the
energy space of the problem.  Indeed, \eqref{e:gdnls} formally
conserves
\begin{equation}
  \label{e:hamiltonian}
  \mathcal{H}=\int |u_{x}|^{2}+\frac{1}{(\sigma+1)^{2}}
  \bar{u}^{\sigma+1}D_{x} u^{\sigma+1}\ dx, \quad D_{x}\equiv \frac{1}{i}\partial_{x},
\end{equation}
which is well-defined for $H^1$ functions.  Though we will not make
use of it, \eqref{e:gdnls} can then be written as
\begin{equation}\nonumber
  u_t = -i \frac{\delta \mathcal{H}}{\delta \bar u}.
\end{equation}
The equation also conserves mass
\begin{equation}
  \label{e:mass}
  \mathcal{M} = \int \abs{u}^2 dx,
\end{equation}
and momentum
\begin{equation}
  \label{e:momentum}
  \mathcal{P} = \int - \frac{1}{2} \bar u D_x u.
\end{equation}

Studying the problem on $\T = [0, 2\pi)$, we seek mild solutions $u
\in L^\infty(0,T;H^1)$ such that
\begin{equation}
  \label{e:mild_soln}
  u= e^{i\partial_{x}^{2}t}u_{0}
  -\int_{0}^{t}e^{i\partial_{x}^{2}(t-s)}|u|^{2\sigma}u_{x}\ ds.
\end{equation}
Our first main result is:
\begin{theorem}\label{mainTheorem}
  Let $\sigma\geq1$ be given.  Let $u_{0}\in H^{1}.$ There exists
  $T>0$ and $u\in L^\infty(0,T;H^1)$ such that $u$ is a mild solution
  of (\ref{e:gdnls}), i.e., $u$ satisfies (\ref{e:mild_soln}).
  Furthermore, for all $s$ such that $0\leq s <1,$ $u\in
  C(0,T;H^{s}).$
\end{theorem}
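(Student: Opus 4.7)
The plan is to approximate \eqref{e:gdnls} by a family of mollified problems admitting smooth solutions, obtain $H^1$ a priori bounds uniform in the regularization parameter on a short time interval, and pass to the limit using weak-$*$ compactness in $L^\infty H^1$ together with Aubin--Lions-type strong compactness in $C H^s$ for $s < 1$. For each $\vareps > 0$, I would consider the mollified equation
\[
  iu^{\vareps}_t + i \moll\bracket{\abs{\moll u^{\vareps}}^{2\sigma}\,\moll \partial_x u^{\vareps}} + u^{\vareps}_{xx} = 0, \qquad u^{\vareps}(0) = u_0,
\]
where $\moll$ is a self-adjoint smoothing operator (for concreteness, Fourier truncation to $\abs{k}\leq 1/\vareps$ on $\T$). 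Since $\moll$ provides smoothing at a cost of $\vareps^{-1}$ per derivative, the nonlinear term is locally Lipschitz on $H^1$; combining Duhamel's formula with the free Schr\"odinger semigroup $e^{i\partial_x^2 t}$ and a Picard iteration produces a unique solution $u^{\vareps}\in C([0,T_\vareps]; H^1)$, which is in fact smooth due to the regularizing action of $\moll$.

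The second and most delicate step is to derive uniform-in-$\vareps$ $H^1$ bounds on an interval $[0,T]$ whose length depends only on $\norm{u_0}_{H^1}$. Mass conservation for the mollified problem gives $\norm{u^{\vareps}(t)}_{L^2} = \norm{u_0}_{L^2}$. Testing the mollified equation against $\partial_x^2 \bar u^{\vareps}$ in $L^2$, using the self-adjointness of $\moll$, and integrating by parts, the top-order $u^{\vareps}_{xxx}$ contributions cancel and one arrives at an identity of the schematic form
\[
  \frac{d}{dt}\norm{\partial_x u^{\vareps}}_{L^2}^2 \;=\; -\sigma \int \abs{v^{\vareps}}^{2\sigma - 2}\,\partial_x\abs{v^{\vareps}}^2 \,\abs{\partial_x v^{\vareps}}^2\,dx, \qquad v^{\vareps} := \moll u^{\vareps}.
\]
I would then bound the right-hand side using the one-dimensional Sobolev embedding $H^1 \hookrightarrow L^\infty$, Gagliardo--Nirenberg interpolation, and, if necessary, a lower-order correction built from the conserved Hamiltonian \eqref{e:hamiltonian}, aiming at a differential inequality for $\norm{\partial_x u^{\vareps}}_{L^2}^2$ whose right-hand side is a continuous function of $\norm{u^{\vareps}}_{H^1}$ alone. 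Gr\"onwall's inequality would then produce a time $T > 0$ independent of $\vareps$ on which $\set{u^{\vareps}}$ is uniformly bounded in $L^\infty(0,T;H^1)$.

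With the uniform bound in hand, Banach--Alaoglu yields a weak-$*$ convergent subsequence $u^{\vareps}\rightharpoonup^* u$ in $L^\infty(0,T;H^1)$. Using the mollified equation and the uniform $H^1$ bound, $\partial_t u^{\vareps}$ is uniformly bounded in $L^\infty(0,T;H^{-1})$. Combining with the compactness of the embedding $H^1 \hookrightarrow H^s$ on $\T$ for each $s < 1$, the Aubin--Lions lemma gives strong convergence of a further subsequence in $C([0,T]; H^s)$. Strong convergence in any $H^s$ with $s > 1/2$ implies $u^{\vareps}\to u$ in $C([0,T]; L^\infty_x)$, which, together with the weak-$*$ bound on $\partial_x u^{\vareps}$ in $L^\infty(0,T;L^2)$, suffices to pass to the limit in the Duhamel integral in \eqref{e:mild_soln} (after, if convenient, moving the $x$-derivative off of $u^{\vareps}$ by an integration by parts under $e^{i\partial_x^2 (t-s)}$). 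The limit $u$ then satisfies the mild formulation.

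The principal obstacle I anticipate is closing the $H^1$ energy estimate uniformly in $\vareps$: the right-hand side of the energy identity above is cubic in $\partial_x v^{\vareps}$, and $\norm{\partial_x u}_{L^3}$ is \emph{not} controlled by $\norm{u}_{H^1}$ alone in one space dimension. Orchestrating the 1D Sobolev inequalities together with the conservation laws and the symmetry properties of the mollifier so as to produce a bound depending only on $\norm{u_0}_{H^1}$, uniformly in $\vareps$ and for all real $\sigma \geq 1$, is the delicate calculation promised by the abstract's ``careful use of the energy method.''
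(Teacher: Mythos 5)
Your overall scaffolding (mollified equation via Fourier truncation, Picard for the regularized problem, uniform bounds, Aubin--Lions, passage to the limit) matches the paper's. But the proposal has a genuine gap at exactly the point you flag at the end: you never close the uniform $H^1$ estimate, and the route you sketch for it --- testing against $\partial_x^2\bar u^{\vareps}$ and then trying to beat the resulting term, which is cubic in $\partial_x u^{\vareps}$, with Sobolev embedding and Gagliardo--Nirenberg --- cannot work, for the reason you yourself give: $\|u_x\|_{L^3}$ is not controlled by $\|u\|_{H^1}$ in one dimension. The paper's resolution is not a cleverer estimate of that cubic term but a change of energy functional: one takes as energy the \emph{mollified Hamiltonian} $\mathcal{H}_{\vareps}[u_{\vareps}]=\int |u_{\vareps,x}|^2+\frac{1}{(\sigma+1)^2}(\moll\bar u_{\vareps})^{\sigma+1}D_x(\moll u_{\vareps})^{\sigma+1}$ plus the lower-order quantities $\int\frac12|u_{\vareps}|^2+\bar c|\moll u_{\vareps}|^{4\sigma+2}$. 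The point is that $\mathcal{H}_{\vareps}$ is \emph{exactly conserved} by the mollified flow (this requires placing the mollifiers in the equation so that the relevant integrals cancel by self-adjointness of $\moll$ --- in particular the paper also mollifies the linear term, writing $i\moll u_{\vareps,xx}$), so the dangerous cubic-in-derivative term never has to be estimated at all; it sits inside a conserved quantity. One then shows (i) by Young's inequality that $\mathcal{H}_{\vareps}$ plus the lower-order terms dominates $\frac12\|u_{\vareps}\|_{H^1}^2$, and (ii) that the time derivatives of the lower-order terms $\int|u_{\vareps}|^{2r}$ are bounded by powers of $1+\|u_{\vareps}\|_{H^1}^2$. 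This yields $\frac{d}{dt}\mathcal{E}_{\vareps}\le c(1+\mathcal{E}_{\vareps})^p$ and hence the $\vareps$-independent existence time. Without this (or an equivalent device), your Step 2 does not go through, and everything downstream collapses.

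Two smaller points. First, you keep the linear term $u_{xx}$ unmollified and propose solving the regularized problem by Duhamel plus contraction; that is legitimate, but then $u^{\vareps}$ is only in $C([0,T_{\vareps}];H^1)$, not smooth, so ``testing against $\partial_x^2\bar u^{\vareps}$'' is not directly justified, and moreover the exact-conservation argument above is easiest to run when the linear term is mollified so the equation is a genuine ODE on $H^1$ with $u_{\vareps,t}$ in the range of $\moll$. Second, your limit-passage argument (strong convergence in $C H^{s}$ for $s>1/2$ times weak convergence of $\partial_x u^{\vareps}$ in $L^2_tL^2_x$) is a reasonable alternative to the paper's more laborious Step 4 with the auxiliary mollifier $\mathcal{J}_{\delta}$, provided you also account for the convergence of the mollified propagator to $e^{i\partial_x^2 t}$ and for the outer $\moll$ wrapped around the nonlinearity; these are routine but should be stated.
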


In addition to proving the existence of solutions for the initial
value problem in $H^{1}$ as described in Theorem \ref{mainTheorem}, we
also are able to demonstrate some properties of solutions.  In
particular, we will discuss the conservation of the invariants for
these solutions, and the regularity with regard to time.

Our existence proof follows the energy method.  We first introduce a
mollified evolution equation, for which we can apply the Picard
theorem to get existence of solutions.  We prove an energy estimate
for the solutions of the mollified problem, which demonstrates that
the solutions cannot blow up until a certain time, with this time
being independent of the mollification parameter.  We conclude that
the solutions of the mollified problem exist on a common time
interval, and this enables us to take the limit of the solutions as
the mollification parameter vanishes.  Finally, we demonstrate that
the limit solves the non-mollified evolution equation (\ref{e:gdnls}).
The energy method is, generally speaking, easier to apply at higher
regularity.  In the present setting, in which the data is only in
$H^{1},$ this final step is fairly delicate, and requires significant
effort.

If we allow for higher regularity, taking the initial data from
$H^{2}$ instead, we can make additional estimates, and we can then
prove more.  This is the content of our second theorem:
\begin{theorem}\label{t:h2} Let $\sigma\geq 1$ be given.  Let
  $u_{0}\in H^{2}.$ There exists $T>0$ and a unique $u \in C(0,T;H^2)$
  satisfying \eqref{e:mild_soln}.  Furthermore, the solution depends
  continuously upon the initial conditions in the $H^s$ norm for all
  $0\leq s<2.$
\end{theorem}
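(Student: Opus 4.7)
The plan is to rerun the mollified-equation scheme of Theorem~\ref{mainTheorem} at the higher regularity $H^{2}$, which is cleaner because $H^{2}\hookrightarrow C^{1}$ in one dimension renders the nonlinearity $|u|^{2\sigma}u_{x}$ classical. The key ingredient is an \emph{a priori} estimate $\frac{d}{dt}\|\ueps\|_{H^{2}}^{2}\le P(\|\ueps\|_{H^{2}})$, uniform in $\vareps$. Pairing $\partial_{x}^{2}$ of the equation against $\bar u_{xx}$ and taking the real part, the dispersive term vanishes and the only apparently top-order nonlinear contribution, $|u|^{2\sigma}u_{xxx}$, is handled by the symmetric cancellation
\[
-2\,\mathrm{Re}\int |u|^{2\sigma}u_{xxx}\,\bar u_{xx}\,dx = \int (|u|^{2\sigma})_{x}\,|u_{xx}|^{2}\,dx,
\]
obtained by one integration by parts. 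The remaining terms involve at worst $(|u|^{2\sigma})_{xx}$, controlled pointwise by $|u|^{2\sigma-2}|u_{x}|^{2}+|u|^{2\sigma-1}|u_{xx}|$; the nonnegative exponents require precisely $\sigma\ge 1$ to rule out any singularity at the zeros of $u$. Sobolev embedding closes the estimate and yields a common existence time $T>0$ together with a weak-$*$ limit $u\in L^{\infty}(0,T;H^{2})$.

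Uniqueness and continuous dependence both flow from a single $L^{2}$ estimate on the difference $w=u-v$ of two solutions. Using the mean-value bound $\big||u|^{2\sigma}-|v|^{2\sigma}\big|\le C(|u|^{2\sigma-1}+|v|^{2\sigma-1})|w|$ (valid for $\sigma\ge 1/2$) and pairing with $\bar w$, integration by parts in the $|u|^{2\sigma}w_{x}\bar w$ term produces $\frac{d}{dt}\|w\|_{L^{2}}^{2}\le C(\|u\|_{H^{2}},\|v\|_{H^{2}})\|w\|_{L^{2}}^{2}$, whence Gr\"onwall gives uniqueness together with $L^{2}$-Lipschitz dependence on initial data. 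Interpolating this $L^{2}$ convergence against the uniform $H^{2}$ bound on any family of approximating solutions then produces $C(0,T;H^{s})$ continuous dependence for every $s<2$.

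The remaining obstacle, and the sharpest departure from Theorem~\ref{mainTheorem}, is upgrading the limit $u\in L^{\infty}(0,T;H^{2})$ to the strong continuity $u\in C(0,T;H^{2})$ asserted in the theorem --- at its own top regularity the argument of Theorem~\ref{mainTheorem} produced only $C(0,T;H^{s})$ for $s<1$. I would address this via a Bona-Smith-type approximation: regularize the data to $u_{0}^{\delta}\in H^{3}$, run the same energy method at $H^{3}$ to obtain $u^{\delta}\in C(0,T;H^{3})\subset C(0,T;H^{2})$, and then show that $\{u^{\delta}\}$ is Cauchy in $C(0,T;H^{2})$ as $\delta\to 0$ by combining the $L^{2}$ difference estimate above with an $H^{2}$-level difference estimate whose growth is controlled by $\|u_{0}^{\delta}-u_{0}^{\delta'}\|_{H^{2}}\to 0$. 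Uniform convergence of continuous $H^{2}$-valued maps then transfers continuity to the limit, after which passage to the mild formulation (\ref{e:mild_soln}) is routine.
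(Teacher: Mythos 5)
Most of your proposal tracks the paper's argument closely: the same mollified scheme run at $H^{2}$, the $H^{2}$ energy estimate with the top-order term killed by the integration-by-parts identity you write (this is exactly the paper's computation of $I_{1}$, $I_{2}$, with the observation that $(\abs{u}^{2\sigma})_{xx}$ involves only nonnegative powers of $\abs{u}$ when $\sigma\geq 1$), Aubin--Lions to get $u\in L^{\infty}(0,T;H^{2})\cap C(0,T;H^{s})$ for $s<2$, and uniqueness plus continuous dependence from the $L^{2}$ Gronwall estimate on the difference followed by interpolation against the $H^{2}$ bound. One small point you elide: the Gronwall argument needs the equation to hold in a strong sense, which the paper supplies via a separate lemma (Lemma \ref{l:c1}) showing $u_{t}=iu_{xx}-\abs{u}^{2\sigma}u_{x}$ in $L^{2}$; for mild solutions this is not automatic and is worth a line.

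The genuine divergence --- and the gap --- is in your final step, upgrading to $u\in C(0,T;H^{2})$ via a Bona--Smith approximation through $H^{3}$. This runs into exactly the obstruction the paper is built to avoid: the nonlinearity $z\mapsto\abs{z}^{2\sigma}$ is not smooth for non-integer $\sigma$. An $H^{3}$ energy estimate requires controlling $\partial_{x}^{3}(\abs{u}^{2\sigma})$, which contains a term of size $\abs{u}^{2\sigma-3}\abs{u_{x}}^{3}$; for $1\leq\sigma<3/2$ the exponent $2\sigma-3$ is negative and this is singular at zeros of $u$, so the $H^{3}$ theory you want to invoke is not available by the same method. Likewise, the $H^{2}$-level difference estimate that drives the Cauchy property of $\{u^{\delta}\}$ requires estimating differences of $\partial_{x}^{2}(\abs{u}^{2\sigma})$, hence Lipschitz bounds on $z\mapsto\abs{z}^{2\sigma-2}$, which again fail for $\sigma<3/2$ (only H\"older bounds survive). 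Since the interesting regime in the paper is real $\sigma\geq1$, including $1<\sigma<2$, this step does not close as stated. The paper instead proves $C(0,T;H^{2})$ continuity without any difference estimate above $L^{2}$: it combines weak continuity in time with values in $H^{2}$ (Theorem \ref{weakTheoremContTime}, a consequence of $u\in L^{\infty}H^{2}\cap C H^{s}$, $s<2$) with continuity of the $H^{2}$ norm, the latter obtained from weak lower semicontinuity on one side and the integrated energy inequality $\|u(\cdot,t)\|_{H^{2}}^{2}\leq\|u_{0}\|_{H^{2}}^{2}+cK^{\sigma+1}t$ on the other, with left-continuity recovered by time reversal and uniqueness. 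If you want to keep a Bona--Smith flavor you would need to restrict $\sigma$ or find a substitute for the $H^{3}$ step; otherwise the weak-continuity-plus-norm-convergence route is the one that works at this level of smoothness of the nonlinearity.
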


In Section 2 below, we will state some elementary results that will be
useful throughout the rest of the text.  In Section 3, we establish
short-time existence of solutions when the initial data is in $H^{1},$
and we study the conserved quantities for these solutions.  We then
establish our $H^2$ result in Section 4.  We conclude with some
remarks and conjectures in Section 5.

\section{Preliminary Results}

Our results are based on the energy method, which requires us first to
mollify \eqref{e:gdnls}, then to prove existence of solutions for the
mollified problem, and finally to let the mollification parameter
vanish.  In this section, we introduce the mollifiers and establish
some preliminary properties needed for our main results. For
$\varepsilon>0$ we introduce a mollifier $\mathcal{J}_{\varepsilon}.$
We choose this so that it is a projection, {\it i.e.},
$\mathcal{J}_{\varepsilon}=\mathcal{J}_{\varepsilon}^{2};$ since we
are studying the spatially periodic case, it makes sense to let this
be the projection onto modes with wavenumber at most
$\frac{1}{\varepsilon}.$ We let $\mathcal{F}$ represent the (periodic)
Fourier transform, so that
\begin{equation}
  \mathcal{F}(\mathcal{J}_{\varepsilon}g-g)(k) = \begin{cases}
    0 & \abs{k}\leq \lceil\frac{1}{\varepsilon}\rceil,\\
    -\mathcal{F}(g)(k) & \abs{k} > \lceil\frac{1}{\varepsilon}\rceil.
  \end{cases}
\end{equation}

We use the following mollifier inequality, which holds for any
$\varepsilon>0$ and any $f\in L^{2}.$
\begin{equation}\label{mollifierInequality}
  \|\mathcal{J}_{\varepsilon}f\|_{L^{2}}\leq\|f\|_{L^{2}}.\end{equation}
Another inequality we will use is, for $s\geq0,$
\begin{equation}\label{mollifierGains}
  \|\mathcal{J}_{\varepsilon}f\|_{H^{s}}\leq \frac{c}{\varepsilon^{s}}\|f\|_{L^{2}}.
\end{equation}
Also, we can take the following limit, for $g\in H^{m}$ with $m\geq
0:$
\begin{equation}\label{mollifierLimit}
  \lim_{\varepsilon\rightarrow0^{+}}\|\mathcal{J}_{\varepsilon}g-g\|_{H^{m}}=0.
\end{equation}
These properties of the mollifier follow directly from Plancherel's
Theorem.

The next result concerns the interplay of this spatial mollifier with
a space-time norm:
\begin{lemma}\label{plancherelTonelli}
  Let $\tau>0$ and $m\geq0$ be given, and let $g$ be an element of
  $L^{2}(0,\tau;H^{m}).$ Then $\mathcal{J}_{\eta}g$ converges to $g$
  in this space, as $\eta\rightarrow0^{+}.$
\end{lemma}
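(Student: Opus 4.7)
The plan is to prove this by pointwise (in time) convergence combined with the Lebesgue dominated convergence theorem, treating $t \mapsto \|\mathcal{J}_\eta g(t) - g(t)\|_{H^m}^2$ as the integrand.

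First, I would observe that since $g \in L^2(0,\tau; H^m)$, for almost every $t \in (0,\tau)$ the slice $g(t,\cdot)$ lies in $H^m$. Applying the pointwise mollifier limit (\ref{mollifierLimit}) at each such $t$, we get
\begin{equation}\nonumber
\lim_{\eta \to 0^+} \|\mathcal{J}_\eta g(t,\cdot) - g(t,\cdot)\|_{H^m} = 0 \qquad \text{for a.e. } t \in (0,\tau).
\end{equation}

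Next, I would supply a uniform-in-$\eta$ integrable dominant. Because $\mathcal{J}_\eta$ is a Fourier projection, Plancherel yields $\|\mathcal{J}_\eta f\|_{H^m} \leq \|f\|_{H^m}$ for all $f \in H^m$ (this is the $H^m$ analogue of (\ref{mollifierInequality}), with an identical one-line proof via $|\widehat{\mathcal{J}_\eta f}(k)| \leq |\widehat{f}(k)|$). Therefore, for a.e. $t$,
\begin{equation}\nonumber
\|\mathcal{J}_\eta g(t,\cdot) - g(t,\cdot)\|_{H^m}^2 \leq 4 \, \|g(t,\cdot)\|_{H^m}^2,
\end{equation}
and the right-hand side is integrable on $(0,\tau)$ by hypothesis.

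Finally, I would invoke the dominated convergence theorem to pass the limit under the integral:
\begin{equation}\nonumber
\lim_{\eta \to 0^+} \int_0^\tau \|\mathcal{J}_\eta g(t,\cdot) - g(t,\cdot)\|_{H^m}^2\, dt = 0,
\end{equation}
which is precisely the claimed convergence in $L^2(0,\tau; H^m)$. The only mildly delicate point is the measurability of $t \mapsto \|\mathcal{J}_\eta g(t) - g(t)\|_{H^m}$, but this follows from the fact that $\mathcal{J}_\eta$ is a bounded linear operator on $H^m$, so $t \mapsto \mathcal{J}_\eta g(t)$ is strongly measurable with values in $H^m$. I do not anticipate any substantial obstacle; the argument is a standard application of dominated convergence once the pointwise limit and pointwise bound from Plancherel are in hand.
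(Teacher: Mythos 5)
Your proof is correct, but it follows a genuinely different route from the paper's. You work time-slice by time-slice: pointwise a.e. convergence of $\|\mathcal{J}_{\eta}g(t)-g(t)\|_{H^{m}}$ to zero via (\ref{mollifierLimit}), a uniform dominant $4\|g(t)\|_{H^{m}}^{2}$ coming from the fact that the projection is a contraction on $H^{m}$, and then the dominated convergence theorem. The paper instead never localizes in time: it applies Plancherel to write the squared space--time norm as $\int_{0}^{\tau}\sum_{k}(1+|k|^{2m})|\mathcal{F}(\mathcal{J}_{\eta}g-g)(k)|^{2}\,ds$, observes that the mollifier kills all modes with $|k|$ below a cutoff $N\sim 1/\eta$, exchanges the sum and the integral by Tonelli, and identifies the result as the tail of the convergent series $\sum_{k}\int_{0}^{\tau}(1+|k|^{2m})|\mathcal{F}(g)(k)|^{2}\,ds$, which must vanish as $N\to\infty$. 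Your argument is more abstract and would apply verbatim to any family of operators that are uniformly bounded on $H^{m}$ and converge strongly to the identity, at the cost of invoking dominated convergence (and, strictly speaking, passing through an arbitrary sequence $\eta_{n}\to 0^{+}$ since DCT is stated for sequences) and of the small measurability check you correctly flag. The paper's argument is more hands-on and exploits the specific structure of the Fourier projection, giving an explicit expression for the error as a frequency tail; it needs only Plancherel, Tonelli, and the elementary fact that tails of convergent series vanish. Both are complete proofs.
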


\begin{proof}We consider the norm of $\mathcal{J}_{\eta}g-g:$
$$\|\mathcal{J}_{\eta}g-g\|_{L^{2}(0,\tau;H^{m})}^{2}=\int_{0}^{\tau}\|\mathcal{J}_{\eta}g-g\|_{H^{m}}^{2}
\ ds
=\int_{0}^{\tau}\int_{0}^{2\pi}|\Lambda^{m}(\mathcal{J}_{\eta}g-g)|^{2}+|\mathcal{J}_{\eta}g-g|^{2}
\ dxds,$$ where $\Lambda$ is the operator with symbol
$\mathcal{F}\Lambda(k)=|k|.$ We use Plancherel's Theorem:
$$\|\mathcal{J}_{\eta}g-g\|_{L^{2}(0,\tau;H^{m})}^{2}=\int_{0}^{\tau}
\sum_{k=-\infty}^{\infty}(1+|k|^{2m})|\mathcal{F}(\mathcal{J}_{\eta}g-g)(k)|^{2}\
ds.$$ The definition of $\mathcal{J}_{\eta}$ implies that for $|k|<N,$
where $N\sim\frac{1}{\eta},$ these Fourier coefficients are equal to
zero.  Also, for $|k|\geq N,$ we have $\mathcal{J}_{\eta}g(k)=0.$ We
can thus rewrite the sum:
$$\|\mathcal{J}_{\eta}g-g\|_{L^{2}(0,\tau;H^{m})}^{2}=\int_{0}^{\tau}
\sum_{|k|\geq N}(1+|k|^{2m})|\mathcal{F}(g)(k)|^{2}\ ds.$$ By
Tonelli's Theorem, we can exchange the order of the integral and the
sum:
\begin{equation}\label{almostLemma}
  \|\mathcal{J}_{\eta}g-g\|_{L^{2}(0,\tau;H^{m})}^{2}=
  \sum_{|k|\geq N}\int_{0}^{\tau}(1+|k|^{2m})|\mathcal{F}(g)(k)|^{2}\ ds.\end{equation}
Since $g\in L^{2}(0,\tau;H^{m}),$ we know that
$$\int_{0}^{\tau}\|g\|_{H^{m}}^{2}\ ds = \sum_{k=-\infty}^{\infty}\int_{0}^{\tau}
(1+|k|^{2m})|\mathcal{F}(g)(k)|^{2}\ ds$$ is finite.  The right-hand
side of (\ref{almostLemma}) is therefore the sum of the tails of a
convergent series; as such, it goes to zero as $N\rightarrow\infty,$
which is the same as saying as $\eta$ vanishes.  This completes the
proof of the lemma.
\end{proof}

We will also make use of the elementary Sobolev interpolation lemma:
\begin{lemma}\label{interp}
  Let $m\geq0$ and $\ell\geq m$ be given.  Let $f\in H^{\ell}$ be
  given.  Then, the following inequality holds:
  \begin{equation}
    \|f\|_{H^m}\leq c \|f\|_{H^\ell}^{m/\ell}\|f\|_{L^2}^{1-m/\ell}.\end{equation}
\end{lemma}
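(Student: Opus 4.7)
The plan is to prove this by a direct Fourier-side computation combined with H\"older's inequality, since we are on the torus and have Plancherel's theorem at our disposal (as already exploited in the proof of Lemma \ref{plancherelTonelli}). The degenerate cases $m=0$ and $m=\ell$ are trivial (both sides agree up to the constant), so I would reduce to $0 < m < \ell$.

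For the main case, first I would expand the $H^m$ norm on the Fourier side, using the equivalent norm
$$ \|f\|_{H^m}^{2} = \sum_{k \in \Z} (1+|k|^{2m}) |\mathcal{F}(f)(k)|^{2}, $$
and similarly for $H^\ell$ and $L^2$. The next step is to write the weight $(1+|k|^{2m})$ as a product adapted to H\"older's inequality. Using the elementary inequality $1 + |k|^{2m} \leq c\,(1+|k|^{2\ell})^{m/\ell}$, valid for $\ell \geq m \geq 0$, I factor
$$ (1+|k|^{2m}) |\mathcal{F}(f)(k)|^{2} \leq c \bigl[(1+|k|^{2\ell})|\mathcal{F}(f)(k)|^{2}\bigr]^{m/\ell} \bigl[|\mathcal{F}(f)(k)|^{2}\bigr]^{1-m/\ell}. $$

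Then I apply H\"older's inequality to the sum in $k$ with conjugate exponents $p = \ell/m$ and $q = \ell/(\ell-m)$, which yields
$$ \sum_{k} (1+|k|^{2m}) |\mathcal{F}(f)(k)|^{2} \leq c \paren{\sum_{k} (1+|k|^{2\ell})|\mathcal{F}(f)(k)|^{2}}^{m/\ell} \paren{\sum_{k} |\mathcal{F}(f)(k)|^{2}}^{1-m/\ell}. $$
Recognizing the two factors as $\|f\|_{H^\ell}^{2m/\ell}$ and $\|f\|_{L^2}^{2(1-m/\ell)}$, respectively, and taking the square root gives the claimed inequality.

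There is no real obstacle here; the only thing to be careful about is the case $\ell = 0$, which forces $m = 0$ and is trivial, so that the exponents $m/\ell$ and $1-m/\ell$ in the H\"older step are well-defined throughout the nontrivial range $0 < m \leq \ell$ with $\ell > 0$.
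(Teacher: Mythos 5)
Your proof is correct, and it is the standard argument: the paper itself does not prove Lemma \ref{interp} but simply cites \cite{ambroseThesis}, where essentially this same Plancherel-plus-H\"older computation (with the weight bound $1+|k|^{2m}\leq c\,(1+|k|^{2\ell})^{m/\ell}$ and conjugate exponents $\ell/m$, $\ell/(\ell-m)$) is carried out. Your handling of the degenerate cases $m=0$ and $m=\ell$ is also fine, so there is nothing to add.
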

The proof of this lemma can be found in \cite{ambroseThesis}, among
other places.

\section{Existence theory in $H^{1}$}
\label{s:well_posedness}

Our mollified evolution equation is:
\begin{equation}\label{uMollEquation}
  u_{\varepsilon,t}=-\mathcal{J}_{\varepsilon}\left(
    |\mathcal{J}_{\varepsilon}u_{\varepsilon}|^{2\sigma}\mathcal{J}_{\varepsilon}u_{\varepsilon,x}\right)
  +i\mathcal{J}_{\varepsilon}u_{\varepsilon,xx}.
\end{equation}
There is a slight abuse of notation in the last term of the above
expression in that we imagine $u_{\varepsilon}\in H^1$, so its second
derivative may not be well defined.  However, we treat
\[
\mathcal{J}_{\varepsilon}u_{\varepsilon,xx}
= \partial_{x}^2 \paren{\moll u_{\varepsilon}},
\]
which resolves any ambiguity. Our data is unmollified, and denoted
$$u_{\varepsilon}(x,0)=u_{0}(x)\in H^1(\T).$$
We consider the case $\sigma\geq1$ and seek existence of solutions in
the space $H^{1}.$

We introduce the notation
$$u_{\varepsilon,t}=\mathcal{J}_{\varepsilon}F_{\varepsilon}(u_{\varepsilon}),$$
so that $$F_{\varepsilon}(u_{\varepsilon})=
-|\mathcal{J}_{\varepsilon}u_{\varepsilon}|^{2\sigma}\mathcal{J}_{\varepsilon}u_{\varepsilon,x}
+i\mathcal{J}_{\varepsilon}u_{\varepsilon,xx}.$$ Notice that we have
used here the property that
$\mathcal{J}_{\varepsilon}=\mathcal{J}_{\varepsilon}^{2}.$

\noindent{\bf Step 1:} Existence for a very short time.

To show that \eqref{uMollEquation} has a local in time solution, we
must prove that $\mathcal{J}_{\varepsilon}F_{\varepsilon}$ is locally
Lipschitz continuous on $H^{1}.$ To begin, we use
\eqref{mollifierGains} as follows:
$$\|\mathcal{J}_{\varepsilon}F_{\varepsilon}(f)-\mathcal{J}_{\varepsilon}F_{\varepsilon}(g)\|_{H^{1}}
\leq
\frac{c}{\varepsilon}\|F_{\varepsilon}(f)-F_{\varepsilon}(g)\|_{L^{2}}.$$
After adding and subtracting, we apply the triangle inequality to
obtain:
\begin{multline}
  \|F_{\varepsilon}(f)-F_{\varepsilon}(g)\|_{L^{2}}\leq\||\mathcal{J}_{\varepsilon}f|^{2\sigma}
  (\mathcal{J}_{\varepsilon}(f_{x}-g_{x}))\|_{L^{2}}\\
  +\|(\mathcal{J}_{\varepsilon}g_{x})(|\mathcal{J}_{\varepsilon}f|^{2\sigma}
  -|\mathcal{J}_{\varepsilon}g|^{2\sigma})\|_{L^{2}}
  +\|\mathcal{J}_{\varepsilon}(f_{xx}-g_{xx})\|_{L^{2}}.
\end{multline}
The first and third terms on the right-hand side can clearly be
bounded by $\|f-g\|_{H^{1}};$ for the third term this again makes use
of (\ref{mollifierGains}).  For the second term on the right-hand
side, it can also be bounded by $\|f-g\|_{H^{1}}$ (in fact it can be
bounded by $\|f-g\|_{L^{2}}$) using the fact that $|z|^{2\sigma}$ is a
Lipschitz continuous function. By the Picard Theorem for ODEs on a Banach space, this implies that
there exists $T_{\varepsilon}>0$ and $u_{\varepsilon}\in
C^{1}((-T_{\varepsilon},T_{\varepsilon});H^{1})$
which is a solution of the initial value problem.\\

\noindent{\bf Step 2:} A uniform time interval.

Next, we would like to establish that the solutions are uniformly
bounded with respect to $\varepsilon;$ this will allow us to extend
the interval of existence to be independent of $\varepsilon.$

We will shortly introduce our energy functional.  The $H^{1}$
norm of the solution, $u_{\varepsilon},$ will be controlled by the
energy.  Before introducing it, we make a few estimates.

\begin{lemma}\label{lowerOrderEstimate}
  For any real number $r\geq 1,$ there exist constants $c>0$ and
  $p\geq1$ such that
  \begin{align*}
    \frac{d}{dt}\int_{0}^{2\pi}|u_{\varepsilon}|^{2r}\ dx&\leq c(1+\|u_{\varepsilon}\|_{H^{1}}^{2})^{p},\\
    \frac{d}{dt}\int_{0}^{2\pi}|\mathcal{J}_{\varepsilon}u_{\varepsilon}|^{2r}\
    dx &\leq c(1+\|u_{\varepsilon}\|_{H^{1}}^{2})^{p}.
  \end{align*}
\end{lemma}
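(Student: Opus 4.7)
The plan is to differentiate under the integral, substitute the mollified equation \eqref{uMollEquation}, and then bound each resulting piece in a way that is uniform in $\varepsilon$. Starting from
\begin{equation*}
\frac{d}{dt}\int_{0}^{2\pi}|u_{\varepsilon}|^{2r}\,dx = 2r\,\mathrm{Re}\int_{0}^{2\pi}|u_{\varepsilon}|^{2r-2}\bar{u}_{\varepsilon}u_{\varepsilon,t}\,dx,
\end{equation*}
the substitution of \eqref{uMollEquation} splits the right-hand side into a dispersive contribution $I_{1}$ coming from $i\mathcal{J}_{\varepsilon}u_{\varepsilon,xx}$ and a nonlinear contribution $I_{2}$ coming from the derivative nonlinearity.

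The main subtlety is that a naive Cauchy--Schwarz estimate of $u_{\varepsilon,t}$ in $L^{2}$ would bring in $\|\mathcal{J}_{\varepsilon}u_{\varepsilon,xx}\|_{L^{2}}$, which by \eqref{mollifierGains} is only bounded by $c\varepsilon^{-1}\|u_{\varepsilon}\|_{H^{1}}$ and hence fails to be uniform in $\varepsilon$. To handle $I_{1}$ cleanly, I would use the identity $\mathcal{J}_{\varepsilon}u_{\varepsilon,xx} = (\mathcal{J}_{\varepsilon}u_{\varepsilon})_{xx}$ and integrate by parts once, transferring one spatial derivative onto the factor $|u_{\varepsilon}|^{2r-2}\bar{u}_{\varepsilon}$. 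The resulting integrand is bounded pointwise by $c|u_{\varepsilon}|^{2r-2}|u_{\varepsilon,x}|\,|(\mathcal{J}_{\varepsilon}u_{\varepsilon})_{x}|$; pulling out $\|u_{\varepsilon}\|_{L^{\infty}}^{2r-2}\leq c\|u_{\varepsilon}\|_{H^{1}}^{2r-2}$ via the one-dimensional Sobolev embedding and applying Cauchy--Schwarz together with \eqref{mollifierInequality} yields $|I_{1}|\leq c\|u_{\varepsilon}\|_{H^{1}}^{2r}$. For $I_{2}$, I would use the self-adjointness of $\mathcal{J}_{\varepsilon}$ to rewrite
\begin{equation*}
I_{2} = -2r\,\mathrm{Re}\int_{0}^{2\pi}\mathcal{J}_{\varepsilon}\bigl(|u_{\varepsilon}|^{2r-2}\bar{u}_{\varepsilon}\bigr)\cdot|\mathcal{J}_{\varepsilon}u_{\varepsilon}|^{2\sigma}\mathcal{J}_{\varepsilon}u_{\varepsilon,x}\,dx,
\end{equation*}
and then combine \eqref{mollifierInequality}, H\"{o}lder's inequality, and the one-dimensional Sobolev embedding to bound $|I_{2}|\leq c\|u_{\varepsilon}\|_{H^{1}}^{2r+2\sigma}$. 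Together these give a bound of the form $c(1+\|u_{\varepsilon}\|_{H^{1}}^{2})^{p}$ with any $p\geq r+\sigma$, which is exactly the first assertion.

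For the second inequality, the cleanest route is to observe that $v:=\mathcal{J}_{\varepsilon}u_{\varepsilon}$ itself satisfies $v_{t} = -\mathcal{J}_{\varepsilon}(|v|^{2\sigma}v_{x}) + iv_{xx}$: this follows from $\mathcal{J}_{\varepsilon}^{2}=\mathcal{J}_{\varepsilon}$, the fact that $\mathcal{J}_{\varepsilon}$ commutes with $\partial_{x}$, and that $\mathcal{J}_{\varepsilon}v_{xx}=v_{xx}$ since $v$ lies in the range of $\mathcal{J}_{\varepsilon}$. Running the same argument with $v$ in place of $u_{\varepsilon}$ (the integration-by-parts step is even simpler, since now the dispersive term has no outer $\mathcal{J}_{\varepsilon}$), and finishing with $\|v\|_{H^{1}}\leq\|u_{\varepsilon}\|_{H^{1}}$ from \eqref{mollifierInequality}, yields the analogous bound.
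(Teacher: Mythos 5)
Your proof is correct and follows essentially the same route as the paper: differentiate under the integral, substitute the mollified equation, integrate by parts once in the dispersive term, and close with the one-dimensional Sobolev embedding, Cauchy--Schwarz, and the $L^{2}$-boundedness of $\mathcal{J}_{\varepsilon}$. Your treatment of the second inequality --- observing that $v=\mathcal{J}_{\varepsilon}u_{\varepsilon}$ satisfies the closed equation $v_{t}=-\mathcal{J}_{\varepsilon}(|v|^{2\sigma}v_{x})+iv_{xx}$ and rerunning the argument --- is a clean way of making precise what the paper dismisses as ``similar.''
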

We emphasize that in the above inequalities, $c$ and $p$ are
independent of $\varepsilon.$
\begin{proof}
  This follows from direct calculation.  For the first expression,
  \begin{equation*}
    \begin{split}
      \frac{d}{dt}\int |u_{\varepsilon}|^{2r} & = \int \bar \ueps^{r} \ueps^{r-1} u_{\vareps,t} + \cc\\
      &=\int -\bar \ueps^{r} \ueps^{r-1}\abs{\moll
        \ueps}^{2\sigma}\moll u_{\vareps, x} + i \bar\ueps^{r}
      \ueps^{r-1} \moll u_{\vareps, xx} + \cc.
    \end{split}
  \end{equation*}
  We integrate by parts in the second integral on the right-hand side,
  and we apply a derivative:
  \begin{equation*}
    \begin{split}
      \frac{d}{dt}\int |u_{\varepsilon}|^{2r}
      & = \int -\bar \ueps^{r} \ueps^{r-1}\abs{\moll \ueps}^{2\sigma}\moll u_{\vareps, x}  - i\left ( \bar \ueps^{r} \ueps^{r-1}\right)_x \moll u_{\vareps, x}  + \cc\\
      & = \int -\bar \ueps^{r} \ueps^{r-1}\abs{\moll \ueps}^{2\sigma}\moll u_{\vareps, x}  - i r\abs{\ueps}^{2r-2} \bar u_{\vareps,x} \moll u_{\vareps, x}  \\
      &\quad \quad - i (r-1) \bar \ueps^{r} \ueps^{r-2} u_{\vareps,x}
      \moll u_{\vareps, x} + \cc.
    \end{split}\end{equation*}
  We are then able to bound all of these terms:
  \begin{equation*}\begin{split}
      \frac{d}{dt}\int |u_{\varepsilon}|^{2r}
      &\leq 2\norm{\ueps}_{L^\infty}^{2r-2}\norm{\moll \ueps}_{L^\infty}^{2\sigma} \norm{\ueps}_{L^2}\norm{\moll u_{\vareps, x}}_{L^2} \\
      &\quad \quad + 2r\norm{\ueps}_{L^\infty}^{2r-2} \norm{u_{\vareps,x}}_{L^2}\norm{\moll u_{\vareps, x}}_{L^2}\\
      &\quad \quad + 2 (r-1) \norm{\ueps}_{L^\infty}^{2r-2} \norm{u_{\vareps,x}}_{L^2}\norm{\moll u_{\vareps,x}}_{L^2}\\
      & \leq \norm{\ueps}_{H^1}^{2r + 2\sigma} + (4r-2) \norm{\ueps}_{H^1}^{2r}\\
      & \leq (4r-1) \left(1+ \norm{\ueps}_{H^1}^2 \right)^{r +
        \sigma}.
    \end{split}
  \end{equation*}
  The calculation for the second inequality is similar.
\end{proof}

We introduce an approximate version of the Hamiltonian,
\eqref{e:hamiltonian}:
\begin{equation}\label{mollHamiltonian}
  \mathcal{H}_{\varepsilon}[f]=\int_{0}^{2\pi}
  |f_{x}|^{2}
  +\frac{1}{(\sigma+1)^{2}}(\mathcal{J}_{\varepsilon}\bar{f})^{\sigma+1}D_x(\mathcal{J}_{\varepsilon}f)^{\sigma+1}\ dx.
\end{equation}
This is conserved for solutions of the mollified evolution; this is
the content of the following lemma.
\begin{lemma}\label{hamiltonianEstimate} The evolution conserves
  $\mathcal{H}_{\varepsilon}[u_{\varepsilon}].$ That is,
$$\frac{d\mathcal{H}_{\varepsilon}[u_{\varepsilon}]}{dt}=0.$$
\end{lemma}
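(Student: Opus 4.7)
The plan is to differentiate $\mathcal{H}_{\varepsilon}[u_{\varepsilon}]$ in time, substitute the mollified evolution \eqref{uMollEquation}, and use the self-adjointness and idempotency of $\mathcal{J}_{\varepsilon}$ together with integration by parts on the torus to show that everything cancels. Since $\mathcal{J}_{\varepsilon}u_{\varepsilon}$ is a trigonometric polynomial of bandwidth $O(1/\varepsilon)$, it is smooth in $x$, and by Step~1 it is $C^{1}$ in time, so all formal manipulations below are rigorously justified.

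As a preparatory move I set $v:=\mathcal{J}_{\varepsilon}u_{\varepsilon}$ and apply $\mathcal{J}_{\varepsilon}$ to \eqref{uMollEquation}; using $\mathcal{J}_{\varepsilon}=\mathcal{J}_{\varepsilon}^{2}$ this collapses to the cleaner formula $v_{t}=-\mathcal{J}_{\varepsilon}(|v|^{2\sigma}v_{x})+iv_{xx}$. I will also rewrite the nonlinear part of $\mathcal{H}_{\varepsilon}$ in a form that avoids fractional powers of complex-valued functions; using $D_{x}v^{\sigma+1}=-i(\sigma+1)v^{\sigma}v_{x}$ together with $\bar{v}^{\sigma}v^{\sigma}=|v|^{2\sigma}$ gives
\[
\frac{1}{(\sigma+1)^{2}}\bar{v}^{\sigma+1}D_{x}v^{\sigma+1}=-\frac{i}{\sigma+1}|v|^{2\sigma}\bar{v}v_{x}.
\]

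For the quadratic piece I compute $\frac{d}{dt}\int|u_{\varepsilon,x}|^{2}\,dx=-2\,\mathrm{Re}\int\bar{u}_{\varepsilon,xx}u_{\varepsilon,t}\,dx$; after substituting \eqref{uMollEquation} and moving $\mathcal{J}_{\varepsilon}$ onto $\bar{u}_{\varepsilon,xx}$ by self-adjointness, the dispersive summand contributes $-2\,\mathrm{Re}(i\int|v_{xx}|^{2}\,dx)=0$, while the nonlinear summand reduces to $2\,\mathrm{Re}\int\bar{v}_{xx}|v|^{2\sigma}v_{x}\,dx$. For the nonlinear piece of $\mathcal{H}_{\varepsilon}$ I differentiate in time, use the product rule, and insert the expression for $v_{t}$; the algebra closely mirrors the classical verification that the unmollified Hamiltonian \eqref{e:hamiltonian} is conserved along \eqref{e:gdnls}, with the dispersive contribution from $iv_{xx}$ in $v_{t}$ generating exactly the term needed to cancel $2\,\mathrm{Re}\int\bar{v}_{xx}|v|^{2\sigma}v_{x}\,dx$, and with $\int\partial_{x}|v|^{2(\sigma+1)}\,dx=0$ dispatching the remaining purely nonlinear contributions.

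The main (and essentially only) obstacle is the algebraic bookkeeping, in particular verifying that the mollifier inside the nonlinearity in $v_{t}$ does not spoil the cancellation. The slickest way to see this is to observe that, thanks to $\mathcal{J}_{\varepsilon}=\mathcal{J}_{\varepsilon}^{2}$, the equation \eqref{uMollEquation} can be written in Hamiltonian form $u_{\varepsilon,t}=-i\mathcal{J}_{\varepsilon}(\delta\mathcal{H}_{\varepsilon}/\delta\bar{u}_{\varepsilon})$, where $\delta\mathcal{H}_{\varepsilon}/\delta\bar{u}_{\varepsilon}=-u_{\varepsilon,xx}-i\mathcal{J}_{\varepsilon}(|v|^{2\sigma}v_{x})$. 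Writing $G:=\delta\mathcal{H}_{\varepsilon}/\delta\bar{u}_{\varepsilon}$ and using that $\mathcal{H}_{\varepsilon}$ is real, one then has $\frac{d}{dt}\mathcal{H}_{\varepsilon}=2\,\mathrm{Im}\int\bar{G}\,\mathcal{J}_{\varepsilon}G\,dx$. Because $\mathcal{J}_{\varepsilon}$ is a self-adjoint projection with real symbol, $\int\bar{G}\,\mathcal{J}_{\varepsilon}G\,dx=\int|\mathcal{J}_{\varepsilon}G|^{2}\,dx$ is real, so its imaginary part vanishes and the lemma follows.
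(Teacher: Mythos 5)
Your proposal is correct, and its second half takes a genuinely cleaner route than the paper. The paper's proof is the brute-force version of your first two paragraphs: differentiate $\mathcal{H}_{\varepsilon}[u_{\varepsilon}]$, integrate by parts (using, as you do, that $u_{\varepsilon,t}=\mathcal{J}_{\varepsilon}u_{\varepsilon,t}$ is band-limited so that the mollifier can be moved onto the second-derivative factor), substitute \eqref{uMollEquation}, and check by hand that the resulting four integrals cancel in pairs via self-adjointness of $\mathcal{J}_{\varepsilon}$. Your structural argument replaces that bookkeeping with the observation that \eqref{uMollEquation} is exactly $u_{\varepsilon,t}=-i\mathcal{J}_{\varepsilon}\,\delta\mathcal{H}_{\varepsilon}/\delta\bar{u}_{\varepsilon}$ (I verified: $\delta\mathcal{H}_{\varepsilon}/\delta\bar{u}_{\varepsilon}=-u_{\varepsilon,xx}-i\mathcal{J}_{\varepsilon}(|v|^{2\sigma}v_{x})$ with $v=\mathcal{J}_{\varepsilon}u_{\varepsilon}$, and applying $-i\mathcal{J}_{\varepsilon}$ and using $\mathcal{J}_{\varepsilon}^{2}=\mathcal{J}_{\varepsilon}$ reproduces \eqref{uMollEquation}), after which
$\frac{d}{dt}\mathcal{H}_{\varepsilon}=\pm 2\,\mathrm{Im}\int\bar{G}\,\mathcal{J}_{\varepsilon}G=\pm2\,\mathrm{Im}\|\mathcal{J}_{\varepsilon}G\|_{L^{2}}^{2}=0$ since $\mathcal{J}_{\varepsilon}$ is a self-adjoint projection with real symbol. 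This explains \emph{why} the cancellation happens — the mollifiers were inserted precisely so as to preserve the Hamiltonian structure — and is less error-prone than the four-term computation. What the paper's direct approach buys is self-containedness: it avoids invoking the variational-derivative formalism, whose rigorous justification for non-integer powers $\sigma+1$ of complex-valued functions is slightly delicate (though the paper's own manipulations of $(\mathcal{J}_{\varepsilon}\bar{u}_{\varepsilon})^{\sigma+1}$ are formal to exactly the same degree). Either way, your argument is sound; the only point to make explicit if you write it out is the chain-rule/integration-by-parts step for the kinetic term, which, as you note in your second paragraph, relies on pairing against the band-limited $u_{\varepsilon,t}$.
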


\begin{proof}
  We take the time derivative of
  $\mathcal{H}_{\varepsilon}[u_{\varepsilon}]:$
  \begin{multline}\nonumber
    \frac{d\mathcal{H}_{\varepsilon}[u_{\varepsilon}]}{dt}=\int_{0}^{2\pi}
    u_{\varepsilon,xt}\bar{u}_{\varepsilon,x}+u_{\varepsilon,x}\bar{u}_{\varepsilon,xt}
    -\frac{1}{\sigma+1}(\mathcal{J}_{\varepsilon}\bar{u}_{\varepsilon})^{\sigma}(\mathcal{J}_{\varepsilon}\bar{u}_{\varepsilon,t})
    i\partial_{x}(\mathcal{J}_{\varepsilon}u_{\varepsilon})^{\sigma+1}\\
    -\frac{1}{\sigma+1}(\mathcal{J}_{\varepsilon}\bar{u}_{\varepsilon})^{\sigma+1}i\partial_{x}\bracket{(\mathcal{J}_{\varepsilon}u_{\varepsilon})^{\sigma}
      (\mathcal{J}_{\varepsilon}u_{\varepsilon,t})}\ dx.
  \end{multline}
  We integrate by parts and so on, yielding the following:
  \begin{multline}\nonumber
    \frac{d\mathcal{H}_{\varepsilon}[u_{\varepsilon}]}{dt}=\int_{0}^{2\pi}
    -u_{\varepsilon,t}\moll\bar{u}_{\varepsilon,xx}-\moll
    u_{\varepsilon,xx}\bar{u}_{\varepsilon,t}
    -(\mathcal{J}_{\varepsilon}\bar{u}_{\varepsilon})^{\sigma}(\mathcal{J}_{\varepsilon}\bar{u}_{\varepsilon,t})
    i(\mathcal{J}_{\varepsilon}u_{\varepsilon})^{\sigma}(\mathcal{J}_{\varepsilon}u_{\varepsilon,x})\\
    +(\mathcal{J}_{\varepsilon}\bar{u}_{\varepsilon})^{\sigma}(\mathcal{J}_{\varepsilon}\bar{u}_{\varepsilon,x})
    i(\mathcal{J}_{\varepsilon}u_{\varepsilon})^{\sigma}(\mathcal{J}_{\varepsilon}u_{\varepsilon,t})\
    dx.
  \end{multline}
  In the above expression, we have used that
  $\mathcal{J}_{\varepsilon}u_{\varepsilon,t}=u_{\varepsilon,t}$,
  allowing us to integrate by parts and still have a quantity in
  $L^2$.  Using this property, and that
  $\mathcal{J}_{\varepsilon}^{2}=\mathcal{J}_{\varepsilon}$, the
  expression becomes:
  \begin{equation}\label{toEstimateHamiltonian}
    \begin{split}
      \frac{d\mathcal{H}_{\varepsilon}[u_{\varepsilon}]}{dt}=\int_{0}^{2\pi}&
      -u_{\varepsilon,t}\moll\bar{u}_{\varepsilon,xx}-\moll
      u_{\varepsilon,xx}\bar{u}_{\varepsilon,t}\\
      &\quad
      -i|\mathcal{J}_{\varepsilon}u_{\varepsilon}|^{2\sigma}(\mathcal{J}_{\varepsilon}u_{\varepsilon,x})\bar{u}_{\varepsilon,t}
      +i|\mathcal{J}_{\varepsilon}u_{\varepsilon}|^{2\sigma}(\mathcal{J}_{\varepsilon}\bar{u}_{\varepsilon,x})u_{\varepsilon,t}\
      dx.
    \end{split}
  \end{equation}
  Next, we plug in from the evolution equation \eqref{uMollEquation}.
  We write the result as
$$\frac{d\mathcal{H}_{\varepsilon}[u_{\varepsilon}]}{dt}=A_{1}+A_{2}+A_{3}+A_{4},$$
where each of these corresponds to one of the terms in the integrand
in \eqref{toEstimateHamiltonian}.  We spell out each of these:
\begin{align*}
  A_{1}&=\int_{0}^{2\pi}\moll\bar{u}_{\varepsilon,xx}\mathcal{J}_{\varepsilon}\left(
    |\mathcal{J}_{\varepsilon}u_{\varepsilon}|^{2\sigma}\mathcal{J}_{\varepsilon}u_{\varepsilon,x}\right)\
  dx -
  \int_{0}^{2\pi}i\moll\bar{u}_{\varepsilon,xx}\left(\mathcal{J}_{\varepsilon}u_{\varepsilon,xx}\right)\
  dx,\\
  A_{2}&=\int_{0}^{2\pi}\moll
  u_{\varepsilon,xx}\mathcal{J}_{\varepsilon}\left(
    |\mathcal{J}_{\varepsilon}u_{\varepsilon}|^{2\sigma}\mathcal{J}_{\varepsilon}\bar{u}_{\varepsilon,x}\right)\
  dx +\int_{0}^{2\pi}i\moll
  u_{\varepsilon,xx}\left(\mathcal{J}_{\varepsilon}\bar{u}_{\varepsilon,xx}\right)\
  dx,\\
  \begin{split}
    A_{3}&=\int_{0}^{2\pi}i|\mathcal{J}_{\varepsilon}u_{\varepsilon}|^{2\sigma}(\mathcal{J}_{\varepsilon}u_{\varepsilon,x})
    \mathcal{J}_{\varepsilon}\left(|\mathcal{J}_{\varepsilon}u_{\varepsilon}|^{2\sigma}(\mathcal{J}_{\varepsilon}\bar{u}_{\varepsilon,x})\right)\ dx \\
    &-\int_{0}^{2\pi}|\mathcal{J}_{\varepsilon}u_{\varepsilon}|^{2\sigma}(\mathcal{J}_{\varepsilon}u_{\varepsilon,x})
    (\mathcal{J}_{\varepsilon}\bar{u}_{\varepsilon,xx})\ dx,
  \end{split}\\
  \begin{split}
    A_{4} &=
    -\int_{0}^{2\pi}i|\mathcal{J}_{\varepsilon}u_{\varepsilon}|^{2\sigma}(\mathcal{J}_{\varepsilon}\bar{u}_{\varepsilon,x})
    \mathcal{J}_{\varepsilon}\left(|\mathcal{J}_{\varepsilon}u_{\varepsilon}|^{2\sigma}\mathcal{J}_{\varepsilon}u_{\varepsilon,x}\right)\
    dx
    \\
    &-\int_{0}^{2\pi}|\mathcal{J}_{\varepsilon}u_{\varepsilon}|^{2\sigma}(\mathcal{J}_{\varepsilon}\bar{u}_{\varepsilon,x})
    (\mathcal{J}_{\varepsilon}u_{\varepsilon,xx})\ dx.
  \end{split}
\end{align*}
Adding these, and repeatedly using the fact that
$\mathcal{J}_{\varepsilon}$ is self-adjoint, we see that they all
cancel.
Thus, $$\frac{d\mathcal{H}_{\varepsilon}[u_{\varepsilon}]}{dt}=0.$$

\end{proof}

We are almost in a position to define our energy functional.  This
requires, however, first establishing a relationship between the
Hamiltonian and the $H^{1}$ norm.

\begin{lemma}\label{normEnergyLemma}
  There exists a constant $\bar{c}>0$, independent of $\varepsilon$,
  such that
  \begin{equation}\label{normEnergyInequality}
    \frac{1}{2}\|u_{\varepsilon}\|_{H^{1}}^{2}
    \leq \mathcal{H}_{\varepsilon}[u_{\varepsilon}]+\int_{0}^{2\pi} \frac{1}{2}|u_{\varepsilon}|^{2}
    + \bar{c}|\mathcal{J}_{\varepsilon}u_{\varepsilon}|^{4\sigma+2}\ dx.\end{equation}
\end{lemma}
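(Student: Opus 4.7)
The plan is to absorb the (possibly negative) nonlinear contribution to $\mathcal{H}_\varepsilon[u_\varepsilon]$ into a fraction of the quadratic gradient term $\|u_{\varepsilon,x}\|_{L^2}^2$ that appears in $\mathcal{H}_\varepsilon$, at the cost of a lower-order term in $\|\moll u_\varepsilon\|_{L^{4\sigma+2}}^{4\sigma+2}$. Since $\frac{1}{2}\|u_\varepsilon\|_{H^1}^2 = \frac{1}{2}\|u_{\varepsilon,x}\|_{L^2}^2 + \frac{1}{2}\|u_\varepsilon\|_{L^2}^2$ and the $L^2$ piece already sits on the right-hand side of \eqref{normEnergyInequality}, the claim reduces to
$$\left|\frac{1}{(\sigma+1)^2}\int_0^{2\pi}(\moll\bar{u}_\varepsilon)^{\sigma+1} D_x (\moll u_\varepsilon)^{\sigma+1}\,dx\right| \leq \frac{1}{2}\|u_{\varepsilon,x}\|_{L^2}^2 + \bar{c}\int_0^{2\pi}|\moll u_\varepsilon|^{4\sigma+2}\,dx,$$
with $\bar{c}>0$ independent of $\varepsilon$.

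Setting $v := \moll u_\varepsilon$, I would first use the pointwise chain-rule bound $|\partial_x v^{\sigma+1}| \leq (\sigma+1)|v|^\sigma |v_x|$ to obtain
$$\frac{1}{(\sigma+1)^2}\left|\int \bar{v}^{\sigma+1} D_x v^{\sigma+1}\,dx\right| \leq \frac{1}{\sigma+1}\int |v|^{2\sigma+1}|v_x|\,dx.$$
Hölder's inequality with exponents $2$ and $2$ gives $\int |v|^{2\sigma+1}|v_x|\,dx \leq \|v\|_{L^{4\sigma+2}}^{2\sigma+1}\|v_x\|_{L^2}$, and since $\moll$ commutes with $\partial_x$ we have $v_x = \moll u_{\varepsilon,x}$; then \eqref{mollifierInequality} yields $\|v_x\|_{L^2} \leq \|u_{\varepsilon,x}\|_{L^2}$. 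Young's inequality in the form $ab \leq \frac{a^2}{2\delta} + \frac{\delta b^2}{2}$ with $\delta := \sigma+1$ then produces
$$\frac{1}{\sigma+1}\|v\|_{L^{4\sigma+2}}^{2\sigma+1}\|u_{\varepsilon,x}\|_{L^2} \leq \frac{1}{2(\sigma+1)^2}\|v\|_{L^{4\sigma+2}}^{4\sigma+2} + \frac{1}{2}\|u_{\varepsilon,x}\|_{L^2}^2,$$
establishing the claim with $\bar{c} = \frac{1}{2(\sigma+1)^2}$.

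The main technical subtlety is justifying the chain rule for non-integer $\sigma+1$ and complex-valued $v$: the expression $v^{\sigma+1}$ requires a branch choice and is not classically differentiable where $v$ vanishes. However, the estimate only uses the pointwise bound $|D_x v^{\sigma+1}| \leq (\sigma+1)|v|^\sigma|v_x|$, which is manifestly unambiguous; it holds by the ordinary chain rule on the open set $\{v\neq 0\}$ (where $v$, a trigonometric polynomial of degree at most $\lceil 1/\varepsilon\rceil$, is smooth), and extends by continuity since $|v|^\sigma|v_x|$ is continuous on $\T$ for $\sigma\geq 1$. All remaining steps are standard Hölder and Young estimates, and the resulting constant $\bar{c}$ is manifestly independent of $\varepsilon$, as required.
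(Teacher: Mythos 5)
Your proof is correct and follows essentially the same route as the paper: reduce to bounding the nonlinear term of $\mathcal{H}_\varepsilon$, expand the derivative of $(\moll u_\varepsilon)^{\sigma+1}$, and apply Young's inequality together with $\|\moll u_{\varepsilon,x}\|_{L^2}\leq\|u_{\varepsilon,x}\|_{L^2}$ to arrive at the same constant $\bar{c}=\frac{1}{2(\sigma+1)^2}$ (the paper applies Young pointwise rather than after H\"older, which is immaterial). Your remark justifying the chain-rule bound for non-integer $\sigma$ is a welcome extra that the paper leaves implicit.
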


\begin{proof}
  We use the following (standard) definition of the square of the
  $H^{1}$ norm:
$$\|u_{\varepsilon}\|_{H^{2}}^{1}=\int_{0}^{2\pi}|u_{\varepsilon}|^{2}+|u_{\varepsilon,x}|^{2}\ dx.$$
Using this with (\ref{mollHamiltonian}) and
(\ref{normEnergyInequality}), we see that we are attempting to find
$\bar{c}>0$ such that the following inequality is true:
\begin{equation}\nonumber
  \int_{0}^{2\pi}\frac{1}{(\sigma+1)^{2}}(\mathcal{J}_{\varepsilon}\bar{u}_{\varepsilon})^{\sigma+1}
  i\partial_{x}(\mathcal{J}_{\varepsilon}u_{\varepsilon})^{\sigma+1}\ dx
  \leq \int_{0}^{2\pi}\frac{1}{2}|u_{\varepsilon,x}|^{2}+\bar{c}|\mathcal{J}_{\varepsilon}u_{\varepsilon}|^{4\sigma+2}\ dx.
\end{equation}
We apply the derivative in the integrand on the left-hand side:
\begin{equation}\nonumber
  \frac{1}{(\sigma+1)^{2}}(\mathcal{J}_{\varepsilon}\bar{u}_{\varepsilon})^{\sigma+1}
  i\partial_{x}(\mathcal{J}_{\varepsilon}u_{\varepsilon})^{\sigma+1}
  =\frac{i}{\sigma+1}(\mathcal{J}_{\varepsilon}\bar{u}_{\varepsilon})^{\sigma+1}
  (\mathcal{J}_{\varepsilon}u_{\varepsilon})^{\sigma}\mathcal{J}_{\varepsilon}u_{\varepsilon,x}.
\end{equation}
We bound this with its absolute value, and we also use Young's
Inequality:
\begin{equation*}
\begin{split}
  \frac{1}{(\sigma+1)^{2}}(\mathcal{J}_{\varepsilon}\bar{u}_{\varepsilon})^{\sigma+1}
  i\partial_{x}(\mathcal{J}_{\varepsilon}u_{\varepsilon})^{\sigma+1}
  &\leq\left|\frac{i}{\sigma+1}(\mathcal{J}_{\varepsilon}\bar{u}_{\varepsilon})^{\sigma+1}
    (\mathcal{J}_{\varepsilon}u_{\varepsilon})^{\sigma}\mathcal{J}_{\varepsilon}u_{\varepsilon,x}\right|\\
  &\leq
  \frac{1}{2(\sigma+1)^{2}}\left|\mathcal{J}_{\varepsilon}u_{\varepsilon}\right|^{4\sigma+2}+\frac{1}{2}|\mathcal{J}_{\varepsilon}u_{\varepsilon,x}|^{2}.
\end{split}
\end{equation*}
Integrating, and using the inequality (\ref{mollifierInequality}), the
proof is complete.  Note that the constant $\bar{c}$ is given by
$\bar{c}=\frac{1}{2(\sigma+1)^{2}}.$
\end{proof}

We are now able to define the energy; it is given by the right-hand
side of \eqref{normEnergyInequality}.  We define
\begin{equation}
  \label{e:energy}
  \mathcal{E}_\varepsilon=\mathcal{H}_{\varepsilon}[u_{\varepsilon}]
  +\int_{0}^{2\pi}\frac{1}{2}|u_{\varepsilon}|^{2} +
  \bar{c}|\mathcal{J}_{\varepsilon}u_{\varepsilon}|^{4\sigma+2}\ dx.
\end{equation}
Using Lemma \ref{lowerOrderEstimate}, Lemma \ref{hamiltonianEstimate},
and Lemma \ref{normEnergyLemma}, we are able to conclude
$$\frac{d\mathcal{E}_\varepsilon}{dt}\leq c(1+\mathcal{E}_\varepsilon)^{p}.$$

This implies that the energy cannot blow up arbitrarily fast.
Combining this with the Continuation Theorem for ODEs on a Banach
space, we conclude that there exists $T>0$ such that for all
$\varepsilon>0,$ the solution $u_{\varepsilon}$ exists on the time
interval $[0,T].$ The solutions $u_{\varepsilon}$ are therefore
uniformly bounded in the space $C([0,T];H^{1}).$

\

\noindent{\bf Step 3:} Passage to the limit as $\varepsilon\rightarrow
0.$

The result of this step is the existence of $u\in
L^{\infty}([0,T];H^{1}),$ which is the limit of a subsequence of
$u_{\varepsilon}.$ We prove this by using the Aubin-Lions Lemma (see
Lemma 8.4 of \cite{constantinFoias}).  In particular, our family of
approximate solutions, $u_{\varepsilon},$ is uniformly bounded in
$L^{\infty}([0,T];H^{1}),$ and it is therefore uniformly bounded in
$L^{2}(0,T;H^{1}).$ Furthermore, inspection of the equation
(\ref{uMollEquation}) shows that the family $u_{\varepsilon,t}$ is
uniformly bounded in $L^{2}(0,T;H^{-1}).$ Since $H^{1}([0,2\pi])$ is
compactly embedded in $L^{2}([0,2\pi]),$ which is in turn continuously
embedded in $H^{-1}([0,2\pi]),$ and since $H^{1},$ $L^{2},$ and
$H^{-1}$ are all separable, reflexive spaces, we use the Aubin-Lions
Lemma to conclude that there exists a subsequence (which we do not
relabel) and a limit $u\in L^{2}(0,T;L^{2})$ such that
$u_{\varepsilon}$ converges to $u$ in this space.

Since $u_{\varepsilon}$ converges to $u$ in $L^{2}(0,T;L^{2}),$ we see
that for almost every $t,$ $u_{\varepsilon}(\cdot,t)$ converges to
$u(\cdot,t)$ in $L^{2}$ (along a further subsequence if necessary).
Furthermore, for every $t,$ we know that $u_{\varepsilon}(\cdot,t)$ is
bounded in $H^{1}$ (uniformly with respect to both $t$ and
$\varepsilon$).  Since the unit ball of a Hilbert space is weakly
compact, there exists a subsequence which converges to a weak limit in
$H^{1};$ this weak limit, however, must be equal to $u.$ So,
$u(\cdot,t)$ is, for almost every $t,$ in $H^{1},$ with a bound
independent of $t.$ This implies $u\in L^{\infty}([0,T];H^{1}).$ We
can then use Lemma \ref{interp} to conclude that $u_{\varepsilon}$
converges to $u$ in the space $L^{2}(0,T;H^{s'})$ for any
$s'\in[0,1).$

\

\noindent{\bf Step 4:} The limit solves the original equation.

The solutions, $\ueps$, of \eqref{uMollEquation} are classical
solutions, and thus are also solutions in the mild sense.  Therefore,
for all $\varepsilon>0,$ we can write
\begin{equation}
  \label{duhamelFormula}
  u_{\varepsilon}=e^{i\mathcal{J}_{\varepsilon}\partial_{x}^{2}t}u_{0}
  -\int_{0}^{t}e^{i\mathcal{J}_{\varepsilon}\partial_{x}^{2}(t-s)}
  \mathcal{J}_{\varepsilon}\left(|\mathcal{J}_{\varepsilon}u_{\varepsilon}|^{2\sigma}
    \mathcal{J}_{\varepsilon}u_{\varepsilon,x}\right)\ ds.
\end{equation}
We need to carefully take the limit of this to show that $u$ actually
solves \eqref{e:gdnls}, in the mild sense.

First, we apply an additional mollifier.  Fix $\delta>0$ and apply
$\mathcal{J}_{\delta}$ to \eqref{duhamelFormula}. Notice that
$J_{\delta}$ commutes with both the integral and the semigroup.  This
yields the following:
\begin{equation}\label{duhamelFormulaDelta}
  \mathcal{J}_{\delta}u_{\varepsilon}
  =e^{i\mathcal{J}_{\varepsilon}\partial_{x}^{2}t}\mathcal{J}_{\delta}u_{0}
  -\int_{0}^{t}e^{i\mathcal{J}_{\varepsilon}\partial_{x}^{2}(t-s)}\mathcal{J}_{\delta}
  \mathcal{J}_{\varepsilon}\left(|\mathcal{J}_{\varepsilon}u_{\varepsilon}|^{2\sigma}
    \mathcal{J}_{\varepsilon}u_{\varepsilon,x}\right)\ ds.\end{equation}
We will first take the limit of (\ref{duhamelFormulaDelta}) as $\varepsilon$ vanishes, and then
as $\delta$ vanishes.

If we consider $t\in[0,T],$ we may define the following operators:
$$B_{\varepsilon}:L^{2}\rightarrow L^{2},\qquad 
\mathcal{B}_{\varepsilon}:L^{2}(0,t;L^{2})\rightarrow L^{2},$$ with
\begin{align}
  B_{\varepsilon}f&\equiv e^{i\mathcal{J}_{\varepsilon}\partial_{x}^{2}t}f\\
  \mathcal{B}_{\varepsilon}f &\equiv
  \int_{0}^{t}B_{\varepsilon}(s-t)f\ ds
\end{align}
We similarly define the limiting operators $B$ and $\mathcal{B},$ as
follows:
$$B:L^{2}\rightarrow L^{2},\qquad 
\mathcal{B}:L^{2}(0,t;L^{2})\rightarrow L^{2},$$ with
\begin{align}
  Bf&\equiv e^{i\partial_{x}^{2}t}f,\\
  \mathcal{B}f &\equiv \int_{0}^{t}B(t-s)f\ ds.
\end{align}
These are all bounded linear operators between the given spaces, with
operator norms independent of $\varepsilon.$ We let $f_{\varepsilon}
=\mathcal{J}_{\varepsilon}\left(|\mathcal{J}_{\varepsilon}u_{\varepsilon}|^{2\sigma}
  \mathcal{J}_{\varepsilon}u_{\varepsilon,x}\right),$ and
$f=|u|^{2\sigma}u_{x}.$

To obtain the result of this step, we need to prove the convergence of
$B_{\varepsilon}\rightarrow B$ and
$\mathcal{B}_{\varepsilon}\rightarrow\mathcal{B}$ in their strong
operator topologies, and then show
$\mathcal{J}_{\delta}f_{\varepsilon}\rightarrow\mathcal{J}_{\delta}f$
in $L^{2}(0,t;L^{2})$.

\begin{lemma}
  \label{l:Bconv}
  Let $t\in[0, T]$. $B_{\varepsilon}\rightarrow B$ in the strong
  operator topology of $L^2\to L^2$, while
  $\mathcal{B}_{\varepsilon}\rightarrow\mathcal{B}$ in the strong
  operator toplogy of $L^2([0,t];L^2)\to L^2([0,t];L^2)$.
\end{lemma}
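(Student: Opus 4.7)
The plan is to reduce both convergence statements to Plancherel's theorem combined with dominated convergence, using the fact that $\mathcal{J}_\varepsilon \partial_x^2$ is the Fourier multiplier with symbol $-k^2 \mathbf{1}_{|k|\leq \lceil 1/\varepsilon\rceil}$. As an immediate consequence, both $B_\varepsilon$ and $B$ are unitary on $L^2(\T)$, so their operator norms equal one, uniformly in $\varepsilon$ and $t\in[0,T]$; this supplies the domination needed later.

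For the first convergence, I would fix $f\in L^2$ and compute in Fourier space. On modes $|k|\leq \lceil 1/\varepsilon\rceil$, the operators $B_\varepsilon$ and $B$ act identically as multiplication by $e^{-ik^2 t}$, so $(B_\varepsilon - B)f$ has Fourier coefficients $(1 - e^{-ik^2 t})\hat f(k)$ on the truncated modes $|k|>\lceil 1/\varepsilon\rceil$ and zero elsewhere. Plancherel's theorem then yields
\begin{equation*}
\|(B_\varepsilon - B)f\|_{L^2}^2 \;\leq\; 4\sum_{|k|>\lceil 1/\varepsilon\rceil} |\hat f(k)|^2,
\end{equation*}
which is a tail of the convergent series $\sum_k |\hat f(k)|^2 = \|f\|_{L^2}^2$, hence tends to zero as $\varepsilon \to 0^+$.

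For the second convergence, I would view the difference $(\mathcal{B}_\varepsilon - \mathcal{B})f$ as an integral over $s\in[0,t]$ of $\bigl(B_\varepsilon(t-s) - B(t-s)\bigr)f(s)$, and apply Minkowski's integral inequality to move the $L^2$ norm inside. The resulting integrand tends to zero pointwise in $s$ by the first part, and is dominated by $2\|f(s)\|_{L^2}$, which lies in $L^1(0,t)$ because $(0,t)$ has finite measure and $f\in L^2(0,t;L^2)$. Dominated convergence then gives $\|(\mathcal{B}_\varepsilon - \mathcal{B})f\|_{L^2}\to 0$; if convergence is required in $L^2(0,t;L^2)$, I would apply dominated convergence once more in the outer $t$ variable, using the uniform bound $\|(\mathcal{B}_\varepsilon-\mathcal{B})f(\tau)\|_{L^2}\leq 2\sqrt{\tau}\|f\|_{L^2(0,\tau;L^2)}$.

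There is no real obstacle here: the lemma is essentially soft, because the mollifier $\mathcal{J}_\varepsilon$ was built as an orthogonal projector that commutes with $\partial_x^2$, so $B_\varepsilon$ and $B$ differ only on the truncated Fourier tail, which is precisely the place where Plancherel produces a vanishing remainder.
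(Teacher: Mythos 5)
Your proof is correct and rests on exactly the same mechanism as the paper's: since $\mathcal{J}_\varepsilon$ is a sharp Fourier projector, $B_\varepsilon$ and $B$ agree on modes $|k|\leq\lceil 1/\varepsilon\rceil$, so by Plancherel the difference is controlled by the tail of the convergent series $\sum_k|\hat f(k)|^2$. The only cosmetic difference is in the second part, where you reduce to the first part via Minkowski plus dominated convergence in $s$ (and you rightly note the uniformity in $t$ of the tail bound, which makes the domination work), whereas the paper runs the Plancherel--Tonelli computation directly on the time-integrated quantity; both are valid and essentially interchangeable.
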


\begin{proof}
  We only prove the second, more complicated, convergence result.  Let
  $g\in L^{2}(0,t;L^{2})$ be given.  Using the triangle inequality and
  H\"{o}lder's inequality:
  \begin{multline}\nonumber
    \|(\mathcal{B}_{\varepsilon}-\mathcal{B})g\|_{L^{2}}
    =\left\|\int_{0}^{t}(B_{\varepsilon}(t-s)-B(t-s))g\ ds\right\|_{L^{2}}\\
    \leq \int_{0}^{t}\left\|(B_{\varepsilon}(t-s)-B(t-s))g
    \right\|_{L^{2}}\ ds\\
    \leq c\left(\int_{0}^{t}\left\|(B_{\varepsilon}(t-s)-B(t-s))g
      \right\|_{L^{2}}^{2}\ ds\right)^{1/2}.
  \end{multline}
  Using Plancherel's Theorem and Tonelli's Theorem, we then have:
  \begin{multline}
    \int_{0}^{t}\left\|(B_{\varepsilon}(t-s)-B(t-s))g
    \right\|_{L^{2}}^{2}\ ds\\
    =\int_{0}^{t}\sum_{k=-\infty}^{\infty}\left|\mathcal{F}\left((B_{\varepsilon}(t-s)-B(t-s))g\right)(k)\right|^{2}\
    ds\\=\sum_{k=-\infty}^{\infty}\int_{0}^{t}
    \left|\mathcal{F}\left((B_{\varepsilon}(t-s)-B(t-s))g\right)(k)\right|^{2}\ ds.\\
  \end{multline}
  Recall that the mollifier $\mathcal{J}_{\varepsilon}$ leaves low
  Fourier modes unchanged, while eliminating high modes, with the
  cutoff at $1/\varepsilon.$ Thus, all the low modes of
  $B_{\varepsilon}g$ and $Bg$ cancel, leaving only large wavenumbers.
  At large wavenumbers, $\mathcal{F}(B_{\varepsilon})(k)=1,$ since at
  those wavenumbers, we have
  $\mathcal{F}(\mathcal{J}_{\varepsilon})(k)=0.$ We are now able to
  see that the limit as $\varepsilon$ vanishes is zero:
  \begin{multline}
    \lim_{\varepsilon\rightarrow0^{+}}\|(\mathcal{B}_{\varepsilon}-\mathcal{B})g\|_{L^{2}}^2
    \leq \lim_{N\rightarrow\infty} \sum_{|k|\geq N}\int_{0}^{t}
    \left|\mathcal{F}\left((B_{\varepsilon}(t-s)-B(t-s))g\right)(k)\right|^{2}\ ds\\
    =\lim_{N\rightarrow\infty} \sum_{|k|\geq N}\int_{0}^{t}
    \left|\mathcal{F}[g-B(t-s)g](k)\right|^{2}\ ds=0.
  \end{multline}
  This is equal to zero because it is the limit of the tails of a
  convergent series.  We have shown that $\mathcal{B}_{\varepsilon}$
  converges to $\mathcal{B}$ in the strong operator topology.
\end{proof}

We next prove a result which will allow us to conclude
$\mathcal{J}_{\delta}f_{\varepsilon}\rightarrow\mathcal{J}_{\delta}f$
in $L^{2}(0,t;L^{2})$:
\begin{lemma}
  \label{convLemma}
  Let $t\in[0,T]$ and $\delta>0$ be given.  The sequence
  $\mathcal{J}_{\delta}f_{\varepsilon}$ converges to
  $\mathcal{J}_{\delta}f$ in $L^{2}(0,t;L^{2}).$
\end{lemma}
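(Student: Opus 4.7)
The plan is to decompose the difference using a product-rule identity and exploit two distinct tools: strong convergence of $v_\varepsilon := \mathcal{J}_\varepsilon u_\varepsilon$ to $u$ in low-regularity norms (from Step 3) and the smoothing property of the fixed mollifier $\mathcal{J}_\delta$. Since $\delta$ is fixed while $\varepsilon \to 0$, I may restrict to $\varepsilon \leq \delta$; because both mollifiers are sharp Fourier projections and $\mathcal{J}_\delta$ cuts at a lower frequency, $\mathcal{J}_\delta \mathcal{J}_\varepsilon = \mathcal{J}_\delta$ in that regime. Combining Step 3 with Lemma \ref{plancherelTonelli} and the mollifier inequality \eqref{mollifierInequality}, I obtain $v_\varepsilon \to u$ in $L^2(0,T;H^{s'})$ for every $s' \in [0,1)$, while $\{v_\varepsilon\}$ is uniformly bounded in $L^\infty(0,T;H^1)$; choosing $s' \in (1/2,1)$ and using the one-dimensional Sobolev embedding yields $v_\varepsilon \to u$ in $L^2(0,T;L^\infty)$, with uniform $L^\infty_{t,x}$ bounds.

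I will then split
\[
\mathcal{J}_\delta f_\varepsilon - \mathcal{J}_\delta f = \mathcal{J}_\delta\bigl[(|v_\varepsilon|^{2\sigma} - |u|^{2\sigma}) v_{\varepsilon,x}\bigr] + \mathcal{J}_\delta\bigl[|u|^{2\sigma}(v_{\varepsilon,x} - u_x)\bigr].
\]
The first term is the easy one: since $\|\mathcal{J}_\delta\|_{L^2 \to L^2} \leq 1$, Lipschitz continuity of $z \mapsto |z|^{2\sigma}$ on bounded subsets of $\mathbb{C}$ (valid because $\sigma \geq 1$), together with the uniform $L^\infty$ bound, gives
\[
\|(|v_\varepsilon|^{2\sigma} - |u|^{2\sigma}) v_{\varepsilon,x}\|_{L^2(0,t;L^2)} \leq C \|v_\varepsilon - u\|_{L^2(0,t;L^\infty)} \|v_{\varepsilon,x}\|_{L^\infty(0,t;L^2)} \longrightarrow 0.
\]

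The second term is the heart of the matter: the derivative $v_{\varepsilon,x} - u_x$ does not converge strongly in any useful norm. The key idea is to trade this derivative via the product rule,
\[
|u|^{2\sigma}(v_{\varepsilon,x} - u_x) = \partial_x\bigl(|u|^{2\sigma}(v_\varepsilon - u)\bigr) - (|u|^{2\sigma})_x (v_\varepsilon - u),
\]
and handle the two pieces separately. For the piece without a derivative on $v_\varepsilon - u$, the chain rule (which is legitimate because $\sigma \geq 1$ ensures $|u|^{2\sigma}$ is at least $C^1$ in the real variables even at zeros of $u$) gives $(|u|^{2\sigma})_x \in L^\infty(0,T;L^2)$, so H\"older's inequality against the $L^2(0,T;L^\infty)$ convergence of $v_\varepsilon - u$ annihilates it. For the piece that still carries the derivative, I use the smoothing of the fixed mollifier: the operator $\mathcal{J}_\delta \partial_x$ has $L^2 \to L^2$ norm at most $c/\delta$ (read off directly from Fourier), so
\[
\bigl\|\mathcal{J}_\delta \partial_x\bigl(|u|^{2\sigma}(v_\varepsilon - u)\bigr)\bigr\|_{L^2(0,t;L^2)} \leq \frac{c}{\delta}\,\bigl\||u|^{2\sigma}\bigr\|_{L^\infty_{t,x}} \|v_\varepsilon - u\|_{L^2(0,t;L^2)} \longrightarrow 0.
\]
Since $\delta > 0$ is fixed, the $1/\delta$ prefactor is inoffensive.

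The main obstacle, as signalled, is the second term: one must recognize that the $\delta$-smoothing of $\mathcal{J}_\delta$ is precisely strong enough to absorb a single $x$-derivative, and that the product-rule identity is exactly the right bookkeeping device to leverage this. After that identification, the rest reduces to the H\"older and Sobolev estimates already assembled, and summing the two contributions concludes the proof.
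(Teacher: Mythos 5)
Your proof is correct, and it uses the same two essential mechanisms as the paper's: the local Lipschitz continuity of $z\mapsto|z|^{2\sigma}$ to handle the difference of the nonlinear coefficients, and the product rule combined with the $L^2\to L^2$ boundedness of $\mathcal{J}_{\delta}\partial_x$ to trade the derivative off $v_\varepsilon - u$, all fed by the convergence $u_\varepsilon\to u$ in $L^2(0,T;H^{s'})$ with $s'>1/2$ and Sobolev embedding. The bookkeeping, however, is genuinely different and somewhat cleaner. The paper telescopes through five intermediate terms $A_1,\dots,A_5$, peeling off one mollifier at a time and disposing of the pure mollifier-removal terms ($A_2$, $A_4$, $A_5$) via Lemma \ref{plancherelTonelli}; it also keeps the coefficient $|\mathcal{J}_\varepsilon u_\varepsilon|^{2\sigma}$ in the product-rule step, so it must bound $\partial_x(|\mathcal{J}_\varepsilon u_\varepsilon|^{2\sigma})$ uniformly in $\varepsilon$. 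You instead observe that for $\varepsilon\leq\delta$ the sharp projections satisfy $\mathcal{J}_{\delta}\mathcal{J}_{\varepsilon}=\mathcal{J}_{\delta}$, which collapses the outer mollifiers and reduces everything to a two-term splitting around $v_\varepsilon=\mathcal{J}_\varepsilon u_\varepsilon$; the mollifier-removal terms are then absorbed once and for all into the single statement $v_\varepsilon\to u$ in $L^2(0,T;H^{s'})$ (which still uses Lemma \ref{plancherelTonelli}, just earlier in the argument), and the product rule is applied with the $\varepsilon$-independent coefficient $|u|^{2\sigma}$. What your version buys is fewer terms and a derivative bound, $(|u|^{2\sigma})_x\in L^\infty_t L^2_x$, that does not need to be uniform in $\varepsilon$; what it costs is the (harmless) restriction to $\varepsilon\leq\delta$ and an explicit appeal to the chain rule for $|u|^{2\sigma}$ with $u\in H^1$, which you correctly flag as legitimate for $\sigma\geq 1$ and which the paper in any case also relies on implicitly in its $A_{12}$ estimate.
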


\begin{proof} This is essentially the same argument using Plancherel
  and Tonelli that was used above in the proof of Lemma \ref{l:Bconv},
  but with much more adding and subtracting:
  \begin{equation}
    \begin{split}
      &\mathcal{J}_{\delta}\mathcal{J}_{\varepsilon}(|\mathcal{J}_{\varepsilon}u_{\varepsilon}|^{2\sigma}
      \mathcal{J}_{\varepsilon}u_{\varepsilon,x})
      -\mathcal{J}_{\delta}(|u|^{2\sigma}u_{x})\\
      &=\underbrace{\mathcal{J}_{\delta}\Bigg[
        \mathcal{J}_{\varepsilon}(|\mathcal{J}_{\varepsilon}u_{\varepsilon}|^{2\sigma}
        \mathcal{J}_{\varepsilon}u_{\varepsilon,x})
        -\mathcal{J}_{\varepsilon}(|\mathcal{J}_{\varepsilon}u_{\varepsilon}|^{2\sigma}
        \mathcal{J}_{\varepsilon}u_{x})\Bigg]}_{A_1}
      \\
      \quad&+\underbrace{\mathcal{J}_{\delta}\Bigg[\mathcal{J}_{\varepsilon}(|\mathcal{J}_{\varepsilon}u_{\varepsilon}|^{2\sigma}
        \mathcal{J}_{\varepsilon}u_{x})
        -\mathcal{J}_{\varepsilon}(|\mathcal{J}_{\varepsilon}u_{\varepsilon}|^{2\sigma}u_{x})\Bigg]}_{A_2}
      \\
      \quad &+\underbrace{\mathcal{J}_{\delta}
        \Bigg[\mathcal{J}_{\varepsilon}(|\mathcal{J}_{\varepsilon}u_{\varepsilon}|^{2\sigma}u_{x})
        -\mathcal{J}_{\varepsilon}(|\mathcal{J}_{\varepsilon}u|^{2\sigma}u_{x})\Bigg]}_{A_3}
      \\
      \quad
      &+\underbrace{\mathcal{J}_{\delta}\Bigg[\mathcal{J}_{\varepsilon}(|\mathcal{J}_{\varepsilon}u|^{2\sigma}u_{x})
        -\mathcal{J}_{\varepsilon}(|u|^{2\sigma}u_{x})\Bigg]}_{A_4}
      +\underbrace{\mathcal{J}_{\delta}\Bigg[\mathcal{J}_{\varepsilon}(|u|^{2\sigma}u_{x})-|u|^{2\sigma}u_{x}\Bigg]}_{A_5}
      \\
      &=A_{1}+A_{2}+A_{3}+A_{4}+A_{5}.
    \end{split}
  \end{equation}
  Of these, $A_2$, $A_4$ and $A_5$ go to zero as $\varepsilon\to 0$ by
  Lemma \ref{plancherelTonelli} since they involve differences between
  a mollified and an unmollified quantity.  The other terms, $A_1$ and
  $A_3,$ go to zero because $u_{\varepsilon}$ converges to $u$.  To
  demonstrate these convergences in detail, we will rely on the
  following results from Steps 2 and 3:
  \begin{itemize}
  \item The $\ueps$ solutions are uniformly bounded in $L^\infty(0,T;
    H^1)$;
  \item $u \in L^\infty(0,T; H^1)$;
  \item $\ueps \to u$ in $L^2(0,T;H^{s'})$ for all $0\leq s'<1$.
  \end{itemize}

  For $A_{1},$ we must take care, since we do not know that
  $u_{\varepsilon}$ converges to $u$ in $L^{2}(0,t;H^{1}).$
  $\mathcal{J}_{\delta}$ was introduced just to deal with this
  difficulty.  We first take the derivative away from
  $u_{\varepsilon,x}-u_{x},$ using the product rule:
  \begin{equation*}
    \begin{split}
      A_1=\mathcal{J}_{\delta}\mathcal{J}_{\varepsilon}\left(|\mathcal{J}_{\varepsilon}u_{\varepsilon}|^{2\sigma}\partial_{x}(u_{\varepsilon}-u)\right)
      &=\underbrace{\mathcal{J}_{\delta}\mathcal{J}_{\varepsilon}\partial_{x}\Big(|\mathcal{J}_{\varepsilon}u_{\varepsilon}|^{2\sigma}(u_{\varepsilon}-u)\Big)}_{A_{11}}\\
      &\quad
      \underbrace{-\mathcal{J}_{\delta}\mathcal{J}_{\varepsilon}\left((u_{\varepsilon}-u)\partial_{x}\left(|\mathcal{J}_{\varepsilon}u_{\varepsilon}|^{2\sigma}\right)\right)}_{A_{12}}.
    \end{split}
  \end{equation*}
  For $A_{11},$ we use the fact that
  $\mathcal{J}_{\delta}\partial_{x}$ is a bounded operator, with the
  bound on the operator norm being clearly independent of
  $\varepsilon.$ Then we have the following:
  \begin{equation}\nonumber
    \int_{0}^{t}\|A_{11}\|_{L^{2}}^{2}\ ds \leq c\int_{0}^{t}\int_{0}^{2\pi}
    |\mathcal{J}_{\varepsilon}u_{\varepsilon}|^{4\sigma}|u_{\varepsilon}-u|^{2}\ dxds
    \leq c\int_{0}^{t}\int_{0}^{2\pi}|u_{\varepsilon}-u|^{2}\ dxds,
  \end{equation}
  where we have used the uniform boundedness of the $\ueps$ in
  $L^\infty(0,T;H^1)$.  Since $\ueps\to u$ in $L^2(0,T; L^2)$,
  $A_{11}\to 0$ in $L^2(0,t;L^2)$.

  For $A_{12}$,
  \begin{equation*}
    \begin{split}
      \int_0^t \norm{A_{12}}_{L^2}^2 ds &\leq \int_0^t \norm{\ueps
        -u}_{L^\infty}^2 \norm{\partial_x(\abs{\moll
          \ueps}^{2\sigma})}_{L^2}^2 ds\\
      &\leq \norm{\partial_x(\abs{\moll \ueps}^{2\sigma})}_{L^\infty_t
        L^2_x}^2 \int_0^t \norm{\ueps
        -u}_{L^\infty}^2 ds\\
      &\leq c \int_0^t \norm{\ueps -u}_{H^{3/4}}^2 ds
    \end{split}
  \end{equation*}
  where we have again used the uniform boundedness of the $\ueps$ in
  $L^\infty(0,T;H^1),$ as well as Sobolev embedding.  Since $\ueps \to
  u$ in $L^2(0,T; H^{3/4})$, we have our result for $A_{12}$, and
  $A_1$ is done.

  We treat $A_{3}$ similarly:
  \begin{equation}\nonumber
    \int_{0}^{t}\|A_{3}\|_{L^{2}}^{2}\ ds \leq \int_{0}^{t}\int_{0}^{2\pi} |u_{x}|^{2}
    \Bigg| |\mathcal{J}_{\varepsilon}u_{\varepsilon}|^{2\sigma}-|\mathcal{J}_{\varepsilon}u|^{2\sigma}
    \Bigg|^{2}\ dxds.
  \end{equation}
  Since $|z|^{2\sigma}$ is Lipschitz continuous, and using our uniform
  bounds, we can estimate this as
  \begin{equation}\nonumber
    \int_{0}^{t}\|A_{3}\|_{L^{2}}^{2}\ ds \leq c
    \int_{0}^{t}\|u_{\varepsilon}-u\|_{L^{\infty}}^{2}\ ds\\
    \leq c \int_{0}^{t}\|u_{\varepsilon}-u\|_{H^{3/4}}^{2}\ ds
  \end{equation}
  This vanishes for the same reason as above.  Hence $A_3\to 0$.

  We will investigate $A_{5},$ but we will omit the details of the
  estimates for $A_{2}$ and $A_{4},$ as they are essentially the same
  as the estimate of $A_{5}.$ We begin by writing the following:
  \begin{equation}\label{estimateA5}
    \int_{0}^{t}\|A_{5}\|_{L^{2}}^{2}\ ds 
    \leq\int_{0}^{t} 
    \|\mathcal{J}_{\varepsilon}(|u|^{2\sigma}u_{x})-|u|^{2\sigma}u_{x}\|_{L^{2}}^{2}\ ds.
  \end{equation}
  Since $|u|^{2\sigma}u_{x}$ is an element of $L^{2}(0,T;L^{2}),$ we
  see from Lemma \ref{plancherelTonelli} that the right-hand side of
  (\ref{estimateA5}) goes to zero as $\varepsilon$ vanishes.

  The same argument applies to show that $A_{2}$ and $A_{4}$ converge
  to zero.  We have now established that
  $\mathcal{J}_{\delta}f_{\varepsilon}\rightarrow\mathcal{J}_{\delta}f$
  in $L^{2}(0,t;L^{2}),$ for any $t\in[0,T].$
\end{proof}

We have now shown that for each time $t\in[0,T],$ the following holds:
\begin{equation}\label{duhamelHoldsDeltaLimit}
  \mathcal{J}_{\delta}u_{\varepsilon}\rightarrow 
  e^{i\partial_{x}^{2}t}\mathcal{J}_{\delta}u_{0}-\int_{0}^{t}e^{i\partial_{x}^{2}(s-t)}
  \mathcal{J}_{\delta}(|u|^{2\sigma}u_{x})\ ds,
\end{equation}
with the convergence being in $L^{2}.$ Since $u_{\varepsilon}$
converges to $u$ at almost every time (along a subsequence), we
conclude that
\begin{equation}\label{duhamelHoldsDelta}
  \mathcal{J}_{\delta}u= e^{i\partial_{x}^{2}t}\mathcal{J}_{\delta}u_{0}
  -\int_{0}^{t}e^{i\partial_{x}^{2}(s-t)}\mathcal{J}_{\delta}(|u|^{2\sigma}u_{x})\ ds,
\end{equation}
for almost every $t\in[0,T].$ We now must take the limit as $\delta$
vanishes.

Since $|u|^{2\sigma}u_{x}$ is in $L^{2}(0,T;L^{2}),$ by Lemma
\ref{plancherelTonelli}, we are able to take the limit in the integral
in (\ref{duhamelHoldsDelta}) as $\delta$ vanishes.  By
(\ref{mollifierLimit}), we are able to take the limit in the other
terms in (\ref{duhamelHoldsDelta}) as $\delta$ vanishes.  Thus, we
find the following:
\begin{equation}\label{duhamelHolds}
  u= e^{i\partial_{x}^{2}t}u_{0}
  -\int_{0}^{t}e^{i\partial_{x}^{2}(t-s)}|u|^{2\sigma}u_{x}\ ds,
\end{equation}
for almost all $t\in[0,T]$, with the above equality holding in the
sense of $L^2$.  We mention that for our $u\in L^{\infty}(0,T;H^{1}),$
the right-hand side of (\ref{duhamelHolds}) makes sense for all
$t\in[0,T],$ rather than just almost every $t,$ and the value of this
right-hand side would not change if we altered the definition of $u$
on a set of times of measure zero.  Therefore, we define $u$ to be
equal to the right-hand side of (\ref{duhamelHolds}) for the times at
which (\ref{duhamelHolds}) did not already hold, which is indeed a set
of measure zero.  Thus, we may say that (\ref{duhamelHolds}) is true
for all $t\in[0,T],$ and furthermore, $\mathcal{J}_{\delta}u$
converges to $u$ in $L^{2}$ for all $t\in[0,T].$

We mention now that while we have shown $u\in L^{\infty}(0,T; H^{1}),$
it is actually the case that for all $t\in[0,T],$ we have
$u(\cdot,t)\in H^{1}.$ To see this, we begin by fixing $t\in[0,T].$ We
have established just above that
$\mathcal{J}_{\delta}u_{\varepsilon}(\cdot,t)$ converges in $L^{2}$ to
$\mathcal{J}_{\delta}u(\cdot,t).$ We know, however, that
$\mathcal{J}_{\delta}u_{\varepsilon}(\cdot,t)$ is bounded in $H^{1},$
uniformly in $\delta$ and $\varepsilon;$ thus, there exists a
subsequence which converges weakly in $H^{1}$ to some limit, as
$\varepsilon$ vanishes.  This limit, however, must be
$\mathcal{J}_{\delta}u(\cdot,t);$ thus,
$\mathcal{J}_{\delta}u(\cdot,t)$ is not only in $H^{1},$ but satisfies
the same bound (which is uniform with respect to $\delta$).  We have
also shown just above that $\mathcal{J}_{\delta}u(\cdot,t)$ converges
to $u(\cdot,t)$ in $L^{2}.$ Again, since
$\mathcal{J}_{\delta}u(\cdot,t)$ is uniformly bounded in $H^{1},$ it
has a weak limit in $H^{1}$ along a subsequence.  This limit must,
however, be equal to $u(\cdot,t),$ and thus we see that $u(\cdot,t)$
is in $H^{1}$ for every $t\in[0,T].$ Furthermore, we have $u(\cdot,t)$
bounded in $H^{1},$ uniformly with respect to $t.$

We have almost established all of the claims of Theorem
\ref{mainTheorem}.  All that remains is to demonstrate continuity in
time, below the highest spatial regularity.  This is the content of
the following lemma.

\begin{lemma}
  \label{l:h1cont_in_time}
  Assume $u\in L^\infty(0,T;H^1)$ is a mild solution, then $u\in
  C(0,T;H^{s'})$ for all $0\leq s'<1$.
\end{lemma}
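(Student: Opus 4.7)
The plan is to reduce the claim to $L^2$ continuity of $u$, which I will obtain directly from the mild formulation \eqref{duhamelHolds}, and then upgrade to $H^{s'}$ continuity by Sobolev interpolation against the uniform $H^1$ bound via Lemma \ref{interp}.

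For the $L^2$ step, I would split \eqref{duhamelHolds} into the linear piece $e^{i\partial_x^2 t}u_0$ and the Duhamel integral. The linear piece is continuous from $[0,T]$ to $L^2$ by strong continuity of the unitary Schr\"odinger semigroup. For the integral, I would first verify that $f(s):=|u(\cdot,s)|^{2\sigma}u_x(\cdot,s)$ lies in $L^\infty(0,T;L^2)$: Sobolev embedding $H^1\hookrightarrow L^\infty$ combined with the uniform $H^1$ bound on $u$ yields $\|f(s)\|_{L^2}\leq \|u(\cdot,s)\|_{L^\infty}^{2\sigma}\|u_x(\cdot,s)\|_{L^2}\leq c\|u\|_{L^\infty(0,T;H^1)}^{2\sigma+1}$. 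Using unitarity of the semigroup on $L^2$, I would rewrite
$$\int_0^t e^{i\partial_x^2(t-s)}f(s)\,ds = e^{i\partial_x^2 t}\,g(t),\qquad g(t):=\int_0^t e^{-i\partial_x^2 s}f(s)\,ds.$$
For $0\leq t_1<t_2\leq T$, the triangle inequality and unitarity give
$$\|g(t_2)-g(t_1)\|_{L^2}\leq \int_{t_1}^{t_2}\|f(s)\|_{L^2}\,ds\;\longrightarrow\;0$$
by absolute continuity of the Lebesgue integral, so $g\in C([0,T];L^2)$. Splitting
$$e^{i\partial_x^2 t_2}g(t_2)-e^{i\partial_x^2 t_1}g(t_1)=e^{i\partial_x^2 t_2}\bigl(g(t_2)-g(t_1)\bigr)+\bigl(e^{i\partial_x^2 t_2}-e^{i\partial_x^2 t_1}\bigr)g(t_1),$$
both terms vanish as $t_2\to t_1$: the first by unitarity, the second by strong continuity of the semigroup on $L^2$. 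Hence the Duhamel integral is continuous in $L^2$, and consequently $u\in C([0,T];L^2)$.

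With $u\in C([0,T];L^2)\cap L^\infty(0,T;H^1)$ and $\|u\|_{L^\infty(0,T;H^1)}\leq M$, I would apply Lemma \ref{interp} with $\ell=1$ and $m=s'$ to the difference $u(\cdot,t_2)-u(\cdot,t_1)$:
$$\|u(\cdot,t_2)-u(\cdot,t_1)\|_{H^{s'}}\leq c(2M)^{s'}\|u(\cdot,t_2)-u(\cdot,t_1)\|_{L^2}^{1-s'}.$$
Since $s'<1$, the exponent $1-s'$ is positive, and the right-hand side tends to zero by the $L^2$ continuity just established. This gives $u\in C([0,T];H^{s'})$ for every $s'\in[0,1)$.

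The only mildly substantive step is the $L^2$ continuity of the Duhamel integral, and even this is routine once the nonlinearity is shown to lie in $L^\infty(0,T;L^2)$ via Sobolev embedding; the rest is standard semigroup theory. The final interpolation step is essentially automatic given the uniform $H^1$ bound already in hand from Step 2 of the main existence argument.
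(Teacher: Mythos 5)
Your proof is correct and follows the same overall route as the paper: reduce the claim to $L^2$-continuity of the mild formula, then upgrade to $H^{s'}$, $s'<1$, by interpolating against the uniform $H^1$ bound via Lemma \ref{interp}. The one place where you genuinely diverge is in how the semigroup's continuity enters the Duhamel term, and your version is actually the more careful one. The paper estimates the difference of the two integrals by pulling out the factor $\norm{e^{i\partial_x^2 h}-I}_{L^2\to L^2}$ and letting it tend to zero; but the Schr\"odinger group is not continuous in the operator norm topology (on the torus, $\norm{e^{ih\partial_x^2}-I}_{L^2\to L^2}=\sup_k\abs{e^{-ihk^2}-1}$, which stays of order $1$ for arbitrarily small $h\neq 0$ by choosing $k^2h$ near $\pi$), so that step only makes sense if one reads it as strong continuity applied to the fixed vectors $u_0$ and $\int_0^t e^{i\partial_x^2(t-s)}\abs{u}^{2\sigma}u_x\,ds$. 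Your factorization of the Duhamel integral as $e^{i\partial_x^2 t}g(t)$ with $g(t)=\int_0^t e^{-i\partial_x^2 s}f(s)\,ds$, combined with unitarity, the absolute continuity of the Lebesgue integral of the $L^\infty(0,T;L^2)$ function $f=\abs{u}^{2\sigma}u_x$, and strong continuity applied to the fixed vector $g(t_1)$, avoids this issue entirely. Everything else, including the bound $\norm{f}_{L^2}\leq c\norm{u}_{H^1}^{2\sigma+1}$ and the final interpolation, matches the paper.
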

\begin{proof}

  Taking the difference, in $L^2$,
  \begin{equation*}
    \begin{split}
      \norm{u(t+h) - u(t)}_{L^2}& \leq
      \norm{ \paren{e^{i\partial_{x}^{2}(t+h)}-
          e^{i\partial_{x}^{2}t}}u_0}_{L^2}\\
      &\quad + \norm{\int_0^{t+h} e^{i\partial_{x}^{2}(t+
          h-s)}\abs{u}^{2\sigma}u_x -\int_0^{t} e^{i\partial_{x}^{2}(t
          -
          s)}\abs{u}^{2\sigma}u_x    }_{L^2}\\
      &\leq \norm{e^{i\partial_{x}^{2}h}-I}_{L^2\to
        L^2}\norm{u_0}_{L^2}
      +\int_{t}^{t+h} \norm{\abs{u}^{2\sigma} u_x}_{L^2} ds\\
      &\quad + \norm{e^{i\partial_{x}^{2}h}-I}_{L^2\to L^2}\int_0^t
      \norm{\abs{u}^{2\sigma} u_x}_{L^2} ds\\
      &\leq \norm{e^{i\partial_{x}^{2}h}-I}_{L^2\to
        L^2}\paren{\norm{u_0}_{L^2} + T \norm{u}_{L^\infty_t
          H^1_x}^{2\sigma+1}}\\
      &\quad + h \norm{u}_{L^\infty_t H^1_x}^{2\sigma+1}
    \end{split}
  \end{equation*}
  By the continuity of the Schr\"odinger semigroup, the right-hand
  side vanishes as $h\to 0$, so $u \in C(0, T;L^2)\cap
  L^\infty(0,T;H^1)$. Using the ineterpolation estimate of Lemma
  \ref{interp}, we actually obtain $u \in C(0, T;H^{s'})\cap
  L^\infty(0,T;H^1)$ for all $0\leq s' < 1$.
\end{proof}

The proof of Theorem \ref{mainTheorem} is complete.  We are unable to
conclude that the solutions we have constructed are continuous in time
with values in $H^{1};$ at present, the most we can prove is that the
solution is weakly continuous in time with values in $H^{1}.$ We
establish this and other additional properties of the $H^{1}$
solutions in the next subsection.

\subsection{Further properties of $H^{1}$ solutions}

The additional properties that we establish are motivated towards
showing that small data is global in time in $H^1$.  We are not able
to prove this statement about global existence, but we will discuss in
Section \ref{towardsGlobal} below how much more is needed to close the
gap.

While we have established above that the solution $u$ is in $H^{1}$ at
each time, we have only established that $u$ is continuous in $H^{s}$
for $s<1.$ We are not able to establish continuity in time in $H^{1};$
the best we can obtain is weak continuity in time, which is the
content of the following lemma.  We note that to establish continuity
in time, given that we do establish weak continuity, it would only be
necessary to prove that the $H^{1}$ norm of $u$ is continuous in time.
We note that in Section \ref{H2Solutions} below, for solutions with
initial data in $H^{2},$ we are able to establish continuity of the
highest norm, and thus continuity in $H^{2}.$

\begin{theorem} \label{weakTheoremContTime} Let $k>0,$ and let $u\in
  L^{\infty}(0,T;H^{k})\cap C(0,T;H^{s}),$ for all $s\in [0,k).$ Then
  $u$ is weakly continuous in time with values in $H^{k}.$
\end{theorem}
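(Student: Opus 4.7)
The plan is to prove weak continuity via a standard subsequence/Banach--Alaoglu argument, using the uniform $H^{k}$ bound together with the hypothesized strong continuity in the weaker norms $H^{s}$ for $s<k$. Fix an arbitrary $t_{0}\in[0,T]$ and an arbitrary sequence $t_{n}\to t_{0}$ in $[0,T]$; the goal is to show that $\langle u(t_{n}),\phi\rangle_{H^{k},H^{-k}}\to\langle u(t_{0}),\phi\rangle_{H^{k},H^{-k}}$ for every $\phi\in H^{-k}$, i.e.\ $u(t_{n})\rightharpoonup u(t_{0})$ in $H^{k}$.

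First I would extract a weakly convergent subsequence. Setting $M:=\|u\|_{L^{\infty}(0,T;H^{k})}$, we have $\|u(t_{n})\|_{H^{k}}\leq M$ for every $n$. Since $H^{k}$ is a Hilbert space and therefore reflexive, the Banach--Alaoglu theorem produces a subsequence $\{u(t_{n_{j}})\}$ and some $v\in H^{k}$ with $u(t_{n_{j}})\rightharpoonup v$ in $H^{k}$, and moreover $\|v\|_{H^{k}}\leq M$ by weak lower semicontinuity of the norm.

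The second step is to identify $v=u(t_{0})$. Choose any $s\in[0,k)$. Weak convergence in $H^{k}$ plus the continuous embedding $H^{k}\hookrightarrow H^{s}$ gives $u(t_{n_{j}})\rightharpoonup v$ in $H^{s}$. On the other hand, the hypothesis $u\in C(0,T;H^{s})$ gives $u(t_{n_{j}})\to u(t_{0})$ strongly (hence weakly) in $H^{s}$. Weak limits are unique, so $v=u(t_{0})$ as elements of $H^{s}$, and hence as elements of $H^{k}$ (since both already lie in $H^{k}$).

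Finally I would invoke the standard subsequence principle: we have shown that every subsequence of $\{u(t_{n})\}$ admits a further subsequence converging weakly in $H^{k}$ to the single limit $u(t_{0})$, so the whole sequence $u(t_{n})$ converges weakly to $u(t_{0})$ in $H^{k}$. Since $t_{0}$ and the sequence $t_{n}\to t_{0}$ were arbitrary, this establishes weak continuity of $u$ in time with values in $H^{k}$. I do not anticipate any serious obstacle; the proof is essentially a bookkeeping exercise combining reflexivity of $H^{k}$ with the compatibility of weak and strong limits under the $H^{k}\hookrightarrow H^{s}$ embedding, and the only mildly delicate point is making sure the weak $H^{k}$ limit is correctly identified with the given strong $H^{s}$ limit.
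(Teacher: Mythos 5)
Your proof is correct, but it takes a genuinely different route from the paper's. The paper argues by duality and density: it identifies $(H^{k})^{*}$ with $H^{-k}$, approximates a test functional $\psi\in H^{-k}$ by some $\psi_{\epsilon}\in H^{-k+1/2}$, and runs an $\epsilon/3$ estimate in which the uniform bound $\|u(\cdot,t)\|_{H^{k}}\leq K$ absorbs the approximation error and the assumed continuity in the single intermediate space $H^{k-1/2}$ handles the middle term $\bigl|\langle u(\cdot,s)-u(\cdot,t),\psi_{\epsilon}\rangle_{L^{2}}\bigr|$. You instead argue by compactness: extract a weak $H^{k}$ limit of $u(t_{n})$ via reflexivity, identify it with $u(t_{0})$ by comparing against the strong $H^{s}$ limit through the embedding $H^{k}\hookrightarrow H^{s}$, and upgrade subsequential to full convergence by the Urysohn subsequence principle (legitimate here since $[0,T]$ is metric, so sequential continuity into the weak topology suffices). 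The two arguments are in a sense dual to one another; yours is softer and avoids any explicit manipulation of $H^{-k}$, while the paper's is more quantitative and makes visible exactly which intermediate norm is being used. One shared caveat worth noting: both proofs use the bound $\|u(\cdot,t)\|_{H^{k}}\leq M$ for \emph{every} $t$, which is slightly stronger than the literal $L^{\infty}(0,T;H^{k})$ hypothesis (an essential supremum); this is harmless in context because the paper has already established the everywhere-in-$t$ bound for its constructed solutions, and in any case the a.e.\ bound together with $C(0,T;H^{s})$ upgrades to an everywhere bound by precisely the weak-compactness argument you employ.
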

\begin{proof}
  Let $t\in[0,T].$ We will establish the following convergence, for
  any $\phi\in H^{k}:$
  \begin{equation}\label{weakConvH1}
    \lim_{s\rightarrow t}\langle u(\cdot,s), \phi\rangle_{H^{k}} = \langle u(\cdot,t),\phi\rangle_{H^{k}}.
  \end{equation}
  (The limit is of course taken to be a one-sided limit if either
  $t=0$ or $t=T.$) We recall that the dual of $H^{k}$ can be viewed
  either as being equal to $H^{k}$ itself, or alternatively, as being
  equal to $H^{-k}.$ Thus, given $\phi\in H^{k},$ if we define the
  bounded linear functional $L_{\phi}$ by $$L_{\phi}(f)=\langle
  f,\phi\rangle_{H^{k}}, \qquad \forall f\in H^{k},$$ then there
  exists $\tilde{\phi}\in H^{-k}$ such that for all $f\in H^{k},$ we
  have $L_{\phi}(f)=\langle f,\tilde{\phi}\rangle_{L^{2}}.$ Therefore,
  in order to establish (\ref{weakConv}), it is sufficient to show
  that for all $\tilde{\phi}\in H^{-k},$ we have
  \begin{equation}\label{hMinus1}
    \lim_{s\rightarrow t}\langle u(\cdot,s),\tilde{\phi}\rangle_{L^{2}}
    =\langle u(\cdot,t),\tilde{\phi}\rangle_{L^{2}}.
  \end{equation}

  Let $\epsilon>0$ be given.  Let $K>0$ be such that for all
  $s\in[0,T],$ we have $\|u\|_{H^{k}}\leq K.$ Let $\psi\in H^{-k}$ be
  given.  Since $H^{-k+1/2}$ is dense in $H^{-k},$ we can find
  $\psi_{\epsilon}\in H^{-k+1/2}$ such that
  $\|\psi_{\epsilon}-\psi\|_{H^{-k}}<\frac{\epsilon}{3K}.$ Then, we
  add and subtract and use the triangle inequality as follows:
  \begin{multline}\label{epsilonOver3}
    \left|\langle u(\cdot,s),\psi\rangle_{L^{2}} - \langle
      u(\cdot,t),\psi\rangle_{L^{2}}\right| \leq \left|\langle
      u(\cdot,s),\psi-\psi_{\epsilon}\rangle_{L^{2}}\right|
    \\
    +\left|\langle
      u(\cdot,s)-u(\cdot,t),\psi_{\epsilon}\rangle_{L^{2}}\right|
    +\left|\langle
      u(\cdot,t),\psi_{\epsilon}-\psi\rangle_{L^{2}}\right|.
  \end{multline}
  For the first of these, we have
$$\left|\langle u(\cdot,s),\psi-\psi_{\epsilon}\rangle_{L^{2}}\right| 
\leq \|u(\cdot,s)\|_{H^{k}}\|\psi-\psi_{\epsilon}\|_{H^{-k}}\leq
\frac{\epsilon}{3}.$$ Similarly, the third term on the right-hand side
of (\ref{epsilonOver3}) is also at most $\frac{\epsilon}{3}.$ For the
second term on the right-hand side of (\ref{epsilonOver3}), we
estimate it as
$$\left|\langle u(\cdot,s)-u(\cdot,t),\psi_{\epsilon}\rangle_{L^{2}}\right|
\leq
\|u(\cdot,s)-u(\cdot,t)\|_{H^{k-1/2}}\|\psi_{\epsilon}\|_{H^{-k+1/2}}.$$
Since $u\in C([0,T];H^{k-1/2}),$ there exists $\eta>0$ such that if
$|s-t|<\eta,$ then
$$\|u(\cdot,s)-u(\cdot,t)\|_{H^{k-1/2}}\leq \frac{\epsilon}{3(1+\|\psi_{\epsilon}\|_{H^{-k+1/2}})}.$$

We conclude that for all $s$ such that $|s-t|<\eta,$ we have
$$\left|\langle u(\cdot,s)-u(\cdot,t),\psi\rangle_{L^{2}}\right|<\epsilon.$$
This implies (\ref{hMinus1}), and as we have argued, this implies
(\ref{weakConvH1}).
\end{proof}

The other properties of our $H^{1}$ solutions which we note are
related to the conserved quantities discussed in the introduction.  It
is helpful to note that the mollified equations also have the
following conservation property, for which we omit the proof:
\begin{lemma}
  \label{l:massconservation}
  The mollified evolution conserves the mass and momentum invariants,
  \eqref{e:mass} and \eqref{e:momentum}, as stated.
\end{lemma}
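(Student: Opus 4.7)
The plan is to mimic the strategy of Lemma \ref{hamiltonianEstimate}: differentiate each invariant in time, substitute the mollified equation \eqref{uMollEquation}, and exploit the self-adjointness of $\mathcal{J}_\varepsilon$, the idempotency $\mathcal{J}_\varepsilon^2=\mathcal{J}_\varepsilon$, integration by parts in $x$, and periodicity to observe the necessary cancellations. Because $u_\varepsilon\in C^1((-T_\varepsilon,T_\varepsilon);H^1)$ and $\mathcal{J}_\varepsilon$ projects onto finitely many Fourier modes, every product, derivative, and integration by parts that appears is justified pointwise in $x$ and $t$.

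For the mass, write $\frac{d}{dt}\int |u_\varepsilon|^2\,dx = 2\,\Re\int \bar{u}_\varepsilon u_{\varepsilon,t}\,dx$ and insert \eqref{uMollEquation}. Setting $v:=\mathcal{J}_\varepsilon u_\varepsilon$, the nonlinear contribution reduces via self-adjointness to
\[
-2\,\Re\!\int \bar v\,|v|^{2\sigma} v_x\,dx = -\int |v|^{2\sigma}(|v|^2)_x\,dx = -\tfrac{1}{\sigma+1}\int \partial_x\bigl(|v|^{2\sigma+2}\bigr)\,dx = 0
\]
by periodicity. The dispersive contribution $2\,\Re\int \bar v\cdot i\,u_{\varepsilon,xx}\,dx$ equals, by Plancherel, $2\,\Re\bigl(\,i\sum_k (-k^2)\,\widehat{\mathcal{J}_\varepsilon}(k)|\widehat{u}_\varepsilon(k)|^2\bigr)$; the sum is real, so $i$ times it has zero real part.

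For the momentum, rewrite $\mathcal{P} = \tfrac{i}{2}\int \bar{u}_\varepsilon u_{\varepsilon,x}\,dx$. Differentiating and integrating by parts in the $u_{\varepsilon,xt}$ factor gives $\frac{d\mathcal{P}}{dt} = -\,\Im\int \bar{u}_{\varepsilon,t}\, u_{\varepsilon,x}\,dx$, so it suffices to show this imaginary part vanishes. Substituting the equation and pushing mollifiers by self-adjointness, the nonlinear contribution becomes $-\int |v|^{2\sigma}|v_x|^2\,dx$, manifestly real. The dispersive contribution is $-i\int \bar{u}_{\varepsilon,xx}\,v_x\,dx$, which by Plancherel equals $-i\cdot(-2\pi i)\sum_k k^3\,\widehat{\mathcal{J}_\varepsilon}(k)|\widehat{u}_\varepsilon(k)|^2$; the real sum multiplied by $i^2$ is real. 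Both pieces being real, $\Im\int \bar{u}_{\varepsilon,t}\, u_{\varepsilon,x}\,dx = 0$.

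The only genuine obstacle — and presumably the reason the proof is omitted — is purely bookkeeping: one must track carefully where mollifiers sit after self-adjointness moves and which derivatives are hit by $\mathcal{J}_\varepsilon$ versus are free. There is no analytic difficulty, because $\mathcal{J}_\varepsilon$ truncates to a finite Fourier cutoff and all identities reduce to elementary sums.
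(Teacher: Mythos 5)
Your proof is correct, and it is exactly the direct computation the authors intend: the paper omits the proof of Lemma \ref{l:massconservation}, indicating only that it parallels the conservation argument of Lemma \ref{hamiltonianEstimate}, which is precisely the strategy you carry out (substitute \eqref{uMollEquation}, move mollifiers by self-adjointness and idempotency, and kill each term by periodicity or by identifying it as real). Both the mass and momentum calculations check out, so nothing further is needed.
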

We are able to conclude that the limit, $u,$ also conserves mass and
momentum.  This is the content of the following lemma.
\begin{lemma}
  \label{l:massmomentum}
  The solution that has been constructed, $u \in L^\infty(0,T;H^1)\cap
  C(0,T;H^{s'})$ with $0\leq s' < 1$, conserves the mass and momentum
  invariants, \eqref{e:mass} and \eqref{e:momentum}.
\end{lemma}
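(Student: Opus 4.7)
The plan is to transfer conservation of mass and momentum from the mollified solutions to the limit $u$ by passing $\vareps\to 0$ in the identities $\mathcal{M}[\ueps(t)]=\mathcal{M}[u_0]$ and $\mathcal{P}[\ueps(t)]=\mathcal{P}[u_0]$ supplied by Lemma \ref{l:massconservation}, and then upgrading from almost every $t$ to every $t$ by invoking the time-continuity of $u$ furnished by Lemma \ref{l:h1cont_in_time}.

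For the mass, the argument is direct. From Step 3 of the proof of Theorem \ref{mainTheorem}, $\ueps\to u$ in $L^{2}(0,T;L^{2})$, so along a subsequence $\ueps(\cdot,t)\to u(\cdot,t)$ in $L^{2}$ for almost every $t\in[0,T]$. Continuity of $\norm{\cdot}_{L^{2}}^{2}$ on $L^{2}$ yields $\mathcal{M}[u(t)]=\mathcal{M}[u_{0}]$ for almost every $t$. Since $u\in C(0,T;H^{s'})$ for any $0\leq s'<1$ implies in particular $u\in C(0,T;L^{2})$, the map $t\mapsto\mathcal{M}[u(t)]$ is continuous, and so the equality extends from a dense set to all of $[0,T]$.

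For the momentum, the integrand carries a spatial derivative, so a little more care is needed. I would first observe that
\[
\mathcal{P}[f]=-\tfrac{1}{2}\int\bar f\,D_{x}f\,dx=-\tfrac{1}{2}\sum_{k\in\Z}k\abs{\mathcal{F}(f)(k)}^{2}
\]
is the diagonal of the sesquilinear form $B(f,g)=-\tfrac{1}{2}\int\bar f\,D_{x}g\,dx$, which satisfies $\abs{B(f,g)}\leq\tfrac{1}{2}\norm{f}_{H^{1/2}}\norm{g}_{H^{1/2}}$ by Plancherel and Cauchy--Schwarz. In particular $\mathcal{P}$ is continuous on $H^{1/2}$. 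Fix any $s'\in(1/2,1)$; Step 3 gives $\ueps\to u$ in $L^{2}(0,T;H^{s'})$, hence along a subsequence $\ueps(t)\to u(t)$ in $H^{s'}$, and \emph{a fortiori} in $H^{1/2}$, for almost every $t$. Continuity of $\mathcal{P}$ on $H^{1/2}$ then delivers $\mathcal{P}[u(t)]=\lim_{\vareps\to 0}\mathcal{P}[\ueps(t)]=\mathcal{P}[u_{0}]$ for almost every $t$, and the upgrade to every $t$ proceeds exactly as for the mass: $u\in C(0,T;H^{s'})$ with $s'>1/2$ together with continuity of $\mathcal{P}$ on $H^{s'}$ makes $t\mapsto\mathcal{P}[u(t)]$ continuous, so the identity extends from a set of full measure to all of $[0,T]$.

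The only delicate point is the momentum, since $D_{x}$ sends us out of $L^{2}$ while the available strong convergence is only in $H^{s'}$ with $s'<1$. The observation that saves the argument is that $\mathcal{P}$ is already continuous at the level of $H^{1/2}$, which lies strictly below the threshold $s'<1$; once this is noted, no serious obstacle remains beyond the bookkeeping just described.
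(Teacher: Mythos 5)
Your proposal is correct, and for the momentum it takes a genuinely different route from the paper. For the mass the two arguments coincide: both pass the identity $\mathcal{M}[\ueps(t)]=\mathcal{M}[u_0]$ through the a.e.-in-$t$ strong $L^2$ convergence of $\ueps$ to $u$. For the momentum, the paper splits $\int \bar u u_x - \int \bar u_\vareps u_{\vareps,x}$ into $\int \bar u\,(u_x-u_{\vareps,x}) + \int (\bar u - \bar u_\vareps)u_{\vareps,x}$ and kills the first piece using the weak $H^1$ convergence $\ueps(t)\rightharpoonup u(t)$ (for a.e.\ $t$) and the second using strong $L^2$ convergence together with the uniform $H^1$ bound on $u_{\vareps,x}$. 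You instead observe that $\mathcal{P}$ is the diagonal of a sesquilinear form bounded on $H^{1/2}\times H^{1/2}$, hence continuous (indeed Lipschitz on bounded sets) in $H^{1/2}$, and then feed in the strong convergence $\ueps\to u$ in $L^2(0,T;H^{s'})$ for some $s'\in(1/2,1)$ obtained by interpolation in Step~3. Both are valid; your version avoids invoking weak convergence at all and isolates the exact regularity ($H^{1/2}$) at which $\mathcal{P}$ is continuous, at the cost of relying on the interpolated convergence rather than only the raw $L^2$ convergence. You also make explicit the upgrade from almost every $t$ to every $t$ via the continuity of $t\mapsto u(t)$ in $H^{s'}$, a step the paper leaves implicit; this is a worthwhile addition since the $\vareps\to 0$ limit only yields the conservation identities off a null set of times.
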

\begin{proof}
  Recall that, up to subsequence extraction, $\ueps\to u$ in
  $L^2([0,T]; L^2)$ and, for almost all $t$, $\ueps(\cdot,t)
  \rightharpoonup u(\cdot, t)$ in $H^1$.  For the mass invariant,
  \begin{equation*}
    \begin{split}
      \abs{\mathcal{M}[u(t)] - \mathcal{M}[u_0]} &\leq
      \abs{\mathcal{M}[u(t)]
        - \mathcal{M}[\ueps(t)]} \\
      &\quad + \abs{\mathcal{M}[\ueps(t)] - \mathcal{M}[\ueps(0)]}+
      \abs{\mathcal{M}[\ueps(0)] - \mathcal{M}[u_0]}
    \end{split}
  \end{equation*}
  Since the data is not regularized, the third term vanishes.  The
  second term vanishes because the regularized flow conserves mass.
  Finally, the first term vanishes as $\varepsilon\to 0$ since we have
  convergence in $L^2$ for almost all $t$.

  For the momentum invariant,
  \begin{equation*}
    \begin{split}
      \abs{\mathcal{P}[u(t)] - \mathcal{P}[u_0]} &\leq
      \abs{\mathcal{P}[u(t)]
        - \mathcal{P}[\ueps(t)]} \\
      &\quad + \abs{\mathcal{P}[\ueps(t)] - \mathcal{P}[\ueps(0)]}+
      \abs{\mathcal{P}[\ueps(0)] - \mathcal{P}[u_0]}.
    \end{split}
  \end{equation*}
  As before, the second two terms vanish exactly.  We are thus left to
  consider the first term,
  \begin{equation*}
    \begin{split}
      \int \bar u u_x - \int \bar u_{\varepsilon} u_{\varepsilon,x} &=
      \int \bar u (u_x - u_{\varepsilon,x}) + \int (\bar u -
      \bar{u}_{\varepsilon})u_{\varepsilon,x}.
    \end{split}
  \end{equation*}
  Since $u_{\varepsilon} \rightharpoonup u$ in $H^1$ for almost all
  $t$, the first integral vanishes as $\varepsilon \to 0$.  Since we
  have convergence in $L^2$ for almost all $t$, and uniform
  boundedness of the $u_{\varepsilon}$ sequence in $H^1$, the second
  integral also vanishes.
\end{proof}

We will see below in Section 5 that the Hamiltonian is conserved for
smoother solutions than we are considering at present.  For solutions
in the energy space, the solutions we have constructed satisfy the
following result:
\begin{lemma}
  \label{l:hamiltonian2}
  For the constructed solution, $u \in L^\infty(0,T;H^1)\cap
  C(0,T;H^{s'})$ with $0\leq s' < 1$,
  \begin{equation*}
    \mathcal{H}[u(t)] \leq \mathcal{H}[u_0]
  \end{equation*}

\end{lemma}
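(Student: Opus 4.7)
The plan is to pass to the limit $\varepsilon\to 0$ in the mollified conservation identity $\mathcal{H}_{\varepsilon}[u_{\varepsilon}(t)]=\mathcal{H}_{\varepsilon}[u_{0}]$ from Lemma \ref{hamiltonianEstimate}. Because the principal part $\int|u_{x}|^{2}$ is only lower semicontinuous under the available weak $H^{1}$ convergence, the limit will naturally produce an inequality rather than an equality, which is precisely what the lemma claims.

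For the right-hand side, since $u_{0}\in H^{1}$, property \eqref{mollifierLimit} combined with the Sobolev embedding $H^{1}(\T)\hookrightarrow L^{\infty}$ gives $\mathcal{J}_{\varepsilon}u_{0}\to u_{0}$ in both $H^{1}$ and $L^{\infty}$. The nonlinear integrand of \eqref{mollHamiltonian} passes to its unmollified counterpart using the Lipschitz continuity of $z\mapsto z^{\sigma+1}$ on bounded sets, yielding $\mathcal{H}_{\varepsilon}[u_{0}]\to\mathcal{H}[u_{0}]$.

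For the left-hand side, fix $t$ in the full-measure subset of $[0,T]$ on which $u_{\varepsilon}(\cdot,t)\to u(\cdot,t)$ in $L^{2}$. The uniform $H^{1}$ bound from Step 2 combined with uniqueness of weak limits forces $u_{\varepsilon}(\cdot,t)\rightharpoonup u(\cdot,t)$ in $H^{1}$, and Lemma \ref{interp} upgrades the strong convergence to $H^{s'}$ for every $s'<1$, hence to $L^{\infty}$ by Sobolev embedding. The principal part then satisfies $\|u_{x}(\cdot,t)\|_{L^{2}}^{2}\leq\liminf_{\varepsilon}\|u_{\varepsilon,x}(\cdot,t)\|_{L^{2}}^{2}$ by weak lower semicontinuity of the Hilbert norm. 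After expanding one derivative, the nonlinear part in $\mathcal{H}_{\varepsilon}$ becomes
\[
\frac{1}{i(\sigma+1)}\int_{0}^{2\pi}(\mathcal{J}_{\varepsilon}\bar u_{\varepsilon})^{\sigma+1}(\mathcal{J}_{\varepsilon}u_{\varepsilon})^{\sigma}\,\mathcal{J}_{\varepsilon}u_{\varepsilon,x}\,dx,
\]
in which the first factor converges strongly in $L^{2}$ by $L^{\infty}$ convergence and Lipschitz continuity, while $\mathcal{J}_{\varepsilon}u_{\varepsilon,x}\rightharpoonup u_{x}$ in $L^{2}$ (pair against $\phi\in L^{2}$, use $\mathcal{J}_{\varepsilon}\phi\to\phi$ in $L^{2}$ from \eqref{mollifierLimit}, and invoke the uniform $H^{1}$ bound). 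A strong-times-weak pairing identifies the limit with the corresponding piece of $\mathcal{H}[u(t)]$. Combining the two contributions gives $\mathcal{H}[u(t)]\leq\liminf_{\varepsilon}\mathcal{H}_{\varepsilon}[u_{\varepsilon}(t)]=\lim_{\varepsilon}\mathcal{H}_{\varepsilon}[u_{0}]=\mathcal{H}[u_{0}]$ for a.e.\ $t\in[0,T]$.

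To promote the inequality from a.e.\ $t$ to every $t$, I would appeal to Theorem \ref{weakTheoremContTime}, which gives weak-in-$H^{1}$ continuity of $u$ in time, together with the strong continuity $u\in C(0,T;H^{s'})$ for $s'<1$ from Lemma \ref{l:h1cont_in_time}. For any $t\in[0,T]$, pick $t_{n}\to t$ inside the full-measure set and repeat the same weak lower semicontinuity / strong-times-weak argument on the sequence $u(\cdot,t_{n})$ to obtain $\mathcal{H}[u(t)]\leq\liminf_{n}\mathcal{H}[u(t_{n})]\leq\mathcal{H}[u_{0}]$. The main obstacle throughout is that the nonlinear part of $\mathcal{H}$ is \emph{not} continuous in the weak $H^{1}$ topology alone; it is the strong subsequential $L^{\infty}$ convergence of $\mathcal{J}_{\varepsilon}u_{\varepsilon}$, furnished by Step 3 together with Sobolev embedding, that makes the strong-times-weak pairing successful.
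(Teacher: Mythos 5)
Your proposal is correct and follows essentially the same route as the paper: conservation of $\mathcal{H}_{\varepsilon}$ along the mollified flow, weak lower semicontinuity of $\|u_x\|_{L^2}^2$ under the a.e.-in-$t$ weak $H^1$ convergence, and strong convergence of the nonlinear part via $L^\infty$/$L^2$ control of $\mathcal{J}_{\varepsilon}u_{\varepsilon}-u$ (the paper bounds the difference of the potential terms directly by $\|\mathcal{J}_{\varepsilon}u_{\varepsilon}-u\|_{L^2}$ using Lipschitz continuity rather than your strong-times-weak pairing, but the mechanism is the same). Your final step upgrading the inequality from almost every $t$ to every $t$ via Theorem \ref{weakTheoremContTime} addresses a point the paper's proof leaves implicit, and is a worthwhile addition.
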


\begin{proof}
  We begin by splitting the Hamiltonian functional into kinetic and
  potential parts as $\mathcal{H}[u] = \mathcal{K}[u] +
  \mathcal{V}[u]$, with
  \begin{equation*}
    \mathcal{K}[u] = \norm{u_x}_{L^2}^2, \quad \mathcal{V}[u] = \frac{1}{(\sigma+1)^2}\int \bar u ^{\sigma+1} D_x u^{\sigma+1}.
  \end{equation*}
  The regularized Hamlitonian is analogously split
  $\mathcal{H}_\varepsilon[u] = \mathcal{K}[u] +
  \mathcal{V}_{\varepsilon}[u]$:
  \begin{equation*}
    \mathcal{K}[u] = \norm{u_x}_{L^2}^2, \quad \mathcal{V}_\varepsilon[u] = \frac{1}{(\sigma+1)^2}\int (\mathcal{J}_{\varepsilon}\bar u) ^{\sigma+1} D_x (\mathcal{J}_{\varepsilon}u)^{\sigma+1}.
  \end{equation*}

  We first show that for all $t\in[0,T]$,
  \begin{equation*}
    \lim_{\varepsilon\to 0} \mathcal{V}_\varepsilon[u_\varepsilon(t)] = \mathcal{V}[u(t)] 
  \end{equation*}
  Suppressing $t$ and letting $v_\varepsilon \equiv
  \mathcal{J}_{\varepsilon} u_{\varepsilon}$, this follows by the
  direct calculation:
  \begin{equation*}
    \begin{split}
      \abs{\mathcal{V}_\varepsilon[u_\varepsilon] - \mathcal{V}[u]  }&\leq \frac{1}{(\sigma+1)^2}\int  \abs{(\bar v_{\varepsilon}^{\sigma+1} - \bar u^{\sigma+1})\partial_x v_{\varepsilon}^{\sigma+1}} \\
      &\quad+ \frac{1}{(\sigma+1)^2}\int \abs{ (v_{\varepsilon}^{\sigma+1} - u^{\sigma+1})\partial_x\bar u^{\sigma+1}}\\
      &  \leq  \frac{1}{\sigma+1} \paren{\norm{u^{\sigma}u_x}_{L^2}+\norm{v_{\varepsilon}^{\sigma}v_{\varepsilon,x}}_{L^2}}\norm{v_{\varepsilon}^{\sigma+1} - u^{\sigma+1}}_{L^2}\\
      &
      \leq \paren{\norm{u}_{H^1}^{\sigma+1}+\norm{v_{\varepsilon}}_{H^1}^{\sigma+1}}\max\{\norm{u}_{L^\infty},\norm{v_{\varepsilon}}_{L^\infty}\}^{\sigma}
      \norm{v_{\varepsilon} - u}_{L^2}
    \end{split}
  \end{equation*}
  Recall that $\norm{v_\varepsilon}_{H^1} \leq
  \norm{u_\varepsilon}_{H^1}$, and $u_\varepsilon$ are uniformly
  bounded in $H^1$ with respect to $t$ and $\varepsilon$.
  Furthermore, the limit, $u$, is in $L^\infty_t H^1_x$.  Thus, all
  that needs to be checked is that $\norm{v_{\varepsilon} - u}_{L^2}$
  vanishes as $\varepsilon \to 0$.  This is immediate since
  $u_\varepsilon \to u$ in $L^2_tL^2_x$ and
  \begin{equation*}
    \begin{split}
      \norm{ \mathcal{J}_{\varepsilon} u_{\varepsilon}- u}_{L^2}&\leq \norm{\mathcal{J}_{\varepsilon} (u_{\varepsilon}- u)}_{L^2} + \norm{( \mathcal{J}_{\varepsilon} -I) u}_{L^2}\\
      & \leq \norm{u_{\varepsilon}- u}_{L^2} + \norm{(
        \mathcal{J}_{\varepsilon} -I) u}_{L^2}.
    \end{split}
  \end{equation*}

  To proceed, we remark that $\mathcal{H}_\varepsilon$ is continuous
  with respect to $\varepsilon$, so that
  \begin{equation*}
    \lim_{\varepsilon\to 0}\mathcal{H}_\varepsilon[u_0] = \mathcal{H}[u_0].
  \end{equation*}
  Therefore, given $u_0$ and $t$, and $\delta>0$, for all sufficiently
  small $\varepsilon$,
  \begin{equation*}
    \abs{\mathcal{H}_\varepsilon[u_0] - \mathcal{H}[u_0]}\leq \frac{\delta}{3}, \quad\abs{\mathcal{V}_\varepsilon[u_\varepsilon(t)] - \mathcal{V}[u(t)]}\leq \frac{\delta}{3}.
  \end{equation*}
  Additionally, since $u_\varepsilon(t) \rightharpoonup u(t)$ in $H^1$
  for almost all $t$,
  \begin{equation*}
    \mathcal{K}[u(t)]  = \norm{u_x(t)}_{L^2}^2 \leq \liminf_{\varepsilon\to 0} \norm{u_{\varepsilon,x}(t)}_{L^2}^2\leq \mathcal{K}[u_{\varepsilon}(t)]  +\frac{\delta}{3}
  \end{equation*}
  provided $\varepsilon$ is sufficiently small.  This implies the
  following:
  \begin{equation*}
    \begin{split}
      \mathcal{H}[u(t)] = \mathcal{K}[u(t)] +  \mathcal{V}[u(t)] &\leq  \mathcal{K}[u_{\varepsilon}(t)] +  \mathcal{V}_{\varepsilon}[u_{\varepsilon}(t)]  + \frac{2\delta }{3}\\
      & \leq \mathcal{H}_{\varepsilon}[u_0] + \frac{2\delta }{3}\leq
      \mathcal{H}[u_0] + \delta.
    \end{split}
  \end{equation*}
  Since $\delta>0$ was arbitrary, we are done.

\end{proof}

\

\section{Well-posedness in $H^{2}$}\label{H2Solutions}

If we are willing to work with solutions in $H^2$, stronger results
can be obtained.  Specifically, in this section, we prove Theorem
\ref{t:h2}.

\noindent {\bf Step 1:} Short-time existence.

Assume now that $u_{0}\in H^{2}.$ Using the same mollified equation,
\eqref{uMollEquation}, we can immediately obtain local in time
solutions for all $\vareps>0$, in $C^1(-T_\vareps, T_\vareps, H^2)$.
The only difference is that in obtaining the Lipschitz continuity of
the right-hand side, we use (\ref{mollifierGains}) with $s=2:$
\begin{equation*}
  \|\mathcal{J}_{\varepsilon}F_{\varepsilon}(f)-\mathcal{J}_{\varepsilon}F_{\varepsilon}(g)\|_{H^{2}}\leq \frac{c}{\epsilon^2}\| F_{\varepsilon}(f)-F_{\varepsilon}(g)\|_{L^{2}}
\end{equation*}

\noindent {\bf Step 2:} Existence on a uniform time interval.  For
solutions in $H^2$, it is sufficient to work with $\|\ueps\|_{H^2}^2$
as the energy.  Indeed,
\begin{lemma}
  For the solutions $\ueps\in C^1(-T_\vareps, T_\vareps, H^2)$,
  \begin{equation}
    \frac{d}{dt}\|\ueps\|_{H^2}^2 \leq  (6\sigma + 8 \sigma^2)\|\ueps\|_{H^2}^{4\sigma + 4}
  \end{equation}
\end{lemma}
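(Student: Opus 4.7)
The plan is to differentiate $\|\ueps\|_{H^2}^2 = \int |\ueps|^2 + |\ueps_x|^2 + |\ueps_{xx}|^2\,dx$ in time, substitute the mollified equation \eqref{uMollEquation} for $\ueps_t$, and bound the resulting expressions by Sobolev embedding. Because $\moll$ is a spectral projection onto the finitely many Fourier modes with $|k|\le \lceil 1/\varepsilon\rceil$, the function $\moll\ueps$ is $C^\infty$ in $x$, so all spatial derivatives that appear below are unambiguous. The $L^2$ piece $\frac{d}{dt}\int|\ueps|^2\,dx$ vanishes by Lemma \ref{l:massconservation}, so only the $H^1$ and $H^2$ pieces need work.

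For the $H^1$ and $H^2$ pieces I would write
\[
\frac{d}{dt}\int|\partial_x^k\ueps|^2\,dx = 2\,\mathrm{Re}\int \partial_x^k\ueps_t\,\partial_x^k\bar\ueps\,dx,\qquad k\in\{1,2\},
\]
substitute $\ueps_t = -\moll(|\moll\ueps|^{2\sigma}\moll\ueps_x) + i\moll\ueps_{xx}$, and transfer the outer mollifier from $\ueps_t$ onto $\partial_x^k\bar\ueps$ using $\moll^*=\moll$ and $\moll^2=\moll$. The dispersive contribution coming from $i\moll\ueps_{xx}$ produces, after integration by parts, a purely imaginary integral whose real part is zero (the standard unitarity cancellation for the linear Schr\"odinger semigroup). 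What remains is the nonlinear contribution
\[
-2\,\mathrm{Re}\int \partial_x^k(|\moll\ueps|^{2\sigma}\moll\ueps_x)\,\moll\partial_x^k\bar\ueps\,dx,
\]
which I expand via the product rule.

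The main obstacle is the apparently derivative-losing summand in the $H^2$ piece, namely $|\moll\ueps|^{2\sigma}\moll\ueps_{xxx}$ paired with $\moll\bar\ueps_{xx}$, whose naive estimate would require an $H^3$ bound. Because $|\moll\ueps|^{2\sigma}$ is real, one integration by parts produces the crucial cancellation
\[
2\,\mathrm{Re}\int |\moll\ueps|^{2\sigma}\moll\ueps_{xxx}\,\moll\bar\ueps_{xx}\,dx = \int |\moll\ueps|^{2\sigma}\partial_x(|\moll\ueps_{xx}|^2)\,dx = -\int \partial_x(|\moll\ueps|^{2\sigma})|\moll\ueps_{xx}|^2\,dx,
\]
which needs only control of $\|\ueps\|_{H^2}$. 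This is the step that makes the $H^2$ energy method close; without it the scheme would lose a derivative.

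After the cancellation, every surviving term has the schematic form $\int \partial_x^j(|\moll\ueps|^{2\sigma})\,G_j(\moll\ueps,\moll\ueps_x,\moll\ueps_{xx})\,dx$ with $j=1,2$, where $G_j$ is a product of at most two factors depending on at most two derivatives of $\ueps$. The chain rule gives $\partial_x(|\moll\ueps|^{2\sigma})$ with leading coefficient $2\sigma$, and $\partial_x^2(|\moll\ueps|^{2\sigma})$ with coefficients $2\sigma$ and $4\sigma(\sigma-1)$. I would then pull $\|\moll\ueps\|_{L^\infty}$ and $\|\moll\ueps_x\|_{L^\infty}$ out by H\"older, bound them by the one-dimensional embedding $\|\moll f\|_{L^\infty}\le c\|f\|_{H^1}\le c\|\ueps\|_{H^2}$, and estimate the remaining $L^2$ factors by $\|\ueps\|_{H^2}$. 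Adding the $H^1$ contribution and tallying the coefficients — $3\cdot 2\sigma = 6\sigma$ from the $\partial_x(|\moll\ueps|^{2\sigma})$ pieces and $2\cdot 4\sigma^2 = 8\sigma^2$ from the $\partial_x^2(|\moll\ueps|^{2\sigma})$ pieces — produces the stated bound $(6\sigma + 8\sigma^2)\|\ueps\|_{H^2}^{4\sigma + 4}$.
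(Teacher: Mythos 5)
Your proposal is correct and follows essentially the same route as the paper: the $L^2$ piece is conserved, the linear Schr\"odinger contribution cancels by unitarity, and the would-be derivative-losing term is tamed by writing $2\,\mathrm{Re}\int|\moll\ueps|^{2\sigma}\moll\ueps_{xxx}\moll\bar\ueps_{xx}=\int|\moll\ueps|^{2\sigma}\partial_x(|\moll\ueps_{xx}|^2)$ and integrating by parts onto the real weight, after which everything closes via $H^1\hookrightarrow L^\infty$. The only cosmetic difference is that you expand the product rule before integrating by parts whereas the paper integrates by parts first and groups the remainder as $\int(|\moll u_{\vareps,x}|^2)_x(|\moll\ueps|^{2\sigma})_{xx}$; the key cancellation and the final Sobolev bookkeeping are identical.
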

\begin{proof}
  This can be obtained by a direct calculation.  Note that the time
  derivative of $\int |\ueps|^2$ vanishes because it is conserved.
  \begin{equation*}
    \begin{split}
      \frac{d}{dt}\|\ueps\|_{H^2}^2 &= \int -\moll \bar u_{\vareps,xxxx} \abs{\moll \ueps}^{2\sigma} \moll u_{\vareps,x} + \cc\\
      &  = \int \moll \bar u_{\vareps,xxx} \left(\abs{\moll \ueps}^{2\sigma} \moll u_{\vareps,x}\right)_x + \cc\\
      & = \int \left(\abs{ \moll u_{\vareps,xx}}^2\right)_x\abs{\moll \ueps}^{2\sigma}\\
      & \quad + \int \moll \bar u_{\vareps,xxx} \moll u_{\vareps,x} \left(\abs{\moll \ueps}^{2\sigma} \right)_x + \cc\\
      & = -3\underbrace{\int\abs{ \moll
          u_{\vareps,xx}}^2\paren{\abs{\moll
            \ueps}^{2\sigma}}_x}_{I_1}
      -\underbrace{\int\paren{\abs{\moll
            u_{\vareps,x}}^2}_x \paren{\abs{\moll
            \ueps}^{2\sigma}}_{xx}}_{I_2}
    \end{split}
  \end{equation*}
  For the first term,
  \begin{equation*}
    \begin{split}
      I_1 & = \int \abs{ \moll u_{\vareps,xx}}^2\paren{ \sigma\abs{\moll \ueps}^{2\sigma-2} \moll \bar \ueps \moll u_{\vareps,x} + \cc}\\
      & \leq  2\sigma \norm{\moll \ueps}_{L^\infty}^{2\sigma-1} \norm{\moll u_{\vareps,x}}_{L^\infty} \norm{\moll u_{\vareps,xx}}_{L^2}^2\\
      & \leq 2\sigma \norm{\ueps}_{H^1}^{2\sigma-1}
      \norm{\ueps}_{H^2}^3\leq 2 \sigma \norm{\ueps}_{H^2}^{2\sigma
        +2}
    \end{split}
  \end{equation*}
  For the other term,
  \begin{equation*}
    \begin{split}
      I_2 & =\int \paren{\moll \bar u_{\vareps,x}\moll u_{\vareps,xx} +\cc }\paren{\sigma  \moll \bar \ueps^\sigma \moll \ueps^{\sigma-1} \moll u_{\vareps,x} + \cc  }_x\\
      & = \int \paren{\moll \bar u_{\vareps,x}\moll u_{\vareps,xx} +\cc }\\
      &\quad \quad \times \paren{\sigma^2  \abs{\moll \bar \ueps}^{2\sigma-2} \abs{ \moll u_{\vareps,x}}^2 +\sigma(\sigma-1) \moll \bar \ueps^\sigma \moll \ueps^{\sigma-2} \moll u_{\vareps,x}^2 + \sigma  \moll \bar \ueps^\sigma \moll \ueps^{\sigma-1} \moll u_{\vareps,xx}  + \cc  }\\
      &\leq  4 \sigma^2 \norm{\moll  \ueps}_{L^\infty}^{2\sigma-2} \norm{\moll u_{\vareps,x}}_{L^\infty}^2\norm{\moll u_{\vareps,x}}_{L^2}\norm{\moll u_{\vareps,xx}}_{L^2}\\
      &\quad +4 \sigma(\sigma-1)  \norm{\moll \ueps}_{L^\infty}^{2\sigma-2} \norm{\moll u_{\vareps,x}}_{L^\infty}^2\norm{\moll u_{\vareps,x}}_{L^2}\norm{\moll u_{\vareps,xx}}_{L^2}\\
      &\quad + 4\sigma \norm{\moll \bar \ueps}_{L^\infty}^{2\sigma-1}  \norm{\moll u_{\vareps,x}}_{L^\infty}\norm{\moll u_{\vareps,xx}}_{L^2}^2\\
      &\leq (8\sigma^2-4\sigma) \norm{\ueps}_{H^1}^{2\sigma-1} \norm{\ueps}_{H^2}^3 + 4\sigma  \norm{\ueps}_{H^1}^{2\sigma-1} \norm{\ueps}_{H^2}^3 \\
      &\leq 8\sigma^2 \norm{\ueps}_{H^2}^{2\sigma+2}
    \end{split}
  \end{equation*}
  Consequently,
  \begin{equation*}
    \frac{d}{dt}\|\ueps\|_{H^2}^2 \leq \paren{6\sigma + 8\sigma^2}\norm{\ueps}_{H^2}^{2\sigma+2}.
  \end{equation*}
\end{proof}
We are now assured that these solutions cannot blow up arbitrarily
fast so there exists a $T$, independent of $\vareps$, for which the
solutions must exist.

\noindent{\bf Step 3:} Passage to the Limit

As before, we are able to pass the the limit by using the Aubin-Lions
Lemma.  We find $u_{\varepsilon}$ converges (along a subsequence) to
$u\in L^{2}(0,T; L^{2}).$ In the same way as before, we are able to
further conclude that $u\in L^{\infty}(0,T; H^{2}),$ and
$u_{\varepsilon}$ converges to $u$ in $L^{2}(0,T; H^{s'}),$ for any
$s'\in [0,2).$ Following the same arguments as before, we are able to
conclude that $u$ is a mild solution of (\ref{e:gdnls}), i.e., $u$
satisfies (\ref{duhamelHolds}).  Lemma \ref {l:h1cont_in_time} also
generalizes, allowing us to infer $u \in C(0,T;H^{s})$ for all $0\leq
s<2$.  In fact, we will show below that the solution is actually
continuous in time in $H^2$.

In the present setting, we can obtain differentiability in time:
\begin{lemma}
  \label{l:c1}
  Assume $u\in L^\infty(0,T;H^2)\cap C(0,T;H^{1})$ is a mild solution.
  Then $u_t\in L^{\infty}(0,T;L^2)$ and $u_t= i u_{xx} -
  \abs{u}^{2\sigma} u_x$.
\end{lemma}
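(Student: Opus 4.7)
The plan is to differentiate the Duhamel formula \eqref{e:mild_soln} term by term, using crucially that $u_0 \in H^2$ and $u \in L^\infty(0,T;H^2)$. First, I observe that the nonlinear term $G(s) \equiv \abs{u(s)}^{2\sigma} u_x(s)$ satisfies $G \in L^\infty(0,T;L^2)$: since $H^1 \hookrightarrow L^\infty$ in one spatial dimension,
\begin{equation*}
\norm{G(s)}_{L^2} \leq \norm{u(s)}_{L^\infty}^{2\sigma} \norm{u_x(s)}_{L^2} \leq c \norm{u(s)}_{H^1}^{2\sigma+1}.
\end{equation*}
Moreover, because $u \in C(0,T;H^1)$ and $\abs{z}^{2\sigma}$ is locally Lipschitz, $G \in C(0,T;L^2)$.

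For the homogeneous part of \eqref{e:mild_soln}, since $u_0 \in H^2$, standard semigroup theory gives $\frac{d}{dt} e^{i\partial_x^2 t} u_0 = i e^{i\partial_x^2 t} u_{0,xx}$ in $L^2$, with norm bounded uniformly in $t$. For the Duhamel integral $v(t) \equiv \int_0^t e^{i\partial_x^2 (t-s)} G(s)\, ds$, I use the standard splitting
\begin{equation*}
\frac{v(t+h) - v(t)}{h} = \frac{1}{h}\int_t^{t+h} e^{i\partial_x^2 (t+h-s)} G(s)\, ds + \frac{e^{i\partial_x^2 h} - I}{h} v(t).
\end{equation*}
The first summand converges to $G(t)$ in $L^2$ by continuity of $G$ together with strong continuity of the Schr\"odinger group. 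For the second summand, I exploit the mild-solution identity $v(t) = e^{i\partial_x^2 t} u_0 - u(t)$, which shows that $v(t) \in H^2$ for a.e. $t \in [0,T]$, since $e^{i\partial_x^2 t} u_0 \in H^2$ for all $t$ and $u \in L^\infty(0,T;H^2)$. At such times, $\frac{1}{h}\paren{e^{i\partial_x^2 h} - I} v(t) \to i\partial_x^2 v(t)$ in $L^2$.

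Combining these limits, for a.e. $t \in (0,T)$ the derivative $u_t(t)$ exists in $L^2$ and
\begin{equation*}
u_t(t) = i e^{i\partial_x^2 t} u_{0,xx} - G(t) - i\partial_x^2 v(t) = i u_{xx}(t) - \abs{u(t)}^{2\sigma} u_x(t),
\end{equation*}
where the last equality uses $i\partial_x^2 v(t) = i\partial_x^2 e^{i\partial_x^2 t} u_0 - i u_{xx}(t)$. The pointwise estimate $\norm{u_t(t)}_{L^2} \leq \norm{u_{xx}(t)}_{L^2} + \norm{G(t)}_{L^2}$, combined with $u \in L^\infty(0,T;H^2)$, then gives $u_t \in L^\infty(0,T;L^2)$.

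The main obstacle is justifying differentiation of $v$: a priori one only has $v \in C(0,T;L^2)$, and to get $\frac{1}{h}\paren{e^{i\partial_x^2 h} - I} v(t) \to i\partial_x^2 v(t)$ in $L^2$ one needs $v(t)$ to lie in the domain $H^2$ of the generator. Proving this directly from the definition of $v$ would be circular; the key device is to use the mild-solution identity itself to transfer the $H^2$ regularity of $u(t)$ and $e^{i\partial_x^2 t} u_0$ onto $v(t)$. This succeeds only for almost every $t$, but that is exactly what the conclusion $u_t \in L^\infty(0,T;L^2)$, as opposed to continuity, accommodates.
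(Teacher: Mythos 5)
Your argument is correct and follows essentially the same route as the paper: both differentiate the mild formulation using the strong continuity of the Schr\"odinger group, the continuity of $s\mapsto \abs{u}^{2\sigma}u_x(s)$ into $L^2$, and the fact that $u(t)\in H^2$ places the relevant quantity in the domain of the generator. The only cosmetic difference is that the paper restarts the Duhamel formula at time $t$ and applies the expansion $e^{ih\partial_x^2}-I-ih\partial_x^2$ directly to $u(t)$, whereas you keep the formula anchored at $t=0$ and transfer the $H^2$ regularity to the integral term via $v(t)=e^{i\partial_x^2 t}u_0-u(t)$; after cancellation the two computations coincide.
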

\begin{proof}
  We begin by writing
  \begin{equation*}
    \begin{split}
      &\norm{h^{-1}\paren{u(t+h) - u(t)} - \paren{\abs{u}^{2\sigma}u
          -iu_{xx}}}_{L^2}\\
      &\leq \norm{h^{-1}\paren{e^{ih \partial_x^2} - I -
          ih \partial_x^2}
        u(t)}_{L^2}\\
      &\quad + \norm{h^{-1} \int_t^{t+h}e^{i\partial_x^2(t+h -s) }
        \abs{u}^{2\sigma}u_x ds - \abs{u}^{2\sigma} u_x }_{L^2}
    \end{split}
  \end{equation*}
  Since $s\mapsto (\abs{u}^{2\sigma}u_x)(s)$ is a continuous mapping
  into $L^2$, so is $s\mapsto e^{i\partial_x^2(t+h -s)}
  (\abs{u}^{2\sigma}u_x)(s)$.  Therefore, by the mean value theorem
  for integrals,
  \begin{equation*}
    h^{-1} \int_t^{t+h}e^{i\partial_x^2(t+h -s) }
    \abs{u}^{2\sigma}u_x ds = e^{i\partial_x^2(t+h -\tilde t)} (\abs{u}^{2\sigma}u_x)(\tilde t)
  \end{equation*}
  for some $t\leq \tilde t \leq t+h$.  Hence,
  \begin{equation*}
    \begin{split}
      & \norm{h^{-1}\paren{u(t+h) - u(t)} - \paren{\abs{u}^{2\sigma}u
          -iu_{xx}}}_{L^2}\\
      &\leq \norm{h^{-1}\paren{e^{ih \partial_x^2} - I -
          ih \partial_x^2}u(t)}_{L^2} + \norm{e^{i\partial_x^2(t+h
          -\tilde t)} (\abs{u}^{2\sigma}u_x)(\tilde t) -
        (\abs{u}^{2\sigma}u_x)(t) }_{L^2}.
    \end{split}
  \end{equation*}
  By the continuity of $u$ and the properties of the Schr\"odinger
  semigroup, this vanishes as $h\to 0$.  Thus, $u_t$ exists and $u_t =
  i u_{xx} - \abs{u}^{2\sigma} u_x$.  Again, because of the regularity
  of $u$, $u_t \in L^{\infty}(0,T;L^2)$.
\end{proof}
We note that when we prove below that $u\in C(0,T;H^{2}),$ this will
demonstrate that in fact $u_{t}\in C(0,T;L^{2}).$

\begin{lemma}
  \label{l:uniqueness}
  If $u \in L^\infty(0,T;H^2)\cap C(0,T;H^1)$ is mild solution, it is
  unique.
\end{lemma}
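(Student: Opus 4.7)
The plan is to suppose $u, v \in L^\infty(0,T;H^2)\cap C(0,T;H^1)$ are two mild solutions with the same initial data, set $w = u - v$, and derive a Gronwall inequality for $\norm{w(t)}_{L^2}^2$. By Lemma \ref{l:c1}, each of $u, v$ satisfies the pointwise evolution $u_t = iu_{xx} - \abs{u}^{2\sigma}u_x$ with $u_t \in L^\infty(0,T;L^2)$, so subtracting gives
\begin{equation*}
w_t = iw_{xx} - \paren{\abs{u}^{2\sigma}u_x - \abs{v}^{2\sigma}v_x}.
\end{equation*}
Because $w \in C(0,T;L^2)$ with $w_t \in L^\infty(0,T;L^2)$, the scalar function $t \mapsto \norm{w(t)}_{L^2}^2$ is absolutely continuous, and its derivative equals $2\,\mathrm{Re}\,\mean{w, w_t}_{L^2}$ for almost every $t$.

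Next, the linear piece contributes nothing: since $w(\cdot,t) \in H^2$ for a.e.\ $t$, integration by parts yields
\begin{equation*}
2\,\mathrm{Re}\int \bar w \cdot iw_{xx}\,dx = 2\,\mathrm{Re}\paren{-i\int \abs{w_x}^2\,dx} = 0.
\end{equation*}
For the nonlinear piece, I decompose
\begin{equation*}
\abs{u}^{2\sigma}u_x - \abs{v}^{2\sigma}v_x = \abs{u}^{2\sigma}\, w_x + \paren{\abs{u}^{2\sigma} - \abs{v}^{2\sigma}} v_x.
\end{equation*}
An integration by parts handles the first term:
\begin{equation*}
-2\,\mathrm{Re}\int \bar w\, \abs{u}^{2\sigma} w_x\,dx = -\int \abs{u}^{2\sigma}\partial_x \abs{w}^2\,dx = \int \partial_x\paren{\abs{u}^{2\sigma}}\abs{w}^2\,dx,
\end{equation*}
which is controlled by $\norm{\partial_x(\abs{u}^{2\sigma})}_{L^\infty}\norm{w}_{L^2}^2$. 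For the second term, the key pointwise estimate is
\begin{equation*}
\abs{\,\abs{u}^{2\sigma} - \abs{v}^{2\sigma}\,} \leq 2\sigma\,\max\set{\abs{u},\abs{v}}^{2\sigma-1}\abs{w},
\end{equation*}
so its contribution is bounded by $C\paren{\norm{u}_{L^\infty}+\norm{v}_{L^\infty}}^{2\sigma-1}\norm{v_x}_{L^\infty}\norm{w}_{L^2}^2$.

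Invoking the Sobolev embedding $H^2(\T) \hookrightarrow C^1(\T)$ together with $u, v \in L^\infty(0,T;H^2)$, the coefficients appearing above are uniformly bounded on $[0,T]$ by some constant $M$, giving $\frac{d}{dt}\norm{w(t)}_{L^2}^2 \leq M\norm{w(t)}_{L^2}^2$. Since $w(0)=0$, Gronwall's inequality forces $w \equiv 0$. The only point requiring care is the Lipschitz-type bound on $\abs{z}^{2\sigma}$ for non-integer $\sigma$; here the hypothesis $\sigma \geq 1$ is exactly what is needed, since then $z\mapsto \abs{z}^{2\sigma}$ is $C^1$ on $\C$ with gradient of size $O(\abs{z}^{2\sigma-1})$, so the pointwise bound holds with no singular behavior at the origin and with the factor $\max\set{\abs{u},\abs{v}}^{2\sigma-1}$ kept finite by the $H^2$ regularity of both solutions.
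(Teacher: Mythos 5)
Your proposal is correct and follows essentially the same route as the paper: invoke Lemma \ref{l:c1} to pass to the strong form of the equation, run an $L^2$ energy estimate on the difference in which the linear term cancels by integration by parts, split the nonlinear difference so that one piece is handled by integrating the derivative off $w_x$ and the other by the Lipschitz-type bound $\abs{\abs{u}^{2\sigma}-\abs{v}^{2\sigma}}\leq 2\sigma\max\set{\abs{u},\abs{v}}^{2\sigma-1}\abs{u-v}$, and close with Gronwall. The only differences are cosmetic (you attach $\abs{u}^{2\sigma}$ rather than $\abs{v}^{2\sigma}$ to the $w_x$ term, and you spell out the absolute continuity of $t\mapsto\norm{w(t)}_{L^2}^2$ and the role of $\sigma\geq 1$ more explicitly than the paper does).
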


\begin{proof}
  This follows a Gronwall inequality, and Lemma \ref{l:c1} which says
  $u$ solves \eqref{e:gdnls} in the strong sense.  Indeed, if $u$ and
  $v$ are two solutions, then
  \begin{equation*}
    \begin{split}
      \frac{d}{dt}\norm{u-v}_{L^2}^2& = \int (\bar u - \bar v) (u_t -
      v_t) + \cc\\
      &  = \int (\bar u - \bar v) (-i u_{xx} +i v_{xx}) +\cc\\
      &\quad + \int (\bar u - \bar v) (\abs{u}^{2\sigma} u_x -
      \abs{v}^{2\sigma} v_x) +\cc
    \end{split}
  \end{equation*}
  The first integral vanishes after integrating by parts and adding
  the complex conjugate.  The second integral will be bounded as
  follows:
  \begin{equation*}
    \begin{split}
      &\int (\bar u - \bar v) (\abs{u}^{2\sigma} u_x -
      \abs{v}^{2\sigma} v_x) +\cc\\
      & = \int (\bar u - \bar v) (\abs{u}^{2\sigma} -
      \abs{v}^{2\sigma})u_x + \cc + \int (\bar u - \bar v)
      \abs{v}^{2\sigma} (u_x -
      v_x) + \cc\\
      &\leq \int 4 \sigma\max\set{\abs{u},\abs{v}}^{2\sigma -1}
      \abs{u-v}^2 \abs{u_x} +\int 2\sigma
      \abs{v}^{2\sigma-1}\abs{v_x}\abs{u-v}^2 \\
      &\leq C(\norm{u}_{L_t^\infty H^2_x}, \norm{v}_{L_t^\infty
        H^2_x}) \norm{u-v}_{L^2}^2
    \end{split}
  \end{equation*}
  Therefore, by Gronwall
  \begin{equation*}
    \norm{(u-v)(\cdot,t)}_{L^2}^2 \leq  \norm{(u-v)(\cdot,0)}_{L^2}^2 e^{C t}.
  \end{equation*}
  If the solutions agree at $t=0$, they agree for all time.
\end{proof}

\begin{cor}
  \label{l:contdep}
  If $u,v \in L^\infty(0,T;H^2)\cap C(0,T;H^1)$ are two mild
  solutions, then
  \begin{equation*}
    \norm{(u-v)(\cdot,t)}_{H^{s'}} \leq K e^{C t} \norm{(u-v)(\cdot,0)}_{L^2}^{1-s'/2}
  \end{equation*}
  for any $0\leq s'<2$.
\end{cor}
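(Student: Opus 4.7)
The plan is to combine the $L^2$ Gronwall estimate that is already implicit in the proof of Lemma \ref{l:uniqueness} with the Sobolev interpolation inequality of Lemma \ref{interp}. The key observation is that the computation in the uniqueness proof does not actually use that $u$ and $v$ agree at $t=0$; it only uses the fact that both functions solve \eqref{e:gdnls} in the strong sense (which is legitimate by Lemma \ref{l:c1}) together with their uniform $H^2$ bound. Hence that computation really produces the unconditional estimate
\begin{equation*}
\norm{(u-v)(\cdot,t)}_{L^2}^2 \leq e^{Ct}\norm{(u-v)(\cdot,0)}_{L^2}^2,
\end{equation*}
with $C = C(\norm{u}_{L^\infty_t H^2_x}, \norm{v}_{L^\infty_t H^2_x})$.

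Second, I would record the trivial uniform bound that follows from $u,v \in L^\infty(0,T;H^2)$: setting $M := \norm{u}_{L^\infty_t H^2_x} + \norm{v}_{L^\infty_t H^2_x}$, one has $\norm{(u-v)(\cdot,t)}_{H^2} \leq M$ for every $t \in [0,T]$.

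Third, I would apply Lemma \ref{interp} to the difference $f = (u-v)(\cdot,t)$ with $\ell = 2$ and $m = s'$, which is legitimate since $0\leq s' < 2 \leq \ell$, yielding
\begin{equation*}
\norm{(u-v)(\cdot,t)}_{H^{s'}} \leq c\,\norm{(u-v)(\cdot,t)}_{H^2}^{s'/2}\,\norm{(u-v)(\cdot,t)}_{L^2}^{1-s'/2}.
\end{equation*}
Substituting the $H^2$ bound $M$ and the $L^2$ Gronwall estimate gives
\begin{equation*}
\norm{(u-v)(\cdot,t)}_{H^{s'}} \leq c\,M^{s'/2}\,e^{(1-s'/2)Ct/2}\,\norm{(u-v)(\cdot,0)}_{L^2}^{1-s'/2},
\end{equation*}
and setting $K := c\,M^{s'/2}$ and renaming the rate $(1-s'/2)C/2$ as $C$ produces the claimed inequality.

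There is no real obstacle: all the analytic work was already carried out in establishing uniqueness, and the interpolation lemma serves only to promote the $L^2$ control into $H^{s'}$ control. The only point meriting care is that the exponent $1-s'/2$ is strictly positive precisely because $s'<2$, so this argument does not give continuous dependence in the full $H^2$ norm; recovering that endpoint would require a separate argument, which is why the statement is phrased with $s' < 2$.
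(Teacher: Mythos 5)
Your proposal is correct and follows essentially the same route as the paper: the Gronwall computation from the uniqueness lemma (which, as you note, never uses agreement of the data at $t=0$) gives the $L^2$ bound, and Lemma \ref{interp} with $\ell=2$, $m=s'$ together with the uniform $L^\infty_t H^2_x$ bound promotes it to $H^{s'}$. The only difference from the paper's version is cosmetic bookkeeping of the constant in the exponent.
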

\begin{proof}
  Following the same scheme as in Lemma \ref{l:uniqueness}, we
  immediately obtain
  \begin{equation*}
    \norm{(u-v)(\cdot,t)}_{L^2} \leq e^{C t} \norm{(u-v)(\cdot,0)}_{L^2}.
  \end{equation*}
  Using the interpolation estimate, Lemma \ref{interp},
  \begin{equation*}
    \begin{split}
      \norm{(u-v)(\cdot,t)}_{H^{s'}}&\leq c
      \norm{(u-v)(\cdot,t)}_{H^{2}}^{s'/2}\norm{(u-v)(\cdot,t)}_{L^2}^{1-s'/2}\\
      & \leq c \paren{\norm{u}_{L^\infty_t H^2_x}
        +\norm{v}_{L^\infty_t H^2_x}}^{s'/2} e^{C(1-s'/2) t}
      \norm{(u-v)(\cdot,0)}_{L^2}^{1-s'/2}
    \end{split}
  \end{equation*}
\end{proof}

Having proved existence, uniqueness, and continuous dependence of
solutions, we will conclude this section with a little more on the
regularity of the $H^{2}$ solutions.  In particular, we will now prove
that the solutions are in $C([0,T];H^{2});$ proving this will complete
the proof of Theorem \ref{t:h2}.  We are attempting to show that for
any $t\in[0,T],$
$$\lim_{s\rightarrow t}\|u(\cdot,s)-u(\cdot,t)\|_{H^{2}}^{2}=0,$$
and the limit is understood to be one sided at $t=0$ and $t=T$.  We
can rewrite this as
$$\lim_{s\rightarrow t}\|u(\cdot,s)\|_{H^{2}}^{2}-2
\lim_{s\rightarrow t}\langle u(\cdot,s), u(\cdot,t)\rangle_{H^{2}} =
-\|u(\cdot,t)\|_{H^{2}}^{2}.$$ This will follow from the following two
items:
\begin{equation}\label{normConv}
  \lim_{s\rightarrow t}\|u(\cdot,s)\|_{H^{2}}^{2}=\|u(\cdot,t)\|_{H^{2}}^{2},
\end{equation}
\begin{equation}\label{weakConv}
  \lim_{s\rightarrow t}\langle u(\cdot,s), u(\cdot,t)\rangle_{H^{2}} = \|u(\cdot,t)\|_{H^{2}}^{2}.
\end{equation}
Of these, (\ref{weakConv}) follows from Theorem
\ref{weakTheoremContTime} with $k=2.$

To establish \eqref{normConv}, we begin by proving that the norm is
right-continuous at $t=0:$
$$\lim_{t\rightarrow 0^{+}}\|u(\cdot,t)\|_{H^{2}}^{2}=\|u_{0}\|_{H^{2}}^{2}.$$
Note that we have already shown that for all $\tau\in[0,T),$
$u(\cdot,t)\overset{{H^{2}}}{\rightharpoonup} u(\cdot,\tau)$ as
$t\rightarrow\tau^{+}.$ This implies
\begin{equation}\label{tauLimInf}
  \|u(\cdot,\tau)\|_{H^{2}}^{2}\leq \liminf_{t\rightarrow\tau^{+}}\|u(\cdot,t)\|_{H^{2}}^{2},\qquad \forall \tau\in[0,T).
\end{equation}
Evaluating \eqref{tauLimInf} at $\tau=0,$
$$\|u_{0}\|_{H^{2}}^{2}\leq \liminf_{t\rightarrow 0^{+}}\|u(\cdot,t)\|_{H^{2}}^{2}.$$
To complete the argument, we will use the energy inequality to show
that
\begin{equation}\label{limitSuperior}
  \limsup_{t\rightarrow 0^{+}}\|u(\cdot,t)\|_{H^{2}}^{2}\leq \|u_{0}\|_{H^{2}}^{2}.
\end{equation}
The energy inequality, established above, is
$$\frac{d}{dt}\|u_{\varepsilon}\|_{H^{2}}^{2}\leq c(\|u_{\varepsilon}\|_{H^{2}}^{2})^{\sigma+1}.$$
As above, we let $K$ be an upper bound for
$\|u_{\varepsilon}\|_{H^{2}}^{2},$ for all $\varepsilon\in(0,1],$ on
the time interval $[0,T].$ Then, the energy inequality becomes
$$\frac{d}{dt}\|u_{\varepsilon}\|_{H^{2}}^{2}\leq cK^{\sigma+1}.$$
Integrating this with respect to time, we get
$$\|u_{\varepsilon}(\cdot,t)\|_{H^{2}}^{2}-\|u_{0}\|_{H^{2}}^{2}\leq cK^{\sigma+1}t.$$
As we have established $u_{\varepsilon}\overset{H^2}{\rightharpoonup}
u$ for almost every time, we get
\begin{equation}\label{forLimSupAE}
  \|u(\cdot,t)\|_{H^{2}}^{2}-\|u_{0}\|_{H^{2}}^{2}\leq cK^{\sigma+1}t,\qquad \mathrm{a.e.\ }t\in[0,T).
\end{equation}
So \eqref{forLimSupAE} may not hold for values of $t$ in a set of
measure zero.  Now, let $\tau\in[0,T).$ We use (\ref{tauLimInf});
since a set of measure zero contains no interval, there must be a
sequence of times approaching $\tau$ from above for which we can use
the inequality of (\ref{forLimSupAE}).  This implies
$$\|u(\cdot,\tau)\|_{H^{2}}^{2}\leq \liminf_{t\rightarrow\tau^{+}}
\left\{\|u_{0}\|_{H^{2}}^{2}+cK^{\sigma+1}t\right\}=\|u_{0}\|_{H^{2}}^{2}+cK^{\sigma+1}\tau.$$
We see then that the inequality of (\ref{forLimSupAE}) actually holds
for all $t\in[0,T).$ Taking the limit superior as $t\rightarrow 0^{+}$
of both sides of the inequality of (\ref{forLimSupAE}), we conclude
that (\ref{limitSuperior}) holds.

This argument can then be repeated to demonstrate right-continuity of
the norm at any $t\in(0,T).$ This implies right-continuity of the norm
on $[0,T);$ we then need to establish left-continuity of the norm on
$(0,T].$ This, however, is actually the same argument, if we reverse
time.  We now briefly discuss left-continuity at $t=T,$ but
left-continuity at times in $(0,T)$ is similar.  Let $s$ be a
time-like variable, and consider the initial value problem
$$-iv_{s}+i|v|^{2\sigma}v_{x}+v_{xx}=0,\qquad v(\cdot,0)=u(T).$$  The same
estimates can be made for $v$ as we have made for $u,$ to find
existence of a solution $v$ on some time interval.  We can then repeat
the above argument to show that the norm of this solution is
right-continuous at $s=0.$ By uniqueness of solutions, though, we must
have $v(\cdot,s)=u(\cdot,T-s).$ This implies that the norm of $u$ is
left-continuous at $t=T.$ This complete the proof of Theorem
\ref{t:h2}.

As a final note on our $H^{2}$ solutions, we remark that they conserve
the Hamiltonian.
\begin{lemma}
  \label{l:hamiltonian}
  If $u \in C(0,T;H^2)\cap C(0,T;H^1)$ is mild solution, it conserves
  the Hamiltonian, \eqref{e:hamiltonian}.
\end{lemma}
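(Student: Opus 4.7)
The plan is to follow the template of Lemma \ref{l:hamiltonian2}, but to exploit the stronger compactness available in the $H^2$ setting to upgrade the inequality $\mathcal{H}[u(t)]\leq \mathcal{H}[u_0]$ to the equality $\mathcal{H}[u(t)]=\mathcal{H}[u_0]$. Write $\mathcal{H}=\mathcal{K}+\mathcal{V}$ as in the proof of Lemma \ref{l:hamiltonian2}, and similarly $\mathcal{H}_{\varepsilon}=\mathcal{K}+\mathcal{V}_{\varepsilon}$. By Lemma \ref{hamiltonianEstimate}, the mollified solutions satisfy
\[
\mathcal{K}[u_{\varepsilon}(t)]+\mathcal{V}_{\varepsilon}[u_{\varepsilon}(t)]
=\mathcal{K}[u_{0}]+\mathcal{V}_{\varepsilon}[u_{0}],
\]
so the strategy is to pass to the limit $\varepsilon\to 0^{+}$ in each of the four terms.

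The right-hand side is easy: since $\moll u_{0}\to u_{0}$ in $H^{1}$ by \eqref{mollifierLimit}, the calculation bounding $|\mathcal{V}_{\varepsilon}[u_{\varepsilon}]-\mathcal{V}[u]|$ in Lemma \ref{l:hamiltonian2} (applied with $u_{\varepsilon}$ replaced by $u_{0}$ and $v_{\varepsilon}$ by $\moll u_{0}$) shows $\mathcal{V}_{\varepsilon}[u_{0}]\to \mathcal{V}[u_{0}]$, hence $\mathcal{H}_{\varepsilon}[u_{0}]\to \mathcal{H}[u_{0}]$. For the left-hand side, the key new ingredient is that in the $H^{2}$ setting of Section \ref{H2Solutions} the Aubin--Lions step yielded $u_{\varepsilon}\to u$ in $L^{2}(0,T;H^{s'})$ for every $s'<2$. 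Choosing $s'=1$ and passing to a subsequence, for almost every $t\in[0,T]$ one has $u_{\varepsilon}(\cdot,t)\to u(\cdot,t)$ strongly in $H^{1}$. This strong $H^{1}$ convergence immediately gives
\[
\mathcal{K}[u_{\varepsilon}(t)]=\|u_{\varepsilon,x}(t)\|_{L^{2}}^{2}\longrightarrow \|u_{x}(t)\|_{L^{2}}^{2}=\mathcal{K}[u(t)],
\]
and the same computation used in Lemma \ref{l:hamiltonian2} gives $\mathcal{V}_{\varepsilon}[u_{\varepsilon}(t)]\to \mathcal{V}[u(t)]$. Consequently $\mathcal{H}[u(t)]=\mathcal{H}[u_{0}]$ for almost every $t\in[0,T]$.

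To upgrade from almost every $t$ to every $t$, I invoke the continuity of $\mathcal{H}$ with respect to the $H^{1}$ topology. The map $w\mapsto \mathcal{H}[w]$ is continuous on $H^{1}$: the kinetic part is obviously continuous, and the potential part is controlled by the bound
\[
\bigl|\mathcal{V}[w_{1}]-\mathcal{V}[w_{2}]\bigr|\leq C\bigl(\|w_{1}\|_{H^{1}}^{2\sigma+1}+\|w_{2}\|_{H^{1}}^{2\sigma+1}\bigr)\|w_{1}-w_{2}\|_{H^{1}},
\]
which follows from the same Hölder/Sobolev manipulations as in Lemma \ref{l:hamiltonian2} together with $H^{1}\hookrightarrow L^{\infty}$. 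Since $u\in C(0,T;H^{2})\subset C(0,T;H^{1})$, the map $t\mapsto \mathcal{H}[u(t)]$ is continuous on $[0,T]$, so the almost-everywhere equality $\mathcal{H}[u(t)]=\mathcal{H}[u_{0}]$ extends to all $t\in[0,T]$.

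The main obstacle in this argument, relative to the $H^{1}$ proof of Lemma \ref{l:hamiltonian2}, was the convergence of the kinetic term: there only weak $H^{1}$ convergence was available, producing just lower semicontinuity and hence an inequality. Here that obstacle is dissolved by the strengthened convergence in $L^{2}(0,T;H^{s'})$ for $s'<2$, which is the payoff for assuming $u_{0}\in H^{2}$. An alternative route would be to differentiate $\mathcal{H}[u(t)]$ directly, using $u\in C(0,T;H^{2})$ and $u_{t}\in C(0,T;L^{2})$ from Lemma \ref{l:c1} and mimicking the cancellation computation of Lemma \ref{hamiltonianEstimate} without mollifiers; but the limiting approach above is cleaner because it reuses machinery already developed.
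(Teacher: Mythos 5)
Your proof is correct, but it takes a genuinely different route from the paper's. The paper's proof is a one-line appeal to a direct calculation: by Lemma \ref{l:c1} the mild solution is a strong solution with $u_t = iu_{xx}-\abs{u}^{2\sigma}u_x$ in $L^2$, so one differentiates $\mathcal{H}[u(t)]$ in time, substitutes the equation, and observes the same cancellation as in Lemma \ref{hamiltonianEstimate} with all mollifiers removed --- exactly the ``alternative route'' you mention and set aside at the end. Your argument instead passes to the limit in the exact conservation law $\mathcal{H}_{\varepsilon}[u_{\varepsilon}(t)]=\mathcal{H}_{\varepsilon}[u_0]$ for the mollified flow, using the upgraded compactness $u_{\varepsilon}\to u$ in $L^2(0,T;H^{s'})$ for $s'<2$ to get strong $H^1$ convergence at almost every $t$, which converts the lower-semicontinuity inequality of Lemma \ref{l:hamiltonian2} into an equality, and then uses continuity of $\mathcal{H}$ on $H^1$ together with $u\in C(0,T;H^1)$ to extend from almost every $t$ to every $t$. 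This is sound, and it has the virtue of reusing the machinery of Lemma \ref{l:hamiltonian2} rather than redoing the cancellation computation. One point you should make explicit: your limiting argument applies only to the solution \emph{constructed} by the mollification scheme, whereas the lemma is stated for an arbitrary mild solution in $C(0,T;H^2)\cap C(0,T;H^1)$; you need to invoke Lemma \ref{l:uniqueness} to identify any such mild solution with the constructed one before your argument covers the stated claim. The paper's direct-differentiation proof does not need this step, since it works with any strong solution directly; that, together with brevity, is what the paper's approach buys, while yours buys a proof that avoids manipulating $u_t$ and the non-integer-power cancellations a second time.
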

\begin{proof}
  This follows by direct calculation, using Lemma \ref{l:c1}, which
  shows that the equality in \eqref{e:gdnls} holds in the sense of
  $L^2$ for almost all $t$.
\end{proof}

\section{Discussion}

We have demonstrated the existence of local-in-time mild solutions in
$H^1$, along with local in time existence and uniqueness in $H^2$.
Taking the initial data in $H^2$ also allows us to obtain continuous
dependence on the data, and continuity in time of solutions in $H^2$.
We were, unfortunately, unable to show that the $H^1$ solutions
belonged to $C(0,T;H^1).$ Our $H^2$ solutions are a step towards the
justifying the time dependent simulations appearing in
\cite{Liu:2013cq,Liu:2013ej}.

\subsection{Remarks on Our Construction}

An interesting feature of our $H^1$ result is the use of the
Hamiltonian in the construction of the energy functional,
\eqref{e:energy}.  Indeed, the $H^1$ norm is inadequate for the energy
method as $\frac{d}{dt}\|u_{x}\|_{L^{2}}^{2},$ after integration by
parts, is cubic in $u_{x},$ and thus cannot be bounded in terms of the
$H^1$ norm.  We are able to deal with this by using the conservation
of the Hamiltonian.  While control of the Hamiltonian itself does not
give control of a norm, we are able to add lower-order terms to it to
be able to control the $H^{1}$ norm, and we are able to estimate the
growth of these lower-order terms.

We are not able to prove uniqueness of the $H^{1}$ solutions we
construct.  We are aware of two main approaches to proving uniqueness
of solutions.  Uniqueness can come from the process of construction of
solutions, if the method used is, for instance, to prove existence of
a fixed point via the contraction mapping principle.  Otherwise, we
can make an estimate for the difference of two solutions (this is what
we did for our $H^{2}$ uniqueness theorem).  This estimate, however,
requires the equation (\ref{e:gdnls}) to be satisfied in a strong
sense.  Since our $H^{1}$ solutions are only mild solutions, this
approach is also not accessible to us; we were able to use it for our
$H^{2}$ solutions because of Lemma \ref{l:c1}.

\subsection{Implications of Greater Regularity in Time}
\label{towardsGlobal}

Were our $H^1$ solutions continuous in time into $H^1$, two further
things could be accomplished.  First, it is a prerequisite for the
notion of weak solution applied in
\cite{Grillakis:1987hj,Grillakis:1990jv}; a weak notion of
differentiability, \eqref{e:weak_soln}, is also required.  Thus, the
existence framework needed to fully justify the results of
\cite{Liu:2013cq} remains unresolved.

Second, if we did have continuity in time into $H^1$, we would be able
to conclude global-in-time existence for solutions with sufficiently
small data.  In this case, an {\it a priori} bound could be
constructed using the mass and Hamiltonian.  To demonstrate this, we begin with the
following estimate:
\begin{equation}
  \label{e:invariant_bound}
  \begin{split}
    \mathcal{M}+\mathcal{H}&\geq \norm{u}_{H^1}^2
    -\frac{1}{\sigma+1}\int
    \abs{u}^{2\sigma+1} \abs{u_x}\\
    &\geq \norm{u}_{H^1}^2 -\frac{1}{\sigma+1} \norm{u}_{L^{4\sigma+2}}^{2\sigma+1}\norm{u_x}_{L^2}\\
    &\geq \norm{u}_{H^1}^2 - c_\sigma \norm{u}_{H^1}^{2\sigma+2}\equiv
    f_{\sigma}\paren{ \norm{u}_{H^1}},
  \end{split}
\end{equation}
where $c_\sigma$ is a constant that depends only on $\sigma$.  For
$\sigma >0$, $f_{\sigma}$ has a local minimum at $0$ and a local
maximum at some $x_\star>0$.

To proceed, we assume that there exist mild solutions of the equation
such that:
\begin{itemize}
\item The mapping $t\mapsto \norm{u(t)}_{H^1}$ is continuous;
\item The solution conserves the invariants for the lifetime of the
  solution.
\end{itemize}

We call the above assumptions {\bf (A1)} and {\bf (A2)}, respectively.
Under these assumptions, the flow admits the following dichotomy:
\begin{lemma}
  \label{l:dichotomy}
  For $\sigma \geq 1$, such a flow admits the dichotomy that if
  $\mathcal{H} + \mathcal{M} < f_\sigma (x_\star),$ then:
  \begin{itemize}
  \item If $\norm{u_0}_{H^1}<x_\star$, then $\norm{u_0}_{H^1}<x_\star$
    for the lifetime of the solution.
  \item If $\norm{u_0}_{H^1}>x_\star$, then $\norm{u_0}_{H^1}>x_\star$
    for the lifetime of the solution.
  \end{itemize}
\end{lemma}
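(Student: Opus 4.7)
The plan is a straightforward barrier argument combining conservation of invariants with continuity of the $H^1$ norm in time. First, under assumption \textbf{(A2)}, the quantity $\mathcal{M}[u(t)] + \mathcal{H}[u(t)]$ is constant along the flow, equal to its initial value, which by hypothesis satisfies $\mathcal{M}[u_0] + \mathcal{H}[u_0] < f_\sigma(x_\star)$. Applying the estimate \eqref{e:invariant_bound} at an arbitrary time $t$ in the lifetime of the solution then yields
\begin{equation*}
  f_\sigma\paren{\norm{u(t)}_{H^1}} \leq \mathcal{M}[u(t)] + \mathcal{H}[u(t)] = \mathcal{M}[u_0] + \mathcal{H}[u_0] < f_\sigma(x_\star).
\end{equation*}

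Second, I would record the relevant shape information for $f_\sigma(x) = x^2 - c_\sigma x^{2\sigma+2}$. A direct computation gives $f_\sigma'(x) = 2x\paren{1 - c_\sigma(\sigma+1)x^{2\sigma}}$, so $f_\sigma$ is strictly increasing on $[0,x_\star]$ and strictly decreasing on $[x_\star,\infty)$, attaining its maximum on $[0,\infty)$ uniquely at $x_\star$. In particular, the strict inequality displayed above forces $\norm{u(t)}_{H^1} \neq x_\star$ for every $t$ in the lifetime of the solution.

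To close the argument I invoke assumption \textbf{(A1)}: since $t \mapsto \norm{u(t)}_{H^1}$ is a continuous real-valued function that never attains the value $x_\star$, the intermediate value theorem forbids it from transiting between the two open sets $\{x < x_\star\}$ and $\{x > x_\star\}$. Hence if $\norm{u_0}_{H^1} < x_\star$, then $\norm{u(t)}_{H^1} < x_\star$ for the lifetime of the solution, and analogously in the case $\norm{u_0}_{H^1} > x_\star$.

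No genuine obstacle arises here, since the hypotheses \textbf{(A1)} and \textbf{(A2)} have been tailored precisely to enable this trapping argument, and the one-variable analysis of $f_\sigma$ is elementary. The conceptual content is simply that \eqref{e:invariant_bound} produces a continuous scalar barrier function whose sub-barrier set $\set{x \geq 0 : f_\sigma(x) \leq \mathcal{M}+\mathcal{H}}$ disconnects at $x_\star$ once $\mathcal{M}+\mathcal{H} < f_\sigma(x_\star)$, and conservation keeps $\norm{u(t)}_{H^1}$ in whichever component the initial data lies in.
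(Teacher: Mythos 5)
Your proposal is correct and follows essentially the same route as the paper: conservation of $\mathcal{M}+\mathcal{H}$ plus \eqref{e:invariant_bound} gives $f_\sigma(\norm{u(t)}_{H^1}) < f_\sigma(x_\star)$, which forbids the norm from ever equalling $x_\star$, and continuity of $t\mapsto\norm{u(t)}_{H^1}$ then traps it on one side. The paper phrases the last step as a first-crossing-time argument rather than invoking the intermediate value theorem directly, but this is the same barrier idea, and your explicit verification that $x_\star$ is the unique global maximizer of $f_\sigma$ on $[0,\infty)$ is a welcome detail the paper leaves implicit.
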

\begin{proof}
  By the assumption and \eqref{e:invariant_bound}, for the lifetime of
  the solution,
  \begin{equation}
    \label{e:inv_bound2}
    f_\sigma(\norm{u(t)}_{H^1}) \leq 
    \mathcal{H} + \mathcal{M} < f_\sigma(x_\star)
  \end{equation}
  Now, assume $\norm{u_0}_{H^1}< x_\star$ and let
  \[
  \tau \equiv \inf\set{t\leq T\mid \norm{u(t)}_{H^1}\geq x_\star},
  \]
  taking $\tau = +\infty$ if the set is empty.  If $\tau < \infty$,
  then by the continuity of $\norm{u(t)}_{H^1}$, there exists $0<t_1 <
  \tau \leq T$ at which $\norm{u(t_1)}_{H^1} = x_\star$.  But this
  violates \eqref{e:inv_bound2}, so $\norm{u(t)}_{H^1}< x_\star$ for
  the lifetime of the solution.  We have the analogous result if
  $\norm{u_0}_{H^1}> x_\star$.
\end{proof}
Consequently,
\begin{theorem}
  Assume {\bf (A1)} and {\bf (A2)}, and assume
  $\mathcal{H}+\mathcal{M}<f_{\sigma}(x_{\star})$ and
  $\|u_{0}\|_{H^{1}}<x_{\star}.$ Furthermore, assume that there is a
  lower bound for the time of existence of solutions, uniform in the
  $H^1$ size of the data.  Then the solution exists for all time.
\end{theorem}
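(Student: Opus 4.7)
The plan is to argue by contradiction: suppose the maximal time of existence is $T^{\ast}<\infty$, and use Lemma \ref{l:dichotomy} together with the uniform lower bound on the lifespan to extend the solution past $T^{\ast}$.

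First, I would invoke Lemma \ref{l:dichotomy} directly. Since $\mathcal{H}+\mathcal{M}<f_{\sigma}(x_{\star})$ (and these are conserved by assumption \textbf{(A2)}) and $\|u_{0}\|_{H^{1}}<x_{\star}$, the lemma gives the \emph{a priori} bound $\|u(t)\|_{H^{1}}<x_{\star}$ for every $t$ in the lifetime $[0,T^{\ast})$ of the solution. This is the key output of the dichotomy: the $H^{1}$ norm cannot cross the barrier at $x_{\star}$, so the solution remains uniformly bounded in $H^{1}$ by $x_{\star}$ as long as it exists.

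Next, I would use the uniform lifespan hypothesis. By assumption, there exists $\delta=\delta(x_{\star})>0$ such that any initial datum $v_{0}\in H^{1}$ with $\|v_{0}\|_{H^{1}}\leq x_{\star}$ produces a solution existing on $[0,\delta]$. Pick any time $t_{0}\in [0,T^{\ast})$ with $T^{\ast}-t_{0}<\delta/2$; this is possible by the assumption $T^{\ast}<\infty$. Treating $u(\cdot,t_{0})$ as initial data, which is legitimate by the continuity hypothesis \textbf{(A1)} ensuring $u(\cdot,t_{0})\in H^{1}$ with norm less than $x_{\star}$, the uniform lower bound produces a solution on $[t_{0},t_{0}+\delta]$. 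By the uniqueness granted (at least for sufficiently regular solutions) this extends $u$ to a time strictly greater than $T^{\ast}$, contradicting the maximality of $T^{\ast}$. Therefore $T^{\ast}=\infty$ and the solution is global.

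The only potentially subtle point is making sure the continuation argument is consistent with whatever uniqueness or gluing property is available for the flow under assumptions \textbf{(A1)} and \textbf{(A2)}; since these assumptions are hypothesized rather than proved, one simply reads the uniform lifespan condition as giving a deterministic restart of the flow from $u(\cdot,t_{0})$. The main obstacle in principle would be ensuring that the restarted solution agrees with the original solution on the overlap $[t_{0},T^{\ast})$, but this is automatic if the class of solutions enjoys a gluing/uniqueness property, which is implicit in speaking of ``the solution'' in the statement. Once that is granted, the argument is a short contradiction based on Lemma \ref{l:dichotomy} and the uniform lifespan.
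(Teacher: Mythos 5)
Your proof is correct and is exactly the argument the paper intends: the paper states this theorem with no written proof (just ``Consequently,'' following Lemma \ref{l:dichotomy}), and your continuation-by-contradiction argument --- a priori bound $\|u(t)\|_{H^1}<x_\star$ from the dichotomy, then restart from $u(\cdot,t_0)$ near $T^\ast$ using the uniform lifespan to contradict maximality --- is the standard fleshing-out of that implication. Your remark that a gluing/uniqueness property is implicitly assumed in speaking of ``the solution'' is a fair and accurate observation about what the hypotheses \textbf{(A1)}--\textbf{(A2)} leave unstated.
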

We note that in the paper \cite{Tsutsumi:1980uy}, Tsutsumi and Fukuda
were able to conclude existence of global small solutions, but, as
noted in our introduction, their approach requires sufficiently smooth
nonlinearities.

The above results apply to our $H^2$ solutions in the following way.
Since our mild $H^2$ solutions are in $C(0,T;H^1)$ and conserve the
invariants, Lemma \ref{l:dichotomy} applies. Hence, with data in $H^2$
that is sufficiently small in $H^1,$ solutions remain uniformly
bounded in $H^1$ for the lifetime of the solution.  Moreover, since
our estimate of the time of existence depends on the $H^2$ size of the
data, we can infer that if the solution ceases to be global, it is
because $\norm{u_{xx}}_{L^2}$ is blowing up: there exists a finite $T_{\rm
  blowup} >0$ such that
\begin{equation}
  \limsup_{t\to T_{\rm blowup}^{-} }\norm{u_{xx}(t)}_{L^2} = \infty.
\end{equation}
It is our conjecture that no such blowup occurs, and that with a
refined analysis using the dispersive properties of the Schr\"odinger
semigroup, sufficiently small data in $H^2$ will yield solutions which
are global in time.

\clearpage

\bibliographystyle{abbrv} \bibliography{ASLWPDNLS}

\begin{thebibliography}{10}

\bibitem{ambroseThesis}
D.~Ambrose.
\newblock Well-posedness of vortex sheets with surface tension.
\newblock {\em SIAM Journal on Mathematical Analysis}, 35(1):211--244, 2003.

\bibitem{Colliander:2001ve}
J.~Colliander, M.~Keel, G.~Staffilani, H.~Takaoka, and T.~Tao.
\newblock {Global well-posedness for Schr{\"o}dinger equations with
  derivative}.
\newblock {\em SIAM Journal on Mathematical Analysis}, 33(3):649--669, 2001.

\bibitem{Colliander:2002ve}
J.~Colliander, M.~Keel, G.~Staffilani, H.~Takaoka, and T.~Tao.
\newblock {A refined global well-posedness result for Schr{\"o}dinger equations
  with derivative}.
\newblock {\em SIAM Journal on Mathematical Analysis}, 34(1):64--86, 2002.

\bibitem{constantinFoias}
P.~Constantin and C.~Foias.
\newblock {\em Navier-{S}tokes equations}.
\newblock Chicago Lectures in Mathematics. University of Chicago Press,
  Chicago, IL, 1988.

\bibitem{Grillakis:1987hj}
M.~Grillakis, J.~Shatah, and W.~Strauss.
\newblock {Stability theory of solitary waves in the presence of symmetry. I}.
\newblock {\em Journal Of Functional Analysis}, 74(1):160--197, 1987.

\bibitem{Grillakis:1990jv}
M.~Grillakis, J.~Shatah, and W.~Strauss.
\newblock {Stability theory of solitary waves in the presence of symmetry. II}.
\newblock {\em Journal Of Functional Analysis}, 94(2):308--348, 1990.

\bibitem{Grunrock:2008go}
A.~Gr{\"u}nrock and S.~Herr.
\newblock {Low Regularity Local Well-Posedness of the Derivative Nonlinear
  Schr{\"o}dinger Equation with Periodic Initial Data}.
\newblock {\em SIAM Journal on Mathematical Analysis}, 39(6):1890--1920, 2008.

\bibitem{Hao:2007ez}
C.~Hao.
\newblock {Well-posedness for one-dimensional derivative nonlinear
  Schr\"odinger equations}.
\newblock {\em Communications on Pure and Applied Analysis}, 6(4):997--1021,
  2007.

\bibitem{Hayashi:1993vj}
N.~Hayashi.
\newblock {The initial value problem for the derivative nonlinear
  Schr{\"o}dinger equation in the energy space}.
\newblock {\em Nonlinear Analysis. Theory, Methods {\&} Applications. An
  International Multidisciplinary Journal. Series A: Theory and Methods},
  20(7):823--833, 1993.

\bibitem{Hayashi:1992wl}
N.~Hayashi and T.~Ozawa.
\newblock {On the derivative nonlinear Schr{\"o}dinger equation}.
\newblock {\em Physica D: Nonlinear Phenomena}, 55(1-2):14--36, 1992.

\bibitem{Hayashi:1994un}
N.~Hayashi and T.~Ozawa.
\newblock {Finite energy solutions of nonlinear Schr{\"o}dinger equations of
  derivative type}.
\newblock {\em SIAM Journal on Mathematical Analysis}, 25(6):1488--1503, 1994.

\bibitem{Hayashi:1994vp}
N.~Hayashi and T.~Ozawa.
\newblock {Remarks on nonlinear Schr{\"o}dinger equations in one space
  dimension}.
\newblock {\em Differential and Integral Equations. An International Journal
  for Theory and Applications}, 7(2):453--461, 1994.

\bibitem{Herr:2006ev}
S.~Herr.
\newblock {On the Cauchy problem for the derivative nonlinear Schr\"odinger
  equation with periodic boundary condition}.
\newblock {\em International Mathematics Research Notices}, pages Art. ID
  96763--33, 2006.

\bibitem{Kenig:1993wr}
C.~E. Kenig, G.~Ponce, and L.~Vega.
\newblock {Small solutions to nonlinear Schr{\"o}dinger equations}.
\newblock {\em Annales de l'Institut Henri Poincar{\'e}. Analyse Non
  Lin{\'e}aire}, 10(3):255--288, 1993.

\bibitem{Kenig:1998wr}
C.~E. Kenig, G.~Ponce, and L.~Vega.
\newblock {Smoothing effects and local existence theory for the generalized
  nonlinear Schr{\"o}dinger equations}.
\newblock {\em Inventiones Mathematicae}, 134(3):489--545, 1998.

\bibitem{Liu:2013ej}
X.~Liu, G.~Simpson, and C.~Sulem.
\newblock {Focusing singularity in a derivative nonlinear Schr{\"o}dinger
  equation}.
\newblock {\em Physica D: Nonlinear Phenomena}, 262:48--58, Nov. 2013.

\bibitem{Liu:2013cq}
X.~Liu, G.~Simpson, and C.~Sulem.
\newblock {Stability of Solitary Waves for a Generalized Derivative Nonlinear
  Schr{\"o}dinger Equation}.
\newblock {\em Journal of Nonlinear Science}, 23(4):557--583, Jan. 2013.

\bibitem{Moses:2007vv}
J.~Moses, B.~Malomed, and F.~Wise.
\newblock {Self-steepening of ultrashort optical pulses without
  self-phase-modulation}.
\newblock {\em Physical Review A - Atomic, Molecular, and Optical Physics},
  76(2), 2007.

\bibitem{Ozawa:1996uj}
T.~Ozawa.
\newblock {On the nonlinear Schr{\"o}dinger equations of derivative type}.
\newblock {\em Indiana University Mathematics Journal}, 45(1):137--163, 1996.

\bibitem{Sulem:1999kx}
C.~Sulem and P.-L. Sulem.
\newblock {\em {The Nonlinear Schr{\"o}dinger Equation: Self-Focusing and Wave
  Collapse}}.
\newblock Springer, 1999.

\bibitem{Tsutsumi:1980uy}
M.~Tsutsumi and I.~Fukuda.
\newblock {On solutions of the derivative nonlinear Schr{\"o}dinger equation.
  Existence and uniqueness theorem}.
\newblock {\em Fako de l'Funkcialaj Ekvacioj Japana Matematika Societo.
  Funkcialaj Ekvaciog. Serio Internacia}, 23(3):259--277, 1980.

\end{thebibliography}

\end{document}